\documentclass[12pt]{article}

\usepackage{amsmath,amsthm,amsfonts,amssymb,amscd,cite}
\usepackage{graphicx}

\allowdisplaybreaks[4]

\newtheorem {lemma}{Lemma}[section]
\newtheorem {theorem} {Theorem}[section]
\newtheorem {conjecture}{Conjecture}[section]
\newtheorem {corollary}{Corollary}[section]

\usepackage[figurename=Fig.]{caption}
\allowdisplaybreaks [4]

\usepackage{geometry} \geometry{left=2.6cm,right=2.5cm,top=2.5cm,bottom=2.6cm}

\usepackage{tikz}
\usetikzlibrary{positioning}
\usepackage{xcolor}
\usepackage{circuitikz}
\usetikzlibrary{patterns}
\usetikzlibrary{shapes.geometric}

\begin{document}

\title{\bf Diameter vs Laplacian eigenvalue distribution}

\author{Leyou Xu\footnote{Email: leyouxu@m.scnu.edu.cn}, Bo Zhou\footnote{Corresponding author; Email: zhoubo@m.scnu.edu.cn}\\
School of Mathematical Sciences, South China Normal University\\
Guangzhou 510631, P.R. China}

\date{}
\maketitle

\begin{abstract}
Let $G$ be a simple graph of order $n$.
It is known that any Laplacian eigenvalue of $G$ belongs to the interval $[0,n]$.
For an interval $I\subseteq [0, n]$, denote by $m_GI$ the number of Laplacian eigenvalues of $G$ in $I$, counted with multiplicities.
Let $G$ be a graph of order $n$ with diameter $d$.
In this paper, we show that
$m_G[n-d,n]\le n-d+2$ if $2\le d\le n-4$, and
 $m_G[n-2d+4,n]\le n-3$ if $3\le d\le \lfloor\frac{n+1}{2}\rfloor$. The diameter constraint provides an insightful
approach to understand how the Laplacian eigenvalues are distributed.
 \\ \\
{\bf  Keywords:} diameter, Laplacian spectrum,  permutational similar\\ \\
{\bf MSC(2020):} 05C12, 05C50, 15A18\\ \\
\end{abstract}


\section{Introduction}

We consider simple graphs.
%
Let $G$ be a graph with vertex set $\{v_1,\dots, v_n\}$.  The adjacency matrix $A(G)$ of $G$ is the $n\times n$ matrix where the $(i,j)$-entry is equal to  $1$ if $v_i$ and $v_j$ are adjacent, and is otherwise equal to $0$.
 Moreover, if $D(G)$ is the $n\times n$ diagonal matrix whose $(i,i)$-entry is the degree of vertex $v_i$ for $i=1,\dots, n$,  then $L(G)=D(G)-A(G)$ is called the Laplacian matrix  of $G$. This  is a symmetric positive semidefinite matrix and hence has $n$ real
nonnegative eigenvalues, which are said to be the Laplacian eigenvalues of $G$ and  can be arranged as
\[
\mu_n(G)\le \dots\le \mu_1(G),
\]
counted with multiplicities.
One can see that $\mu_n(G)=0$, and $\mu_j(G)$ is the $j$th (largest) Laplacian eigenvalue  of $G$ for $j=1,\dots, n$.

For  a graph $G$ of order $n$, any Laplacian eigenvalue of $G$ lies in the interval $[0,n]$ \cite{Me,Moh, Mo}, and the multiplicity of the Laplacian eigenvalue $0$ is equal to the number of the (connected) components of $G$ \cite{F}.
The distribution of Laplacian eigenvalues of graphs 
is relevant to the many applications related to Laplacian matrices \cite{GM,Me,Mo}
and there are results on the Laplacian eigenvalues in subintervals of $[0, n]$, see, e.g., \cite{AhMS,BRT,CMP,CJT,COP,GMS,GWZF,GT,HJT,JOT,Me2,Sin,ZZD}.
However, it is not well understood how the Laplacian eigenvalues are distributed in $[0, n]$, see \cite{JOT}.

The diameter of a connected graph $G$ is defined as the maximum distance
over all pairs of vertices in $G$.
Recently, progress is made on the connections between the distribution of the Laplacian eienvalues and the diameter. For an interval $I\subseteq [0,n]$, denote by $m_GI$ the number of Laplacian eigenvalues of a graph $G$ of order $n$ in $I$, counted with multiplicities.
The first result (Theorem \ref{B1}) was achieved by Ahanjideh,  Akbari, Fakharan and  Trevisan \cite{AhMS},  the second result (Theorem \ref{x1}) was conjectured in \cite{AhMS} and confirmed in \cite{XZ1}, and the third one (Theorem \ref{x2}) came from \cite{XZ}.

\begin{theorem}\cite{AhMS} \label{B1}
Let $G$ be a connected graph of order $n$ with diameter $d$, where $d\ge 4$. Then $m_G(n-d+3,n]\le n-d-1$.
\end{theorem}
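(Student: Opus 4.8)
The plan is to produce $d+1$ Laplacian eigenvalues of $G$ that are all at most $n-d+3$. Since $L(G)$ has exactly $n$ eigenvalues, this forces at most $n-(d+1)=n-d-1$ of them to lie strictly above $n-d+3$, which is precisely $m_G(n-d+3,n]\le n-d-1$. To manufacture these small eigenvalues I will combine the structure of a diametral path with Cauchy interlacing for principal submatrices of $L(G)$.

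First I would fix a geodesic $P:u_0,u_1,\dots,u_d$ realizing the diameter and put $S=\{u_0,\dots,u_d\}$. Because $P$ is a shortest path, $d(u_i,u_j)=|i-j|$, so $u_i\sim u_j$ exactly when $|i-j|=1$; hence the induced subgraph $G[S]$ is the path $P_{d+1}$. Let $B$ be the principal submatrix of $L(G)$ indexed by $S$. Then $B=\mathrm{diag}\bigl(\deg_G(u_0),\dots,\deg_G(u_d)\bigr)-A(P_{d+1})$, a $(d+1)\times(d+1)$ symmetric matrix whose diagonal records the degrees of the geodesic vertices in the whole graph and whose off-diagonal part is the adjacency matrix of a path.

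The crucial step, and the part I expect to carry the whole argument, is to show that every eigenvalue of $B$ is at most $n-d+3$. This rests on a degree estimate forced by the geodesic: an interior vertex $u_i$ with $1\le i\le d-1$ is non-adjacent to the $d-2$ path vertices $u_j$ with $|i-j|\ge 2$, so $\deg_G(u_i)\le (n-1)-(d-2)=n-d+1$, while each endpoint satisfies $\deg_G(u_0),\deg_G(u_d)\le n-d$. In $B$ the $i$-th Gershgorin disc is centred at $\deg_G(u_i)$ with radius $\deg_{P_{d+1}}(u_i)\le 2$, so its right endpoint is at most $(n-d+1)+2=n-d+3$ for interior rows and at most $(n-d)+1$ for endpoint rows. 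Consequently every eigenvalue of $B$ lies in $(-\infty,\,n-d+3]$. The delicate point here is the counting that the geodesic forbids enough edges incident to $S$ to pin all the discs below $n-d+3$; once that degree bound is secured, the Gershgorin estimate is immediate.

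Finally I would apply Cauchy interlacing. Since $B$ is a principal submatrix of $L(G)$ of order $d+1$, its eigenvalues $\theta_1\ge\cdots\ge\theta_{d+1}$ obey $\mu_{k+n-d-1}(G)\le\theta_k$ for $k=1,\dots,d+1$. As $\theta_k\le n-d+3$ for every $k$, the $d+1$ eigenvalues $\mu_{n-d}(G),\mu_{n-d+1}(G),\dots,\mu_n(G)$ are all at most $n-d+3$, giving the claimed bound. I note that the hypothesis $d\ge 4$ is exactly what makes $n-d+3<n$, so that $(n-d+3,n]$ is a genuine subinterval and the conclusion is non-vacuous; the argument itself is uniform in $d$.
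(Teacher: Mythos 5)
Your argument is correct. Note that the paper does not prove Theorem \ref{B1} at all --- it is quoted from \cite{AhMS} --- so there is no in-paper proof to compare against; but your route is essentially the standard one used throughout this paper (e.g.\ in the proof of Theorem \ref{normal}): take the $(d+1)\times(d+1)$ principal submatrix $B$ of $L(G)$ on a diametral path, bound $\rho_1(B)$, and interlace. The only difference is cosmetic: you bound $\rho_1(B)\le n-d+3$ by Gershgorin applied to $B=\mathrm{diag}(\deg_G(u_i))-A(P_{d+1})$ together with the geodesic degree bound $\deg_G(u_i)\le n-d+1$, whereas the paper's technique writes $B=L(P_{d+1})+M$ with $M$ diagonal, $\rho_1(M)\le n-d-1$, and invokes $\mu_1(P_{d+1})<4$ via Weyl (Lemmas \ref{cw} and \ref{Hpq}(i)), which yields the strict inequality $\rho_1(B)<n-d+3$; your weak inequality suffices here because the interval $(n-d+3,n]$ is open at its left endpoint.
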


\begin{theorem}\label{x1}\cite{AhMS,XZ1}
Let $G$ be a connected graph of order $n$ with diameter $d$, where $2\le d\le n-2$. Then  $m_G[n-d+2,n]\le n-d$.
\end{theorem}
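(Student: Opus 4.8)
The plan is to prove the equivalent statement $\mu_{n-d+1}(G)<n-d+2$, since this forces all Laplacian eigenvalues reaching $[n-d+2,n]$ to lie among $\mu_1(G),\dots,\mu_{n-d}(G)$, giving $m_G[n-d+2,n]\le n-d$. Fix a diametral path $P\colon v_0v_1\cdots v_d$. Because $P$ is a geodesic, the subgraph induced on $\{v_0,\dots,v_d\}$ has no chords and is exactly $P_{d+1}$; moreover any common neighbour of $v_i$ and $v_j$ would give $d_G(v_i,v_j)\le 2$, so each vertex of $G$ is adjacent to at most three consecutive path vertices (the ``window'' property), and path vertices whose indices differ by at least $3$ have disjoint neighbourhoods. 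Let $M$ be the $(d+1)\times(d+1)$ principal submatrix of $L(G)$ indexed by $v_0,\dots,v_d$. By Cauchy interlacing $\lambda_2(M)\ge\mu_{n-d+1}(G)$, where $\lambda_2(M)$ is the second largest eigenvalue of $M$, so it suffices to show $\lambda_2(M)<n-d+2$.

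The key reduction is that, since the induced subgraph is $P_{d+1}$, the off-diagonal part of $M$ is $-A(P_{d+1})$, whence $M=L(P_{d+1})+\operatorname{diag}(b_0,\dots,b_d)$, where $b_i$ counts the neighbours of $v_i$ lying off the path. Each $b_i$ satisfies $0\le b_i\le n-d-1$; the window property gives $\sum_i b_i\le 3(n-d-1)$; and disjointness of far neighbourhoods gives $\sum_{i\in I}b_i\le n-d-1$ whenever the indices in $I$ are pairwise at distance at least $3$. As every Laplacian eigenvalue of $P_{d+1}$ is less than $4$, Weyl's inequality $\lambda_2(M)\le\mu_1(P_{d+1})+b_{(2)}$ (with $b_{(2)}$ the second largest $b_i$) already yields $\lambda_2(M)<4+(n-d-1)=n-d+3$. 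This only recovers the weaker Theorem \ref{B1}-type bound, and the whole difficulty of Theorem \ref{x1} is concentrated in removing this last ``$+1$''.

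To reach the sharp bound $\lambda_2(M)<n-d+2$ I would argue by the location of the two dominant entries $b_{(1)},b_{(2)}$. If the corresponding path vertices are at index-distance at least $3$, disjointness gives $b_{(1)}+b_{(2)}\le n-d-1$, hence $b_{(2)}\le(n-d-1)/2$, and the Weyl bound comfortably beats $n-d+2$ except for small $d$; the remaining small-$d$ range (including $d=2,3$) can be checked directly. If instead the two dominant vertices lie within index-distance $2$, then a large common off-path neighbourhood makes one of them (or their union) nearly dominating, forcing $d$ small and $n-d+2$ large, again a favourable regime. The main obstacle is to fuse these two heuristics into one uniform argument valid across the whole range $2\le d\le n-2$, and in particular to control $\lambda_2(M)$ sharply when $d$ is close to $n$, where $n-d+2$ is as small as $4$: there both $L(P_{d+1})$ and the diagonal perturbation contribute near the threshold, the crude splitting above is too lossy, and one must exploit the detailed interaction between $L(P_{d+1})$ and the positions of the nonzero $b_i$ rather than bounding the two summands separately.
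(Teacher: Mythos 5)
Your framework --- interlacing down to the diametral path, writing the principal submatrix as $L(P_{d+1})+\operatorname{diag}(b_0,\dots,b_d)$, and invoking Weyl's inequality --- is the standard opening move in this line of work (it is essentially how the present paper proceeds in the proof of Theorem \ref{normal}), and your structural observations (the window property, disjointness of the off-path neighbourhoods of far-apart path vertices) are correct. But the proposal stops exactly where the theorem begins: the passage from the easy bound $\lambda_2(M)<n-d+3$ to the sharp $\lambda_2(M)<n-d+2$ is not carried out, and the two heuristics offered in its place do not survive scrutiny. In your first case, $4+(n-d-1)/2<n-d+2$ holds precisely when $n-d>3$, so the range left uncovered is $d\in\{n-3,n-2\}$, i.e.\ $d$ close to $n$ --- the opposite of the ``small-$d$ range including $d=2,3$'' that you propose to check directly (and you contradict this yourself at the end of the paragraph). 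In your second case, the claim that two heavily loaded path vertices at index-distance at most $2$ force $d$ to be small is false: the graph $G_{n,d,t}$ defined before Lemma \ref{gndt} has three consecutive path vertices each adjacent to all $n-d-1$ off-path vertices for every $2\le t\le d$ and every $2\le d\le n-2$, and by Lemma \ref{gndt} it satisfies $\mu_{n-d}(G_{n,d,t})=n-d+2$; these are exactly the tight configurations your argument must handle and cannot dismiss by saying the regime is ``favourable.''

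Note also that this theorem is not proved in the present paper: it is quoted from \cite{AhMS} (where it was conjectured) and \cite{XZ1} (where it was proved). The machinery the paper imports from that proof --- Lemma \ref{z+}, which isolates the extremal graphs $G_{n,d,t}$ and $G_{n,d,r,a}$; the equality case of Weyl's inequality (Lemma \ref{cw}); and the eigenvector propagation along pendant paths (Lemma \ref{cha}) --- indicates what is actually required. Near the threshold both summands are tight simultaneously, so one cannot bound $\mu_1(P_{d+1})$ and the diagonal perturbation separately; instead one must run the equality analysis, showing that a putative eigenvector realizing the borderline value would have to be a common eigenvector of both summands and then forcing it to vanish along the pendant tail of the path. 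Your closing sentence correctly identifies this as the missing ingredient, but identifying the obstacle is not the same as overcoming it, so what you have is a plan rather than a proof.
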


\begin{theorem}\label{x2}\cite{XZ}
Let $G$ be a connected graph of order $n$ with diameter $d$, where $1\le d\le n-3$. Then $m_G[n-d+1,n]\le n-d+1$.
\end{theorem}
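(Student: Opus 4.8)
The plan is to prove the equivalent counting statement that \emph{at least $d-1$ Laplacian eigenvalues of $G$ lie strictly below $n-d+1$}, since then
$m_G[n-d+1,n]=n-m_G[0,n-d+1)\le n-(d-1)=n-d+1$.
The case $d=1$ is trivial (the assertion reads $m_G[n,n]\le n$), so assume $d\ge 2$. The whole argument is driven by the distance layering forced by the diameter. Fix vertices $u,w$ with $\mathrm{dist}(u,w)=d$ and set $V_i=\{v:\mathrm{dist}(u,v)=i\}$ for $i=0,\dots,d$. Since a shortest $u$--$w$ path meets every layer, each $V_i$ is nonempty, $|V_0|=1$, and, crucially, there are no edges between $V_i$ and $V_j$ whenever $|i-j|\ge 2$. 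Hence $L(G)$ is block tridiagonal with respect to the partition $V_0,\dots,V_d$.

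First I would pass to the quotient. Let $S\in\mathbb{R}^{n\times(d+1)}$ have orthonormal columns equal to the normalized indicators $|V_i|^{-1/2}\mathbf 1_{V_i}$, and put $\hat Q=S^{\top}L(G)S$. Because $L(G)$ is block tridiagonal, $\hat Q$ is a symmetric tridiagonal matrix of order $d+1$ with $\hat Q_{ii}=(e_{i-1}+e_i)/|V_i|$ and $\hat Q_{i,i+1}=-e_i/\sqrt{|V_i||V_{i+1}|}$, where $e_i$ is the number of edges between $V_i$ and $V_{i+1}$ (with $e_{-1}=e_d=0$). By the Poincar\'e separation theorem, $\mu_{i+n-d-1}(G)\le\mu_i(\hat Q)$ for $i=1,\dots,d+1$; taking $i=3$ gives $\mu_{n-d+2}(G)\le\mu_3(\hat Q)$. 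Thus it suffices to show $\mu_3(\hat Q)<n-d+1$, i.e.\ that the tridiagonal quotient has at most two eigenvalues that are $\ge n-d+1$ (for $d=2$ this is immediate, as $\hat Q$ then has order $3$ and $\mu_3(\hat Q)=0$).

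To analyze this I would diagonally rescale. With $D_V=\mathrm{diag}(|V_0|,\dots,|V_d|)$ and the weighted path Laplacian $\tilde L$ given by $\tilde L_{ii}=e_{i-1}+e_i$, $\tilde L_{i,i+1}=-e_i$, one has $\hat Q=D_V^{-1/2}\tilde L D_V^{-1/2}$, so by Sylvester's law of inertia the number of eigenvalues of $\hat Q$ exceeding $n-d+1$ equals the number of positive eigenvalues of $M=\tilde L-(n-d+1)D_V$. The key structural inputs are $e_i\le|V_i||V_{i+1}|$ together with $|V_0|=1$ and $\sum_i|V_i|=n$: for interior $i$ the $d-2$ layers outside $\{V_{i-1},V_i,V_{i+1}\}$ each have size at least $1$, so $|V_{i-1}|+|V_{i+1}|\le n-d+1$, whence $M_{ii}=e_{i-1}+e_i-(n-d+1)|V_i|\le|V_i|\bigl(|V_{i-1}|+|V_{i+1}|-(n-d+1)\bigr)\le 0$; a direct check gives $M_{00},M_{dd}<0$ as well. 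So every diagonal entry of $M$ is at most $0$.

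The hard part will be the final step, and it is the technical heart of the proof: nonpositivity of the diagonal by itself does \emph{not} bound the number of positive eigenvalues of a tridiagonal matrix (the adjacency matrix of a path is a counterexample), so the off-diagonal sizes $|M_{i,i+1}|=e_i\le|V_i||V_{i+1}|$ must be used quantitatively. The plan here is a Sturm-sequence argument: track the signs of the leading principal minors $D_0=1$, $D_k=M_{kk}D_{k-1}-e_{k-1}^2D_{k-2}$, and show that, under the constraints above, the sequence $D_0,\dots,D_{d+1}$ has at most two sign agreements, which by the Jacobi--Sturm count caps the number of positive eigenvalues of $M$ at two and finishes the proof. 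I expect controlling this minor recursion to be delicate, in particular managing the interaction between a possibly large layer and its neighbours and treating the two end layers $V_0,V_d$ separately; it may be cleanest to isolate a few extreme layer-size configurations (say, when one layer dwarfs the others) and dispatch them by a direct estimate rather than the generic recursion.
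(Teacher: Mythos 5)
This statement is quoted by the paper from reference [XZ] and is not proved in the present manuscript, so there is no internal proof to compare against; the new theorems here are instead proved by passing to the induced subgraph on a diametral path plus a few outside vertices and combining interlacing (Lemma \ref{interlacing}), Weyl's inequality (Lemma \ref{cw}) and a case analysis of the neighbourhoods $\Gamma_{G,P}(z)$ — a route quite different from your BFS-quotient scheme. Your reductions up to the final step are correct: the layers $V_0,\dots,V_d$ are nonempty with no edges between non-consecutive layers, the quotient $\hat Q=S^{\top}L(G)S$ is the stated tridiagonal matrix, Poincar\'e separation gives $\mu_{n-d+2}(G)\le\mu_3(\hat Q)$, Sylvester's law reduces the problem to the inertia of $M=\tilde L-(n-d+1)D_V$, and the diagonal estimates (using $e_i\le|V_i||V_{i+1}|$, $|V_0|=1$, and $|V_{i-1}|+|V_{i+1}|\le n-d+2-|V_i|$ for interior $i$) do give $M_{ii}\le0$ with strict negativity at the ends. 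A useful observation you could add: since enlarging the $e_i$ only increases $\tilde L$ in the Loewner order, it suffices to treat $e_i=|V_i||V_{i+1}|$, whence $M$ is congruent to the tridiagonal matrix with diagonal $(|V_{i-1}|+|V_{i+1}|-(n-d+1))/|V_i|$ and off-diagonal $-1$.

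The genuine gap is that the entire content of the theorem now sits in the unproved claim that $M$ has at most two nonnegative eigenvalues, and you have only announced a Sturm-sequence strategy for it while conceding it is ``delicate.'' Two concrete problems. First, the claim has \emph{no slack}: for example, with $n=d+3$, layer sizes $(1,1,2,2,1,1)$ and all $e_i$ maximal, $M$ is singular with exactly one positive and one zero eigenvalue, i.e.\ $\mu_2(\hat Q)=n-d+1$ exactly; with sizes $(1,1,1,3,1,1)$ one gets exactly two positive eigenvalues. So the minor recursion must be controlled essentially exactly, with degenerate sign patterns (zero leading minors) occurring at the extremal configurations, and no hand-waving survives this. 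Second, even if you cap the number of \emph{positive} eigenvalues of $M$ at two, that is not sufficient: you need $\mu_3(\hat Q)<n-d+1$ strictly, i.e.\ at most two \emph{nonnegative} eigenvalues of $M$ (positive plus zero), since a configuration with two positive eigenvalues and a third eigenvalue equal to $0$ would only yield $\mu_{n-d+2}(G)\le n-d+1$, which does not bound $m_G[n-d+1,n]$. Until the inertia bound is actually established — including the boundary case — the argument does not prove the theorem.
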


%
%
%

The first result of this paper is as follows.

\begin{theorem}\label{x}
Let $G$ be a connected graph of order $n$ with diameter $d$, where $2\le d\le n-4$.

(i) If $d=2,3,4$, then $m_G[n-d,n]\le n-d+1$.

(ii)  If $d\ge 5$, then $m_G[n-d,n]\le n-d+2$.
\end{theorem}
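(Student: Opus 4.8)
\noindent\emph{Proof proposal.} The plan is to recast the counting problem as an inertia computation and to drive it with a geodesic of length $d$. Writing $L=L(G)$ and $M=L-(n-d)I$, the Laplacian eigenvalues of $G$ lying in $[n-d,n]$ are exactly the nonnegative eigenvalues of $M$, so by Sylvester's law of inertia
\[
m_G[n-d,n]=n_{+}(M)+n_{0}(M)=n_{+}(P^{\top}MP)+n_{0}(P^{\top}MP)
\]
for every nonsingular $P$; the goal is then to exhibit a congruence $P$ after which the number of nonnegative eigenvalues is transparent. As a consistency check and alternative lens I would keep the complementary identity $m_G[n-d,n]=m_{\overline G}[0,d]-1$ (coming from $\mu_i(\overline G)=n-\mu_{n-i}(G)$) in reserve: the target $m_G[n-d,n]\le n-d+2$ is equivalent to the statement that $\overline G$ has at least $d-3$ Laplacian eigenvalues exceeding $d$, and the sharper small-diameter bound corresponds to one additional such eigenvalue.

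A cleaner organization, which also uses the results already available, is to peel off the bulk of the interval with Theorem \ref{x1}. Since $m_G[n-d,n]=m_G[n-d,n-d+2)+m_G[n-d+2,n]$ and $m_G[n-d+2,n]\le n-d$ for $2\le d\le n-4$, it suffices to establish the window bound $m_G[n-d,n-d+2)\le 2$ in general and $m_G[n-d,n-d+2)\le 1$ when $d\in\{3,4\}$ (the case $d=2$ being immediate, as $G$ has at most $n-1$ nonzero Laplacian eigenvalues and $0\notin[n-2,n]$). In other words, after accounting for the $n-d$ eigenvalues controlled by Theorem \ref{x1}, at most two (respectively one) further eigenvalues may fall in the length-two window $[n-d,n-d+2)$. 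I expect this window bound to be the true combinatorial heart of the theorem.

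To prove the window bound (and, if that reduction turns out to be lossy, the full interval directly) I would fix a diametral path $v_0v_1\cdots v_d$ and order $V(G)$ with these $d+1$ vertices first. The geodesic property forces $v_iv_{i+1}\in E(G)$ and $v_iv_j\notin E(G)$ for $|i-j|\ge2$, so the principal block of $M$ on $\{v_0,\dots,v_d\}$ is tridiagonal with off-diagonal entries $-1$ and diagonal entries $\deg_G(v_i)-(n-d)$, while the couplings of these vertices to the remaining $n-d-1$ vertices occupy only prescribed positions. Using the $-1$ entries $M_{v_i,v_{i+1}}$ as pivots, I would perform symmetric elimination along the path: a $2\times2$ block $\left(\begin{smallmatrix}a&-1\\-1&b\end{smallmatrix}\right)$ with $ab<1$ is indefinite and contributes exactly one nonnegative eigenvalue, so most interior path vertices pair off into indefinite blocks, each adding a single nonnegative eigenvalue, while the Schur complement that survives is supported essentially on the $n-d-1$ off-path vertices. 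Counting the few nonnegative eigenvalues left over from the path pivots against a residual block of order about $n-d$ is what should produce the surplus $+2$; for $d\in\{3,4\}$ the path is short enough that this surplus drops by one, giving $n-d+1$.

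The hard part will be the bookkeeping of the elimination. First, one must guarantee that the chosen pivots are nonzero, or repair this by a symmetric permutation or a diagonal-perturbation argument. More seriously, one must control the fill-in: a path vertex may have many neighbours off the path, so eliminating it injects a rank-one update into the residual block, and the whole delicacy of the $d\ge5$ versus $d\le4$ dichotomy — and of the precise constant $+2$ rather than a larger surplus — lives in showing that these rank-one updates cannot create more than the allotted number of nonnegative eigenvalues in the residual Schur complement. I would therefore isolate this as a standalone lemma bounding the nonnegative inertia of the residual matrix in terms of the distance-layer structure $V_0,\dots,V_d$, and I would settle the boundary diameters $d\in\{3,4\}$ and the extreme value $d=n-4$ by direct computation to pin down the constants.
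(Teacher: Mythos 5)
There is a genuine gap, and it sits exactly where you place the weight of the argument. Your central reduction --- peel off $m_G[n-d+2,n]\le n-d$ via Theorem \ref{x1} and then prove the ``window bound'' $m_G[n-d,n-d+2)\le 2$ (resp.\ $\le 1$ for $d=3,4$) --- targets a statement that is false. Take the graph obtained from $G_{n,d,t}$ by deleting a perfect matching from the clique $K_{n-d-1}$ (with $n-d-1$ even and $n\ge d+7$): the diameter is still $d$, each former clique vertex $w$ has degree $(n-d-3)+3=n-d$, and for each deleted matching edge $ww'$ the vertices $w,w'$ are nonadjacent with $N(w)=N(w')$, so $\mathbf{e}_w-\mathbf{e}_{w'}$ is a Laplacian eigenvector with eigenvalue $n-d$. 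This produces the eigenvalue $n-d$ with multiplicity at least $(n-d-1)/2\ge 3$, all lying in $[n-d,n-d+2)$; concretely, for $n=12$, $d=5$ (path $u_1\dots u_6$ with an octahedron $K_{2,2,2}$ joined to $u_2,u_3,u_4$) one gets $m_G[7,9)\ge 3$. The same construction with $d=4$ kills the claimed bound $\le 1$. So the quantity you call ``the true combinatorial heart'' cannot be bounded by $2$; the theorem is only saved because in such examples $m_G[n-d+2,n]$ falls strictly below $n-d$, i.e.\ the two summands in your splitting trade off against each other, and any proof must track that interaction rather than bound the window in isolation.

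Your fallback --- a direct inertia/Schur-complement elimination of $M=L(G)-(n-d)I$ along the diametral path --- is a legitimate framework (the identity $m_G[n-d,n]=n_+(M)+n_0(M)$ and the complement identity $m_G[n-d,n]=m_{\overline G}[0,d]-1$ are both correct), but as written it is a plan, not a proof: the pivoting, the control of fill-in from path vertices with many off-path neighbours, and above all the reason the surplus is exactly $+2$ (and $+1$ for $d\le 4$) are precisely the points you defer to an unproved ``standalone lemma.'' Note also that a $2\times 2$ block with both diagonal entries near $2-(n-d)<0$ has $ab>1$ and is negative definite, so the heuristic that interior path vertices ``each add a single nonnegative eigenvalue'' points the wrong way. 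For comparison, the paper proves $d=3,4$ through the complement using $\mu_2(\overline G)\ge\delta_2(\overline G)$ together with the equality characterization, and for $d\ge 5$ it passes to the induced subgraph $G'$ on $V(P)$ plus three outside vertices, combines interlacing with Weyl's inequality to reduce everything to the single fixed-size inequality $\mu_7(G')<4$, and then disposes of that by a case analysis on how the three outside vertices attach to $P$, using the reference graphs $H_{n,p,q}$, $G_{n,d,t}$, $G_{7,3,2,1}$ and an eigenvector argument (Lemma \ref{cha}) to exclude equality. Some such localization to a bounded-size subgraph, or an equally sharp global mechanism, is what your proposal still needs.
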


We  remark that the diameter condition in Theorem \ref{x} is tight.  It is known \cite{AM} that
$\mu_j(P_n)=4\sin^2\frac{(n-j)\pi}{2n}$ for $j=1,\dots, n$. Thus we have:

(i) If $d=n-1$, then $G$ is a path $P_n$, and $\mu_j(P_n)\ge 1$ if and only if $j=1,\dots,
\lfloor\frac{2}{3}n\rfloor$, so $m_{G}[1,n]=\lfloor\frac{2}{3}n\rfloor$.

(ii)  If $d=n-2$, then $P_{n-1}$ is a subgraph of $G$, and $\mu_j(P_{n-1})\ge 2$ if and only if $j=1,\dots,
\lfloor\frac{1}{2}(n-1)\rfloor$, so we have by Lemma \ref{cauchy} that
$\mu_{\lfloor\frac{1}{2}(n-1)\rfloor}(G)\ge \mu_{\lfloor\frac{1}{2}(n-1)\rfloor}(P_{n-1})\ge 2$, implying $m_G[2,n]\ge \lfloor\frac{1}{2}(n-1)\rfloor$.

(iii) If $d=n-3$, then  $P_{n-2}$ is a subgraph of $G$, and
 $\mu_j(P_{n-2})\ge 3$ if and only if $j=1,\dots,
\lfloor\frac{1}{3}(n-2)\rfloor$, so we have
by Lemma \ref{cauchy} that  $m_G[3,n]\ge \lfloor\frac{1}{3}(n-2)\rfloor$.


Next, we give the second result.

\begin{theorem}\label{d-2}
Let $G$ be a connected graph of order $n$ with diameter $d$.

(i) If $d=2, \frac{n+2}{2},\frac{n+3}{2}$, then $m_G[n-2d+4,n]\le n-2$.

(ii) If $3\le d\le \lfloor\frac{n+1}{2}\rfloor$, then $m_G[n-2d+4,n]\le n-3$.
\end{theorem}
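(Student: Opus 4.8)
The plan is to pass to the complement. Write $\overline{G}$ for the complement and use $L(G)+L(\overline{G})=nI-J$, which gives $\mu_i(\overline{G})=n-\mu_{n-i}(G)$ for $i=1,\dots,n-1$ and $\mu_n(\overline{G})=0$. Since $d\le\lfloor\frac{n+1}{2}\rfloor$ forces $2d-4\ge 0$, a direct count of eigenvalues yields
\[
m_G[n-2d+4,n]=m_{\overline{G}}[0,2d-4]-1 ,
\]
so that $m_G[n-2d+4,n]\le n-3$ is equivalent to $m_{\overline{G}}(2d-4,n]\ge 2$, i.e. to $\mu_2(\overline{G})>2d-4$; likewise the bound $n-2$ in part (i) is equivalent to $\mu_1(\overline{G})>2d-4$. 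In terms of $G$ these say $\mu_{n-2}(G)<n-2d+4$ and $\mu_{n-1}(G)<n-2d+4$, so the whole statement reduces to upper bounds on the second and third smallest Laplacian eigenvalues of $G$ in terms of its diameter.

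To produce such bounds I would exploit the diametral path. Fix two vertices at distance $d$, take the distance layers $L_0,\dots,L_d$ from one of them with $s_i=|L_i|\ge 1$, $s_0=1$, $\sum_i s_i=n$, and let $e_i$ be the number of edges between $L_i$ and $L_{i+1}$; since every edge of $G$ joins equal or consecutive layers, $G$ is a subgraph of the blown-up path on these parts. Let $Q$ be the $n\times(d+1)$ matrix with columns $\frac{1}{\sqrt{s_i}}\mathbf 1_{L_i}$ and set $T=Q^{\top}L(G)Q$. Then $T$ is a symmetric tridiagonal positive semidefinite matrix of order $d+1$, with diagonal entries $(e_{i-1}+e_i)/s_i$, off-diagonal entries $-e_i/\sqrt{s_is_{i+1}}$, and null vector $(\sqrt{s_0},\dots,\sqrt{s_d})^{\top}$; in particular its smallest eigenvalue is $0$. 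By Cauchy's interlacing theorem for the compression $Q$ (equivalently, apply the interlacing of Lemma \ref{cauchy} to $\overline{G}\supseteq\overline{W}$, where $W$ is the blown-up path),
\[
\mu_{n-1}(G)\le\lambda_d(T),\qquad \mu_{n-2}(G)\le\lambda_{d-1}(T),
\]
the second and third smallest eigenvalues of $T$, so it suffices to bound these by $n-2d+4$.

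The core is therefore a statement about the order-$(d+1)$ path-like matrix $T$: it has at least three eigenvalues below $n-2d+4$. When $n-2d+4$ is comparatively large (small $d$) this is easy, since only few eigenvalues of a positive semidefinite matrix can be close to its trace, and here
\[
\operatorname{tr}(T)=\sum_{i} e_i\Bigl(\tfrac{1}{s_i}+\tfrac{1}{s_{i+1}}\Bigr)\le\sum_{i=0}^{d-1}(s_i+s_{i+1})=2n-s_0-s_d ;
\]
comparing $2n-s_0-s_d$ with $(d-2)(n-2d+4)$ already pins down essentially the window $3\le d\le\lfloor\frac{n+1}{2}\rfloor$ in which three eigenvalues are forced below the threshold. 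For the complementary range (large $d$, small threshold) I would instead exhibit an explicit three-dimensional subspace on which the Rayleigh quotient of $T$ stays below $n-2d+4$: the null vector together with two layer-supported step vectors $g^{(1)},g^{(2)}$ having disjoint, separated supports and zero weighted mean. For each such vector the Rayleigh quotient is the conductance-type ratio $e_{i^{*}}M_I/(M_LM_R)$ across a sparse gap $i^{*}$ of its supporting interval $I$, and because the supports are separated by an empty layer the three vectors are mutually orthogonal and all off-diagonal contributions vanish, so the quotient over the whole subspace is the maximum of the two ratios.

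The hard part will be estimating these two ratios uniformly over all admissible profiles $(s_i,e_i)$: when the mass is concentrated in a few layers the naive bound $e_i\le s_is_{i+1}$ is too lossy, and the two intervals and their cut points must be chosen adaptively from the actual distribution of the $s_i$ and $e_i$ so that both ratios fall below $n-2d+4$. The boundary diameters are exactly where this is tightest, and this is the source of the special cases: for $d=2$ the partition is too short to build two separated bumps and the claim is handled directly, while for $d=\frac{n+2}{2},\frac{n+3}{2}$ (just beyond the range of part (ii)) only one such vector can be produced, giving the single small eigenvalue $\mu_{n-1}(G)<n-2d+4$ and hence the weaker bound $n-2$. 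Careful bookkeeping of strict versus non-strict inequalities at the threshold $2d-4$, together with the parity conditions making $\frac{n+2}{2},\frac{n+3}{2}$ integral, then completes the argument.
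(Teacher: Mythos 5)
Your reduction is the same as the paper's: both parts amount to showing $\mu_{n-1}(G)<n-2d+4$ (part (i)) and $\mu_{n-2}(G)<n-2d+4$ (part (ii)), and your complement bookkeeping for this is correct. After that, however, your argument is a program rather than a proof, and the program has a concrete failure. The central claim you need is that the tridiagonal quotient matrix $T$ of order $d+1$ always has at least three eigenvalues below $n-2d+4$, and you offer two tools for it. The trace bound gives at most $\frac{2n-2}{n-2d+4}$ eigenvalues at or above the threshold, and you need this to be at most $d-2$, i.e.\ $2n-2<(d-1)(n-2d+4)$; this is \emph{false} for $d=3$ (it reads $2n-2<2n-4$) and more generally fails whenever $n\le 2d+\frac{2}{d-3}$, which is exactly the regime $n=2d-1,2d$ where the theorem is tight (and where your part (i) boundary cases live). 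So the trace argument does not cover ``small $d$'' as you assert --- it covers large $n$ relative to $d$ --- and the entire tight range is pushed onto the test-vector construction, which you explicitly leave open (``the hard part will be estimating these two ratios uniformly over all admissible profiles''). You give no argument that two separated step vectors with Rayleigh quotient below $n-2d+4$ always exist, and it is not even clear that $\lambda_3(T)<n-2d+4$ holds for every admissible profile $(s_i,e_i)$: interlacing only gives $\mu_{n-2}(G)\le\lambda_3(T)$, so the theorem could be true while your intermediate claim about $T$ is not. (Your parenthetical that the compression interlacing is ``equivalent'' to edge-interlacing applied to the blow-up is also not right as stated, though the compression inequality itself is fine.)

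For comparison, the paper does not stay at the level of the layer quotient, precisely because the quotient discards the structural information that drives the tight cases. For $\mu_{n-1}$ it counts the disjoint cut sets $V_2,\dots,V_d$ to get $\kappa(G)\le n-2d+4$ and then invokes Fiedler's inequality $\mu_{n-1}(G)\le\kappa(G)$ together with the Kirkland--Molitierno--Neumann--Shader characterization (Lemma \ref{mun-1}): since $d\ge 3$ rules out a join, the inequality is strict. For $\mu_{n-2}$ it combines the degree bound $\mu_{n-2}(G)\le\delta_{n-1}(G)+1$ with the equality characterization (Corollary \ref{delta}), and in the remaining cases analyzes cut vertices $v_\ell$ with $|V_\ell|=1$, applying Weyl and interlacing to the components of $G-v_\ell$ and again the join characterization. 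These ingredients (twin/join structure, cut-vertex decompositions, minimum-degree vertices) are invisible to $T$. If you want to salvage your route, you would at minimum need to prove the uniform estimate on $\lambda_2(T)$ and $\lambda_3(T)$ over all profiles with $\sum s_i=n$, $s_i\ge 1$, $e_i\le s_is_{i+1}$, including the boundary profiles with $n=2d-1$; as it stands the proof is not complete.
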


Note that the case $d=3$ in Theorem \ref{d-2} (ii) has been given in \cite{XZ}. Also,  the diameter condition is tight in Theorem \ref{d-2}, see Section 4.

%
%

Suppose that $G$ is a connected graph of order $n$  with diameter $d\ge 2$.
Motivated by Theorems \ref{x1}--\ref{d-2} and the trivial fact that
$m_G[n-2d+3,n]\le n-1$
if $d\le \lfloor \frac{n-1}{2}\rfloor$ and $m_G[n-2d+2,n]\le n$ if $d\le \lfloor \frac{n-2}{2}\rfloor$,
we are tempted  to conjecture that  if $c=0, \dots, d$ with  $\max\{2,c\}\le d\le n-2-c$, then $m_G[n-d+2-c,n]\le n-d+c$.

\section{Preliminaries}

Let $G$ be a graph of order $n$  with vertex set $V(G)$ and edge set $E(G)$. For a vertex $v$ of $G$, the neighborhood of $v$, denoted by $N_G(v)$,  is the set of vertices that are adjacent to $v$ in $G$, and
the degree of $v$, denoted by $\delta_G(v)$, is the number of vertices that are adjacent to $v$ in $G$, i.e., $\delta_G(v)=|N_G(v)|$.
The degree sequence of $G$ is the sequence $(\delta_1(G),\dots, \delta_n(G))$ of the degrees of the vertices in non-increasing order.
For $S\subseteq V(G)$, denote by $G[S]$ the subgraph of $G$ induced by $S$ if $S\ne \emptyset$ and $G-S$ denotes $G[V(G)\setminus S]$, that is, the subgraph obtained from $G$ by deleting the vertices of $S$ if $S\ne V(G)$.
For  $F\subseteq E(G)$, denote by $G-F$ the subgraph of $G$ obtained from $G$ by deleting all edges in $F$. In particular, if $F=\{e\}$, then we write   $G-e$ for $G-\{e\}$.
Now suppose that $G$ is connected. The distance between
vertices $v,w$, denoted by $d_G(v, w)$, is the length of  a shortest path
between $v$ and $w$ in $G$.  The diameter
of $G$ is $\max\{d_G(v, w) : v, w \in V (G)\}$.
A path of $G$ that joins a pair of vertices whose  distance is equal to the diameter  is called a diametral path.

For vertex disjoint graphs $G$ and $H$,  denote by $G\cup H$  the disjoint union of them.  The disjoint union of $k$ copies of $G$ is denoted by $kG$.
Denote by $P_n$ the path of order $n$ and $K_n$ the complete graph of order $n$.
For undefined notation and terminology we refer to  \cite{BM}.


%

We need the following lemmas in our proofs.

For an $n\times n$ Hermitian matrix $M$, $\rho_i(M)$ denotes its $i$-th largest eigenvalue of $M$. 
%
We need Weyl's inequalities \cite{We,KT}
with a characterization  of the equality cases
\cite[Theorem 1.3]{So}.

\begin{lemma} \cite[Theorem 1.3]{So} \label{cw}
Let $A$ and $B$ be Hermitian matrices of order $n$.
For $1\le i,j\le n$ with $i+j-1\le n$,
\[
\rho_{i+j-1}(A+B)\le \rho_i(A)+\rho_j(B).
 \]
with equality if and only if there exists a nonzero vector $\mathbf{x}$ such that $\rho_{i+j-1}(A+B)\mathbf{x}=(A+B)\mathbf{x}$, $\rho_i(A)\mathbf{x}=A\mathbf{x}$ and $\rho_j(B)\mathbf{x}=B\mathbf{x}$.
\end{lemma}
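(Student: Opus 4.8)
The plan is to derive both the inequality and the full equality characterization from a single dimension-counting argument, which also makes the equality analysis transparent. Write $\alpha=\rho_i(A)$, $\beta=\rho_j(B)$ and $\lambda=\rho_{i+j-1}(A+B)$. Fix orthonormal eigenbases $a_1,\dots,a_n$ of $A$, $b_1,\dots,b_n$ of $B$ and $c_1,\dots,c_n$ of $A+B$, ordered so that $Aa_k=\rho_k(A)a_k$, $Bb_k=\rho_k(B)b_k$ and $(A+B)c_k=\rho_k(A+B)c_k$. I would then introduce the three subspaces $S=\mathrm{span}\{a_i,\dots,a_n\}$, $T=\mathrm{span}\{b_j,\dots,b_n\}$ and $R=\mathrm{span}\{c_1,\dots,c_{i+j-1}\}$, whose dimensions are $n-i+1$, $n-j+1$ and $i+j-1$ respectively; the hypothesis $i+j-1\le n$ guarantees that $R$ is well defined and that all three dimensions are positive.

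The engine is the elementary Rayleigh-quotient estimate on each subspace together with its equality case. Expanding $x=\sum_{k\ge i}\xi_k a_k\in S$ gives $x^*Ax=\sum_{k\ge i}\rho_k(A)|\xi_k|^2\le\alpha\|x\|^2$, since $\rho_k(A)\le\alpha$ for $k\ge i$; equality forces $\xi_k=0$ whenever $\rho_k(A)<\alpha$, so that $x$ lies in the $\alpha$-eigenspace, i.e.\ $Ax=\alpha x$. Symmetrically, $x^*Bx\le\beta\|x\|^2$ on $T$ with equality iff $Bx=\beta x$, and $x^*(A+B)x\ge\lambda\|x\|^2$ on $R$ with equality iff $(A+B)x=\lambda x$. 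A standard dimension count then yields $\dim(S\cap T\cap R)\ge \dim S+\dim T+\dim R-2n=1$, so there is a nonzero $x\in S\cap T\cap R$. For this $x$,
\[
\lambda\|x\|^2\le x^*(A+B)x=x^*Ax+x^*Bx\le(\alpha+\beta)\|x\|^2,
\]
which gives $\lambda\le\alpha+\beta$, i.e.\ Weyl's inequality.

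For the equality characterization, the \emph{if} direction is immediate: a common eigenvector $x$ as in the statement satisfies $(A+B)x=Ax+Bx=(\alpha+\beta)x$, so $\lambda=\alpha+\beta$. For the \emph{only if} direction I would feed the equality $\lambda=\alpha+\beta$ back into the displayed chain: every inequality there collapses to an equality, forcing $x^*Ax=\alpha\|x\|^2$, $x^*Bx=\beta\|x\|^2$ and $x^*(A+B)x=\lambda\|x\|^2$ simultaneously. Invoking the three equality cases established above, the single vector $x\in S\cap T\cap R$ then satisfies $Ax=\alpha x$, $Bx=\beta x$ and $(A+B)x=\lambda x$, which is exactly the asserted common eigenvector.

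The main obstacle, and the point that deserves the most care, is the equality analysis of the Rayleigh quotients in the second step: one must check not merely that the quotient is bounded by the target eigenvalue on each subspace, but that attaining the bound pins $x$ into the corresponding eigenspace rather than merely into the span of the top eigenvectors. Getting these three equality cases right is what lets the squeeze in the last step produce a \emph{common} eigenvector; the dimension count guarantees that the candidate vector exists, but only the equality hypothesis $\lambda=\alpha+\beta$ activates the collapse that turns it into the required common eigenvector.
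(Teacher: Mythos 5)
Your proof is correct. Note that the paper itself gives no proof of this lemma: it is quoted directly from \cite[Theorem 1.3]{So}, so there is no internal argument to compare against; your write-up supplies a valid self-contained proof of the cited result. Your route is the classical subspace-intersection (Courant--Fischer) argument: the dimension count $\dim(S\cap T\cap R)\ge(n-i+1)+(n-j+1)+(i+j-1)-2n=1$ is valid, the three Rayleigh-quotient bounds with their equality cases are handled correctly (equality does pin the vector into the exact eigenspace, since the defect $\sum_k(\alpha-\rho_k(A))|\xi_k|^2=0$ kills every coefficient attached to an eigenvalue strictly below $\alpha$), and the splitting of $x^*Ax+x^*Bx=(\alpha+\beta)\|x\|^2$ into two separate equalities is legitimate because each summand is individually bounded by its target. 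This is essentially the argument by which the result is established in the cited source, so your proof matches the standard one rather than offering a genuinely different mechanism.
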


We also need two types of interlacing theorem or inclusion principle.

\begin{lemma}\label{interlacing}\cite[Theorem 4.3.28]{HJ}
If $M$ is  a Hermitian matrix of order $n$ and $B$ is its principal submatrix of order $p$, then $\rho_{n-p+i}(M)\le\rho_i(B)\le \rho_{i}(M)$ for $i=1,\dots,p$.
\end{lemma}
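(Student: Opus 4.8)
The plan is to derive both inequalities from the Courant–Fischer variational characterization of the eigenvalues of a Hermitian matrix, which expresses each $\rho_k$ as an extremal value of a Rayleigh quotient over subspaces of prescribed dimension. Concretely, for a Hermitian matrix $M$ of order $n$ and any $1\le k\le n$,
\[
\rho_k(M)=\max_{\dim S=k}\;\min_{\mathbf{0}\neq\mathbf{x}\in S}\frac{\mathbf{x}^*M\mathbf{x}}{\mathbf{x}^*\mathbf{x}}=\min_{\dim S=n-k+1}\;\max_{\mathbf{0}\neq\mathbf{x}\in S}\frac{\mathbf{x}^*M\mathbf{x}}{\mathbf{x}^*\mathbf{x}},
\]
where $S$ ranges over linear subspaces of $\mathbb{C}^n$. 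The key structural observation is that, since $B$ is a principal submatrix of $M$, there is an index set $T$ with $|T|=p$ such that $B$ is obtained by restricting $M$ to the rows and columns indexed by $T$. Let $U\subseteq\mathbb{C}^n$ be the coordinate subspace spanned by the standard basis vectors $\mathbf{e}_j$ with $j\in T$; then $\dim U=p$, and for every $\mathbf{y}\in\mathbb{C}^p$, writing $\mathbf{x}\in U$ for the vector whose $T$-entries equal $\mathbf{y}$ and whose remaining entries vanish, we have $\mathbf{x}^*M\mathbf{x}=\mathbf{y}^*B\mathbf{y}$ and $\mathbf{x}^*\mathbf{x}=\mathbf{y}^*\mathbf{y}$. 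Thus the Rayleigh quotient of $B$ on $\mathbb{C}^p$ coincides with that of $M$ restricted to $U$, and $k$-dimensional subspaces of $\mathbb{C}^p$ correspond bijectively to $k$-dimensional subspaces of $\mathbb{C}^n$ lying inside $U$.

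For the upper bound $\rho_i(B)\le\rho_i(M)$ I would use the max–min form: applying it to $B$ and transporting each competing $i$-dimensional subspace of $\mathbb{C}^p$ to its image $\tilde S\subseteq U$ shows that $\rho_i(B)=\max_{\tilde S\subseteq U,\;\dim\tilde S=i}\min_{\mathbf{0}\neq\mathbf{x}\in\tilde S}\tfrac{\mathbf{x}^*M\mathbf{x}}{\mathbf{x}^*\mathbf{x}}$. Since this maximum is taken over a strictly smaller family of subspaces (only those contained in $U$) than the maximum defining $\rho_i(M)$, enlarging the feasible family can only increase the value, giving $\rho_i(B)\le\rho_i(M)$. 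For the lower bound $\rho_{n-p+i}(M)\le\rho_i(B)$ I would instead use the min–max form: here $\rho_i(B)=\min_{\tilde S\subseteq U,\;\dim\tilde S=p-i+1}\max_{\mathbf{0}\neq\mathbf{x}\in\tilde S}\tfrac{\mathbf{x}^*M\mathbf{x}}{\mathbf{x}^*\mathbf{x}}$, while $\rho_{n-p+i}(M)$ is the same minimum taken over all $(p-i+1)$-dimensional subspaces of $\mathbb{C}^n$, because $n-(n-p+i)+1=p-i+1$. Restricting the minimization to the subfamily of subspaces contained in $U$ can only increase the value, so $\rho_{n-p+i}(M)\le\rho_i(B)$.

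Combining the two estimates yields $\rho_{n-p+i}(M)\le\rho_i(B)\le\rho_i(M)$ for $i=1,\dots,p$, as claimed. The step I expect to require the most care is matching the dimension indices correctly: one must verify that the subspace dimension $p-i+1$ appearing in the min–max form for $B$ is exactly the dimension $n-k+1$ that produces $\rho_{n-p+i}(M)$ on the ambient space, and that the inclusion $U\subseteq\mathbb{C}^n$ pushes the extremum in the correct direction in each case (a larger family enlarges a maximum but lowers a minimum, so the two bounds go opposite ways). An alternative route, should one wish to avoid Courant–Fischer, is to prove the single-deletion case $p=n-1$ directly—e.g. by an eigenvector-counting or rank argument—and then iterate it $n-p$ times; but the variational argument handles arbitrary $p$ in one stroke and is the cleaner option.
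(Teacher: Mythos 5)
Your proof is correct, but note that the paper itself offers no proof of this lemma: it is quoted as a known tool directly from \cite[Theorem 4.3.28]{HJ}, so the only meaningful comparison is with that reference's argument. Your Courant--Fischer derivation is sound and complete: the identification of $\mathbb{C}^p$ with the coordinate subspace $U$ preserves Rayleigh quotients, the dimension bookkeeping is exactly right (for the lower bound, $n-(n-p+i)+1=p-i+1$, so the min--max for $\rho_{n-p+i}(M)$ and for $\rho_i(B)$ range over subspaces of the same dimension), and the two extrema move in the correct directions, since enlarging the family of competing subspaces can only raise a maximum and lower a minimum. For what it is worth, the proof in Horn and Johnson proceeds along the alternative route you mention at the end: one first establishes the single-deletion case $p=n-1$ (Cauchy interlacing for a bordered Hermitian matrix) and then iterates it $n-p$ times, tracking the index shifts at each step. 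Your variational argument handles arbitrary $p$ in one pass and avoids that iteration, at the cost of invoking the full min--max theorem in both of its forms; the two approaches are equally standard, and either suffices for the way the lemma is used in this paper.
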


\begin{lemma}\label{cauchy} \cite[Theorem 3.2]{Moh}
If $G$ is a graph on $n$ vertices with $e\in E(G)$, then
\[
\mu_1(G)\ge\mu_1(G-e)\ge \mu_2(G)\ge\dots\ge \mu_{n-1}(G-e)\ge \mu_n(G)=\mu_{n}(G-e)=0.
\]
\end{lemma}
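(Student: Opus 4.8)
The plan is to realize the single-edge deletion as a rank-one positive semidefinite perturbation of the Laplacian and then read off the interlacing directly from Weyl's inequalities (Lemma \ref{cw}). Write $e=uv$ and let $\mathbf{b}=\mathbf{e}_u-\mathbf{e}_v$, where $\mathbf{e}_u,\mathbf{e}_v$ denote the standard basis vectors indexed by $u$ and $v$. Comparing entries (deleting $e$ lowers the diagonal entries at $u$ and $v$ by $1$ and removes the two off-diagonal $-1$'s) shows that $L(G)=L(G-e)+\mathbf{b}\mathbf{b}^{\top}$. The perturbation $B:=\mathbf{b}\mathbf{b}^{\top}$ is Hermitian, positive semidefinite, and of rank one, so its eigenvalues are $\rho_1(B)=2$ and $\rho_2(B)=\dots=\rho_n(B)=0$. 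This rank-one structure, in particular the fact that $\rho_2(B)=0$, is exactly what forces the two families of eigenvalues to interlace.

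For the inequalities $\mu_i(G-e)\ge\mu_{i+1}(G)$ with $1\le i\le n-1$, I would apply Lemma \ref{cw} with $A=L(G-e)$ and the matrix $B$ above, choosing $j=2$: since $i+j-1=i+1\le n$, the lemma gives $\mu_{i+1}(G)=\rho_{i+1}(L(G-e)+B)\le\rho_i(L(G-e))+\rho_2(B)=\mu_i(G-e)$. For the reverse inequalities $\mu_i(G)\ge\mu_i(G-e)$ with $1\le i\le n$, the key move is to apply Lemma \ref{cw} instead to the pair $L(G)$ and $-B$, whose sum is $L(G-e)$, taking $j=1$: this yields $\mu_i(G-e)=\rho_i(L(G)+(-B))\le\rho_i(L(G))+\rho_1(-B)=\mu_i(G)$, where $\rho_1(-B)=-\rho_n(B)=0$. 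Interleaving these two chains of inequalities produces precisely $\mu_1(G)\ge\mu_1(G-e)\ge\mu_2(G)\ge\dots\ge\mu_{n-1}(G-e)\ge\mu_n(G)$.

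It remains to pin down the tail $\mu_n(G)=\mu_n(G-e)=0$. This is immediate from the fact that the all-ones vector lies in the kernel of every Laplacian, so $0$ is an eigenvalue of both $L(G)$ and $L(G-e)$; since Laplacians are positive semidefinite, $0$ is the smallest eigenvalue of each, giving $\mu_n(G)=\mu_n(G-e)=0$. Combined with nonnegativity this also supplies the final link $\mu_{n-1}(G-e)\ge\mu_n(G)=0$ (which is in any case the $i=n-1$ case of the bound in the previous paragraph).

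The argument has no genuine obstacle; its only subtlety is directional. Weyl's inequality as recorded in Lemma \ref{cw} is the single upper bound $\rho_{i+j-1}(A+B)\le\rho_i(A)+\rho_j(B)$, so the lower interlacing direction $\mu_i(G)\ge\mu_i(G-e)$ is not available by a direct substitution. The step I expect to require the most care is therefore the reformulation $L(G-e)=L(G)+(-B)$, which lets the very same upper bound deliver the reverse inequality once one records $\rho_1(-B)=-\rho_n(B)=0$. Everything else is routine bookkeeping on the index ranges $i+j-1\le n$.
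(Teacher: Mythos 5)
Your proposal is correct. Note that the paper itself offers no proof of this lemma: it is quoted directly from Mohar (Theorem 3.2 of the cited survey), so there is no internal argument to compare against. Your argument is the standard one for edge-deletion interlacing, and it is complete: the decomposition $L(G)=L(G-e)+\mathbf{b}\mathbf{b}^{\top}$ with $\mathbf{b}=\mathbf{e}_u-\mathbf{e}_v$ is verified correctly, the rank-one perturbation has spectrum $\{2,0,\dots,0\}$, and both directions of the interlacing follow from the single upper-bound form of Weyl's inequality (Lemma \ref{cw}) --- the forward direction with $j=2$ using $\rho_2(\mathbf{b}\mathbf{b}^{\top})=0$, and the reverse direction by rewriting $L(G-e)=L(G)+(-\mathbf{b}\mathbf{b}^{\top})$ with $j=1$ and $\rho_1(-\mathbf{b}\mathbf{b}^{\top})=0$. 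The index bookkeeping ($i+j-1\le n$ in both applications) and the tail $\mu_n(G)=\mu_n(G-e)=0$ via the all-ones kernel vector and positive semidefiniteness are also handled correctly, so your write-up is a valid self-contained proof using only tools already stated in the paper.
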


For integers $n$, $d$ and $t$ with $2\le d\le n-2$ and $2\le t\le d$, let $P_{d+1}:=u_1\dots u_{d+1}$,  $V=V(K_{n-d-1})$ and let $G_{n,d,t}$ be the graph  obtained from the disjoint union of $P_{d+1}$ and  $K_{n-d-1}$  by adding all   edges in $\{u_iw: i=t-1,t,t+1, w\in V\}$.

%

\begin{lemma}\label{gndt}\cite{XZ1} For integers $n$, $d$ and $t$ with $2\le d\le n-2$ and $2\le t\le d$, $\mu_{n-d}( G_{n,d,t})=n-d+2$.
\end{lemma}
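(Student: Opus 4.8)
Write $k=n-d-1$ and $G=G_{n,d,t}$, so that the target value is $n-d+2=k+3$. The plan is to pin this eigenvalue from two sides: first produce enough explicit eigenvectors to force a large multiplicity of $k+3$, then trap the count $m_G[n-d+2,n]$ from below by interlacing and from above by the already available Theorem \ref{x1}, and finally combine the two facts to locate $\mu_{n-d}(G)$ exactly.

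First I would establish that $k+3$ is an eigenvalue of multiplicity at least $n-d-1$. Each vertex $w\in V=V(K_{n-d-1})$ has degree $(k-1)+3=k+2$, so for any vector $\mathbf{x}$ supported on $V$ with $\sum_{w\in V}x_w=0$ a direct computation gives $L(G)\mathbf{x}=(k+3)\mathbf{x}$; this furnishes a $(k-1)$-dimensional eigenspace. To it I adjoin the single vector $\mathbf{y}=e_{u_t}-\tfrac1k\mathbf 1_V$: using that $u_t$ is an internal path vertex and that each of $u_{t-1},u_t,u_{t+1}$ is adjacent to all of $V$, one checks directly that $L(G)\mathbf{y}=(k+3)\mathbf{y}$. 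Since $\mathbf{y}$ has a nonzero $u_t$-coordinate and nonzero net mass on $V$, it is independent of the sum-zero vectors, so the multiplicity of $k+3$ is at least $(k-1)+1=n-d-1$.

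For the lower bound on the count I would take $S=\{u_{t-1},u_t,u_{t+1}\}\cup V$, so $|S|=k+3=n-d+2$, and observe that the induced subgraph $G[S]$ is the join $P_3\vee K_{k}$. By the join formula its Laplacian spectrum is $\{0,\,k+1\}$ together with $k+3$ of multiplicity $k+1=n-d$; in particular its largest $n-d$ Laplacian eigenvalues all equal $n-d+2$. The principal submatrix $L(G)[S]$ equals $L(G[S])+\Delta$, where $\Delta$ is a nonnegative diagonal matrix recording the edges from $u_{t-1},u_{t+1}$ to $u_{t-2},u_{t+2}$. By monotonicity under a positive semidefinite perturbation together with Lemma \ref{interlacing}, $\mu_i(G)\ge\rho_i(L(G)[S])\ge\rho_i(L(G[S]))=n-d+2$ for $i=1,\dots,n-d$, whence $m_G[n-d+2,n]\ge n-d$. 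Theorem \ref{x1} supplies the reverse inequality $m_G[n-d+2,n]\le n-d$, so exactly $n-d$ Laplacian eigenvalues lie in $[n-d+2,n]$. Since at least $n-d-1$ of them equal $n-d+2$ by the multiplicity step, at most one can exceed $n-d+2$; as $d\le n-2$ forces $n-d\ge 2$, that exceptional eigenvalue (if any) is $\mu_1(G)$, and therefore the smallest of the $n-d$ eigenvalues, namely $\mu_{n-d}(G)$, equals $n-d+2$.

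The genuine difficulty here is the upper side $\mu_{n-d}(G)\le n-d+2$: interlacing in $G$ only ever produces lower bounds, so the exact value cannot be read off from one submatrix, and the decisive inputs are the multiplicity count (which caps the number of eigenvalues strictly above $n-d+2$ once the total in $[n-d+2,n]$ is known) together with Theorem \ref{x1}. I expect the most error-prone technical points to be the verification that $\mathbf{y}$ is an eigenvector and the identification of $G[S]$ as $P_3\vee K_{n-d-1}$ with the stated spectrum. A tempting but doomed alternative is to split $L(G)=L(K_{n-d-1})+L\bigl(G-E(K_{n-d-1})\bigr)$ and apply Weyl's inequality (Lemma \ref{cw}); this fails because the second summand already has its second-largest Laplacian eigenvalue above $n-d+2$, so the edges of the clique are essential for pushing the interaction modes down onto the value $n-d+2$.
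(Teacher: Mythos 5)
Your argument is correct. Note first that the paper does not prove this lemma at all --- it is imported verbatim from \cite{XZ1} --- so there is no in-paper proof to match; what you have produced is a valid derivation from the other results the paper quotes. Your three ingredients all check out: the sum-zero vectors on $V(K_{n-d-1})$ together with $\mathbf{y}=e_{u_t}-\tfrac1k\mathbf 1_V$ do give multiplicity at least $n-d-1$ for the eigenvalue $n-d+2$ (I verified the eigenvector equation at $u_t$, at the clique vertices, and at $u_{t\pm1}$, $u_{t\pm2}$); the induced subgraph on $\{u_{t-1},u_t,u_{t+1}\}\cup V$ is indeed $P_3\vee K_{n-d-1}$, whose top $n-d$ Laplacian eigenvalues equal $n-d+2$, and the interlacing-plus-diagonal-perturbation step correctly yields $\mu_{n-d}(G)\ge n-d+2$ (the inequality $\rho_i(A+\Delta)\ge\rho_i(A)$ for $\Delta\succeq0$ is the companion Weyl inequality to the one stated in Lemma \ref{cw}, but it is standard); and the final counting argument, using that $n-d-1\ge1$ when $d\le n-2$, pins $\mu_{n-d}(G)=n-d+2$. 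Two remarks. First, the interlacing lower bound is genuinely needed and you were right to include it: multiplicity $\ge n-d-1$ plus Theorem \ref{x1} alone would still allow the block of eigenvalues equal to $n-d+2$ to sit at positions $1,\dots,n-d-1$ with $\mu_{n-d}<n-d+2$. Second, your proof leans on Theorem \ref{x1}, which is the main theorem of the very paper \cite{XZ1} from which this lemma is taken, and whose proof there proceeds by isolating $G_{n,d,t}$ as an exceptional graph (cf.\ Lemma \ref{z+}); so as a reconstruction of the original argument your route would be circular, and the source necessarily computes the spectrum of $G_{n,d,t}$ directly. Within the present paper, where Theorem \ref{x1} is available as a black box, your shortcut is logically sound and considerably lighter than a full spectral computation.
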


For integers $n$, $p$ and $q$ with $2\le p\le q\le n-3$, let $H_{n,p,q}$ be the graph obtained from
$G_{n-1, n-3,p}$ (with a diametral path $u_1\dots u_{n-2}$ and an additional vertex $u$ outside) by adding a vertex $v$ and three edges connecting $v$ and $u_{q-1}, u_q$ and $u_{q+1}$ if $q\ge p+2$
and four edges connecting $v$ and $u_{q-1}, u_q$, $u_{q+1}$ and $u$ if $q=p,p+1$, see Figs.~\ref{PQQ} and \ref{PQ}.

\begin{figure}[htbp]
\centering
\begin{tikzpicture}
\draw  [black](0,0)--(0.5,0);
\draw  [black, dashed](0.5,0)--(1.5,0);
\draw  [black](1.5,0)--(2,0)--(3,0)--(4,0)--(4.5,0);
\filldraw [black] (0,0) circle (2pt);
\filldraw [black] (2,0) circle (2pt);
\filldraw [black] (3,0) circle (2pt);
\filldraw [black] (4,0) circle (2pt);
\draw  [black, dashed](4.5,0)--(5.5,0);
\draw  [black](5.5,0)--(6,0)--(7,0)--(8,0)--(8.5,0);
\filldraw [black] (6,0) circle (2pt);
\filldraw [black] (7,0) circle (2pt);
\filldraw [black] (8,0) circle (2pt);
\draw  [black, dashed](8.5,0)--(9.5,0);
\draw  [black](9.5,0)--(10,0);
\filldraw [black] (10,0) circle (2pt);
\filldraw [black] (3,2) circle (2pt);
\filldraw [black] (7,2) circle (2pt);
\draw  [black](2,0)--(3,2);
\draw  [black](3,0)--(3,2);
\draw  [black](4,0)--(3,2);
\draw  [black](6,0)--(7,2);
\draw  [black](7,0)--(7,2);
\draw  [black](8,0)--(7,2);
\node at (0,-0.4) {$u_1$};
\node at (2, -0.4) {$u_{p-1}$};
\node at (3,-0.4) {$u_p$};
\node at (4,-0.4) {$u_{p+1}$};
\node at (6,-0.4) {$u_{q-1}$};
\node at (7, -0.4) {$u_q$};
\node at (8,-0.4) {$u_{q+1}$};
\node at (10,-0.4) {$u_{n-2}$};
\node at (3,2.3) {$u$};
\node at (7,2.3) {$v$};
\end{tikzpicture}
\caption{The graph $H_{n,p,q}$ with $q\ge p+2$.}
\label{PQQ}
\end{figure}
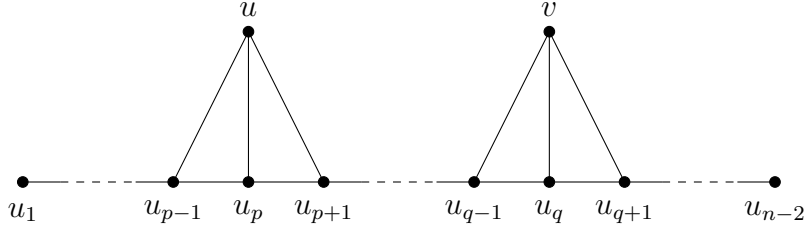


\begin{figure}[htbp]
\centering
\begin{tikzpicture}
\draw  [black](0,0)--(0.5,0);
\draw  [black, dashed](0.5,0)--(1.5,0);
\draw  [black](1.5,0)--(2,0)--(3,0)--(4,0)--(5,0)--(5.5,0);
\filldraw [black] (0,0) circle (2pt);
\filldraw [black] (2,0) circle (2pt);
\filldraw [black] (3,0) circle (2pt);
\filldraw [black] (4,0) circle (2pt);
\filldraw [black] (5,0) circle (2pt);
\draw  [black, dashed](5.5,0)--(6.5,0);
\draw  [black](6.5,0)--(7,0);
\filldraw [black] (7,0) circle (2pt);
\filldraw [black] (3,2) circle (2pt);
\filldraw [black] (4,2) circle (2pt);
\draw  [black](2,0)--(3,2);
\draw  [black](3,0)--(3,2);
\draw  [black](4,0)--(3,2);
\draw  [black](3,0)--(4,2);
\draw  [black](4,0)--(4,2);
\draw  [black](5,0)--(4,2);
\draw  [black](3,2)--(4,2);
\node at (0,-0.4) {$u_1$};
\node at (2, -0.4) {$u_{p-1}$};
\node at (3,-0.4) {$u_p$};
\node at (4,-0.4) {$u_{p+1}$};
\node at (5,-0.4) {$u_{p+2}$};
\node at (7,-0.4) {$u_{n-2}$};
\node at (3,2.3) {$u$};
\node at (4,2.3) {$v$};
\end{tikzpicture}
\begin{tikzpicture}
\draw  [black](0,0)--(0.5,0);
\draw  [black, dashed](0.5,0)--(1.5,0);
\draw  [black](1.5,0)--(2,0)--(3,0)--(4,0)--(4.5,0);
\filldraw [black] (0,0) circle (2pt);
\filldraw [black] (2,0) circle (2pt);
\filldraw [black] (3,0) circle (2pt);
\filldraw [black] (4,0) circle (2pt);
\draw  [black, dashed](4.5,0)--(5.5,0);
\draw  [black](5.5,0)--(6,0);
\filldraw [black] (6,0) circle (2pt);
\filldraw [black] (2.5,2) circle (2pt);
\filldraw [black] (3.5,2) circle (2pt);
\draw  [black](2,0)--(2.5,2);
\draw  [black](3,0)--(2.5,2);
\draw  [black](4,0)--(2.5,2);
\draw  [black](2,0)--(3.5,2);
\draw  [black](3,0)--(3.5,2);
\draw  [black](4,0)--(3.5,2);
\draw  [black](2.5,2)--(3.5,2);
\node at (0,-0.4) {$u_1$};
\node at (2, -0.4) {$u_{p-1}$};
\node at (3,-0.4) {$u_p$};
\node at (4,-0.4) {$u_{p+1}$};
\node at (6,-0.4) {$u_{n-2}$};
\node at (2.5,2.3) {$u$};
\node at (3.5,2.3) {$v$};
\end{tikzpicture}
\caption{The graph $H_{n,p,q}$ with $q= p+1$ (left) and $q=p$ (right).}
\label{PQ}
\end{figure}
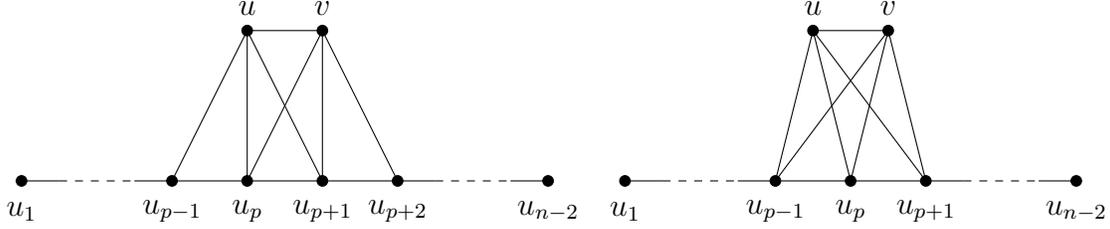

For integers $n$, $d$, $r$ and $a$ with $3\le d\le n-2$, $2\le r\le d-1$ and $1\le a\le n-d-2$,  let
$P_{d+1}:=u_1\dots u_{d+1}$  and $V(K_{n-d-1})=V_1\cup V_2$ with $|V_1|=a$, and let
$G_{n,d,r,a}$ be the graph obtained from the disjoint union of $P_{d+1}$ and  $K_{n-d-1}$ by adding
all edges in $\{u_iv: i=r-1,r,r+1, v\in V_1\}\cup \{u_jw: j=r, r+1,r+2, w\in V_2\}$.

\begin{lemma}\label{Hpq} \label{gndla} The following statements are true.

(i) $\mu_1(P_n)<4$.

(ii) $\mu_5(H_{n,p,q})<4$.

(iii)  
$\mu_5(G_{7,3,2,1})<4$.
\end{lemma}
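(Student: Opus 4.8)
The unifying idea is to translate each claim into a statement about the inertia of $N(G):=4I_n-L(G)$. Indeed $\mu_5(G)<4$ holds if and only if at most four eigenvalues of $L(G)$ are at least $4$, that is, if and only if $N(G)$ has at most four non-positive eigenvalues (counting a zero eigenvalue as non-positive). Part (i) then requires nothing beyond the formula $\mu_j(P_n)=4\sin^2\frac{(n-j)\pi}{2n}$ quoted in the Introduction: taking $j=1$ and noting $0<\frac{(n-1)\pi}{2n}<\frac{\pi}{2}$ gives $\mu_1(P_n)=4\sin^2\frac{(n-1)\pi}{2n}<4$.

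For (ii) I would compute the inertia of $N:=N(H_{n,p,q})$ by eliminating the two fan centres $u,v$ through a block congruence. Writing $N=\bigl(\begin{smallmatrix}G&F\\F^{\top}&E\end{smallmatrix}\bigr)$ with $E$ the $2\times2$ block indexed by $\{u,v\}$, the elementary congruence by $\bigl(\begin{smallmatrix}I&-FE^{-1}\\0&I\end{smallmatrix}\bigr)$ turns $N$ into $(G-FE^{-1}F^{\top})\oplus E$ whenever $E$ is invertible, so by Sylvester's law of inertia the inertia of $N$ is the sum of those of $E$ and of the Schur complement $\widetilde N:=G-FE^{-1}F^{\top}$. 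In the principal case $q\ge p+3$ one has $E=I_2$, which contributes $(2,0,0)$, and $\widetilde N$ is the principal block of $N$ on the path $u_1\cdots u_{n-2}$ minus $c_uc_u^{\top}+c_vc_v^{\top}$, where $c_u,c_v$ are the indicator vectors of the neighbour triples $\{u_{p-1},u_p,u_{p+1}\}$ and $\{u_{q-1},u_q,u_{q+1}\}$.

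A direct inspection shows that in $\widetilde N$ the two middle vertices $u_p,u_q$ become isolated rows with zero diagonal (two zero eigenvalues), while each triple degenerates into a single weight-$(-1)$ edge between its outer vertices. Deleting $u_p,u_q$ leaves a tridiagonal matrix $\widehat N$ whose diagonal equals $2$ at interior path vertices, $3$ at the two path ends, and $0$ at the four vertices $u_{p\pm1},u_{q\pm1}$; the off-diagonal signs may be flipped by a $\pm1$ diagonal congruence and so do not affect the spectrum. Feeding the diagonal into the three-term recurrence $D_k=(\text{diag}_k)\,D_{k-1}-D_{k-2}$ for the leading principal minors, the sequence is positive up to the first pair of zeros, becomes negative there, stays negative across the middle, and returns positive after the second pair; hence $\widehat N$ has exactly two negative and no zero eigenvalues. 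Adding up, $N$ has inertia $(n-4,2,2)$, so $L(H_{n,p,q})$ has exactly two eigenvalues exceeding $4$ and exactly two equal to $4$, giving $\mu_5<4$. The real work — and the step I expect to be most delicate — is the accompanying case analysis: when $p=2$ or $q=n-3$ an outer vertex of a triple is a path end and its diagonal entry changes; when $q=p+2$ the triples share $u_{p+1}$ (now of degree $4$, hit by both updates); and when $q\in\{p,p+1\}$ the extra edge $uv$ makes $E=\bigl(\begin{smallmatrix}0&1\\1&0\end{smallmatrix}\bigr)$ of inertia $(1,0,1)$ and replaces the rank-two diagonal update by an off-diagonal one. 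In each variant the local minor bookkeeping must be redone, but it always yields at most four non-positive eigenvalues of $N$, which is exactly what $\mu_5<4$ demands; a crude low-rank perturbation bound is too lossy here, so the sharp minor count is genuinely needed.

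For (iii) I would exploit the involution of $G_{7,3,2,1}$ that swaps the twin vertices $y,z$ of $V_2$. On the one-dimensional antisymmetric subspace spanned by $e_y-e_z$ the Laplacian acts as the scalar $6$, contributing one eigenvalue above $4$. It then remains to count, on the six-dimensional symmetric subspace, the eigenvalues that reach $4$; passing to the $6\times6$ quotient matrix (or, equivalently, applying the inertia argument above to $4I-L$ restricted to that subspace) one checks that at most three of them are at least $4$. Hence $m_{G_{7,3,2,1}}[4,7]\le4$ and $\mu_5(G_{7,3,2,1})<4$.
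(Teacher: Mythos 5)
Your part (i) is exactly the paper's argument (the closed form $\mu_1(P_n)=4\sin^2\frac{(n-1)\pi}{2n}$). For parts (ii) and (iii) you take a genuinely different route: the paper disposes of (ii) by citing the proof of \cite[Theorem 4]{XZ1} and of (iii) by a direct numerical computation ($\mu_5(G_{7,3,2,1})=3.4048$), whereas you build a self-contained argument from the inertia of $N=4I-L$ and Haynsworth additivity over the Schur complement. The idea is sound and your generic case ($q\ge p+3$, $p\ge 3$, $q\le n-4$) checks out: with $E=I_2$ the complement does acquire zero rows at $u_p$ and $u_q$ (equivalently, $e_{u_p}-e_u$ and $e_{u_q}-e_v$ are honest $4$-eigenvectors of $L$), and the Sturm-sequence count on the remaining tridiagonal matrix produces exactly two sign changes, giving inertia $(n-4,2,2)$ for $N$ and hence $\mu_5<4$. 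What your version buys is an explanation of \emph{why} the bound holds (exactly two eigenvalues above $4$ and two equal to $4$); what it costs is that the whole burden of proof now sits in the case analysis rather than in a citation.

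And that is where the gap is -- the one you flag yourself. The degenerate configurations are not uniform perturbations of the generic computation, and none of them is actually verified. When $q=p+2$ the two rank-one updates overlap at $u_{p+1}=u_{q-1}$, whose diagonal drops to $-2$ rather than $0$, so the ``two isolated zero rows'' picture breaks. When $q\in\{p,p+1\}$ the block $E=\left(\begin{smallmatrix}0&1\\1&0\end{smallmatrix}\right)$ is indefinite and already contributes one negative eigenvalue, the complement becomes $G-2cc^{\top}$ (for $q=p$) with diagonal $-2$ on the whole triple, and the target changes: you must now show the complement has at most \emph{three} non-positive eigenvalues, while its $3\times 3$ block alone has two negative and one zero eigenvalue, so there is no slack and interlacing alone will not close it. The cases $p=2$ and $q=n-3$ further perturb the boundary minors. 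Each of these is a finite computation, but until they are written out the argument for (ii) is a program, not a proof. For (iii) the twin-involution reduction to a $6\times 6$ quotient is clean, but the concluding ``one checks'' is no more (and no less) rigorous than the paper's direct calculation; either way an explicit computation must be exhibited.
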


\begin{proof}
Part (i) follows from the fact that $\mu_1(P_n)=4\sin^2\frac{(n-1)\pi}{2n}$ given in \cite[p.~145]{AM}.

Part (ii) follows from the proof of \cite[Theorem 4]{XZ1}.

Part (iii) follows from a direct calculation that $\mu_5(G_{7,3,2,1})=3.4048<4$.
\end{proof}


\begin{lemma}\label{z+} \cite{XZ1}
Let $G$ be a connected graph of order $n$ with diameter $d$,  where $2\le d\le n-2$.
Suppose that $G$ is not isomorphic to $G_{n,d,t}$ for $2\le t\le d$, and $G$ is not isomorphic to  $G_{n,d,r,a}$
for  $2\le r\le d-1$ and $1\le a\le n-d-2$.
Then 
$m_G[n-d+2,n]\le n-d-1$.
\end{lemma}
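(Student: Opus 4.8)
The plan is to read Lemma \ref{z+} as a characterization of the equality case of Theorem \ref{x1}. Since Theorem \ref{x1} already supplies $m_G[n-d+2,n]\le n-d$ for every connected $G$ of order $n$ with $2\le d\le n-2$, I may take that bound for granted and only need to rule out the top value for the excluded graphs. In terms of the ordered spectrum this is convenient: because $\mu_{n-d+1}(G)<n-d+2$ always holds by Theorem \ref{x1}, the equality $m_G[n-d+2,n]=n-d$ is the same as $\mu_{n-d}(G)\ge n-d+2$. So the whole content reduces to the implication ``$\mu_{n-d}(G)\ge n-d+2\Rightarrow G\cong G_{n,d,t}$ for some $t$, or $G\cong G_{n,d,r,a}$ for some $r,a$''. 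The reverse implication is already visible from the preliminaries: Lemma \ref{gndt} gives $\mu_{n-d}(G_{n,d,t})=n-d+2$, and the same mechanism gives $\mu_{n-d}(G_{n,d,r,a})=n-d+2$, so both families attain equality and must appear in the conclusion.

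Next I would fix a diametral path $P:u_1\cdots u_{d+1}$ and set $W=V(G)\setminus V(P)$, so that $|W|=n-d-1$. Two structural facts drive everything. First, $P$ is induced, since a shortest path has no chords, so the only edges among path vertices are the $u_iu_{i+1}$. Second, each $w\in W$ can be adjacent only to three consecutive path vertices $u_{s-1},u_s,u_{s+1}$: if $w$ were adjacent to $u_i$ and $u_j$ with $j\ge i+3$, then $u_1\cdots u_i\,w\,u_j\cdots u_{d+1}$ would be shorter than $P$, contradicting $d_G(u_1,u_{d+1})=d$. These two facts are the combinatorial skeleton of the graphs $G_{n,d,t}$ and $G_{n,d,r,a}$, and they locate the source of the eigenvalue $n-d+2$: a clique on $W$ joined to one window of three consecutive path vertices is an induced copy of $K_{|W|}\vee P_3$, which already carries $n-d$ Laplacian eigenvalues equal to $n-d+2$; deleting the remaining edges and using Lemma \ref{cauchy} together with Lemma \ref{interlacing} is precisely what pushes $\mu_{n-d}(G)$ up to the threshold in the extremal graphs.

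For the converse I would run an equality analysis of the bound, using Weyl's inequality with its equality clause (Lemma \ref{cw}) as the extraction tool. Split $L(G)=L(G_1)+L(G_2)$, where $G_1$ carries the edges inside $W$ together with the path edges and $G_2$ carries the edges between $W$ and $P$. By Lemma \ref{gndla}(i) the path contributes only eigenvalues below $4\le n-d+2$, and the clique-on-$W$ piece contributes eigenvalues at most $|W|=n-d-1$, so $G_1$ alone cannot reach the threshold $n-d+2$; the threshold can only be produced by the $G_1$–$G_2$ interaction. Assuming $\mu_{n-d}(G)\ge n-d+2$ and tracking which index pairs make the estimate tight, the equality clause of Lemma \ref{cw} supplies a single nonzero vector $\mathbf{x}$ that is simultaneously an eigenvector of $L(G)$, $L(G_1)$ and $L(G_2)$ at the relevant eigenvalues. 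Decoding the support and sign pattern of $\mathbf{x}$ on $W$ and on $P$ is what pins down the structure: it should force $G[W]$ to be complete, force each $w\in W$ to be adjacent to all three vertices of its window and to nothing else on $P$, and force the windows used by distinct vertices of $W$ either to coincide (giving $G_{n,d,t}$) or to be two windows overlapping in two vertices (giving $G_{n,d,r,a}$).

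The hard part is exactly this equality analysis, and I expect friction from two directions. First, the crude additive estimates are loose by one unit (a naive Weyl split only yields $n-d+3$, hence merely $m_G\le n-d+1$), so the argument cannot rest on any single inequality; it must genuinely exploit the shared-eigenvector characterization to squeeze out the extra unit and then convert an analytic statement about $\mathbf{x}$ into an exact adjacency pattern. Second, a finite list of near-extremal configurations is not killed by the structural reduction and must be eliminated by hand, most visibly at the boundary threshold $n-d+2=4$, i.e.\ $d=n-2$. There the graphs $H_{n,p,q}$ and the small graph $G_{7,3,2,1}$ look almost extremal but are not on the list; they are dispatched by the explicit estimates $\mu_5(H_{n,p,q})<4$ and $\mu_5(G_{7,3,2,1})<4$ of Lemma \ref{gndla}(ii),(iii), which give $\mu_{n-d}(G)<n-d+2$ and hence the strict bound $m_G[n-d+2,n]\le n-d-1$ for them, confirming that they are correctly excluded.
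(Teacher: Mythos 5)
First, note that the paper never proves Lemma \ref{z+}: it is imported verbatim from \cite{XZ1}, so there is no in-paper proof to match your argument against. Judged on its own terms, your write-up is a plan rather than a proof, and the plan has a genuine gap at its centre. Your reduction of the lemma to the implication ``$\mu_{n-d}(G)\ge n-d+2\Rightarrow G\cong G_{n,d,t}$ or $G\cong G_{n,d,r,a}$'' is correct (Theorem \ref{x1} gives $\mu_{n-d+1}(G)<n-d+2$, so only the $(n-d)$-th eigenvalue is in question), and the two structural facts about a diametral path are right. But the entire classification step is left as a description of what the equality clause of Lemma \ref{cw} ``should force''. Concretely: for your split $L(G)=L(G_1)+L(G_2)$ you never exhibit an index pair $i+j-1=n-d$ with $\rho_i(L(G_1))+\rho_j(L(G_2))\le n-d+2$ valid for an arbitrary $G$ --- without such a pair there is no equality case to analyse --- and even granting one, the passage from ``$\mathbf{x}$ is a common eigenvector of $L(G_1)$ and $L(G_2)$'' to ``$G[W]$ is complete and the windows coincide or overlap in two vertices'' is exactly the content of the lemma and is nowhere argued. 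You acknowledge this yourself (``the hard part is exactly this equality analysis''), which is an honest admission that the proof is not there.

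A second, concrete error is in your closing paragraph. The graph $H_{n,p,q}$ has order $n$ and diameter $d=n-3$, so excluding it from the extremal list of Lemma \ref{z+} requires $\mu_{3}(H_{n,p,q})<5$ (here $n-d=3$ and $n-d+2=5$); the bound $\mu_5(H_{n,p,q})<4$ from Lemma \ref{gndla}(ii) is a different inequality and does not imply it. In the present paper those estimates serve the proof of Theorem \ref{normal}, where $H_{d+4,p,q}$ arises as an order-$(d+4)$ induced subgraph $G'$ of a graph of order $n$ and one needs $\mu_7(G')<4$. That proof also exhibits the decomposition actually used by the authors in this circle of arguments: a principal submatrix $B=L(G')+M$ of $L(G)$ with $M$ diagonal and $\rho_1(M)\le n-|V(G')|$, combined with Lemmas \ref{interlacing} and \ref{cw}. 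That is a different (and more workable) split than your edge-partition of $L(G)$, and it is the route you would need to follow to reconstruct the argument of \cite{XZ1}.
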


Given a graph $G$ with $V(G)=\{v_1,\dots, v_n\}$ and a vector $\mathbf{x}=(x_1,\dots, x_n)^\top$
 can be viewed as a function defined on $V(G)$  mapping $v_i$ to
$x_{v_i}$ i.e., $\mathbf{x}(v_i) = x_{v_i}=x_i$ for  $i=1,\dots, n$.

A pendant path $u_1\dots u_p$ of $G$ at $u_p$ is an induced path of $G$ with $\delta_G(u_1)=1$, $\delta_G(u_p)\ge 3$, and $\delta_G(u_i)=2$ for $i=2,\dots, p-1$ if $p\ge 3$.

\begin{lemma}\label{cha}
Let $P:=v_1\dots v_{\ell}$ be a pendant path of a graph $G$ at $v_{\ell}$.
If there is a vector $\mathbf{x}$ such that $L(G)\mathbf{x}=4\mathbf{x}$, then for $i=1,\dots,\ell$, $x_i=(-1)^{i-1}(2i-1)x_1$, where $x_i=x_{v_i}$.
\end{lemma}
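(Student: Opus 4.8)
The plan is to expand the eigen-equation $L(G)\mathbf{x}=4\mathbf{x}$ row by row along the pendant path and exploit the fact that the relevant rows of $L(G)$ only couple consecutive path vertices. Writing the $v$-th row of the equation as $(\delta_G(v)-4)x_v=\sum_{w\in N_G(v)}x_w$, I would first read off the equation at the pendant end $v_1$. Since $\delta_G(v_1)=1$ and $N_G(v_1)=\{v_2\}$, this forces $-3x_1=x_2$, i.e. $x_2=-3x_1$, which already matches the claimed formula $x_2=(-1)^{1}(2\cdot 2-1)x_1$.

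Next, for each interior vertex $v_i$ with $2\le i\le \ell-1$ the definition of a pendant path gives $\delta_G(v_i)=2$ and $N_G(v_i)=\{v_{i-1},v_{i+1}\}$, so its row reads $-2x_i=x_{i-1}+x_{i+1}$, that is, the second-order linear recurrence $x_{i+1}=-2x_i-x_{i-1}$. The key structural point is that this recurrence is valid precisely for $i$ up to $\ell-1$, hence it determines $x_3,\dots,x_\ell$ from the pair $x_1,x_2$; the branch vertex $v_\ell$ has $\delta_G(v_\ell)\ge 3$ and its row couples to vertices lying off $P$, so its equation is neither available nor needed for the conclusion.

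I would then establish the closed form $x_i=(-1)^{i-1}(2i-1)x_1$ by induction on $i$. The base cases $i=1$ (trivial) and $i=2$ (the $v_1$ equation above) are immediate, and for the inductive step one substitutes the hypotheses for $x_{i-1}$ and $x_i$ into $x_{i+1}=-2x_i-x_{i-1}$ and verifies the algebraic identity $2(2i-1)-(2i-3)=2i+1$, which recovers $x_{i+1}=(-1)^{i}(2i+1)x_1$, as required.

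There is no substantive obstacle: once the rows are expanded, the statement reduces to solving a constant-coefficient recurrence whose characteristic equation $r^2+2r+1=0$ has the double root $r=-1$, so every solution has the form $x_i=(\alpha+\beta i)(-1)^i x_1$, and the boundary data at $v_1$ fix $\alpha$ and $\beta$ to reproduce $x_i=(-1)^{i-1}(2i-1)x_1$. The only point requiring care is the degree bookkeeping: one must confirm that each equation actually invoked (those at $v_1,\dots,v_{\ell-1}$) involves a vertex of the stated degree all of whose neighbors lie on $P$, which is exactly what the definition of a pendant path at $v_\ell$ guarantees.
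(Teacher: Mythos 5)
Your proposal is correct and follows essentially the same route as the paper: read off the eigen-equation at $v_1$ to get $x_2=-3x_1$, then use the degree-2 rows at the interior path vertices to obtain the recurrence $x_{i+1}=-2x_i-x_{i-1}$ and close the induction via $2(2i-1)-(2i-3)=2i+1$. The remark about the double root $r=-1$ of the characteristic polynomial is a harmless extra confirmation, and your observation that the equation at $v_\ell$ is never needed is exactly the point that makes the lemma work.
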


\begin{proof}  We prove the statement  by induction on $i$. It is trivial for $i=1$.
From $L(G)\mathbf{x}=4\mathbf{x}$ at $v_1$, we have $x_1-x_2=4x_1$,
i.e., $x_2=-3x_1$, so the statement is true for $i=2$. Suppose that $2\le i\le \ell-1$ and $x_j=(-1)^{j-1}(2j-1)x_1$ for each $j\le i$.
From $L(G)\mathbf{x}=4\mathbf{x}$ at $v_{i}$, we have \[
2x_{i}-x_{i-1}-x_{i+1}=4x_{i},
\]
so
\[
x_{i+1}=-2x_i-x_{i-1}=(-1)^i(2i+1)x_1. \qedhere
\]
\end{proof}

Denote by $\overline{G}$ the complement of $G$.

\begin{lemma}\label{n-} \cite{Moh} Let $G$ be a graph of order $n$. Then
$\mu_i(G)+\mu_{n-i}(\overline{G})=n$ for $i=1,\dots,n-1$.
\end{lemma}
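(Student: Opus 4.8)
The plan is to reduce the statement to a single matrix identity relating the Laplacians of $G$ and $\overline{G}$, and then to read off the eigenvalue relation by restricting to a convenient invariant subspace. Writing $I_n$ for the identity matrix and $J_n$ for the all-ones matrix of order $n$, I would first observe that the adjacency matrices satisfy $A(G)+A(\overline{G})=A(K_n)=J_n-I_n$ and the degree matrices satisfy $D(G)+D(\overline{G})=(n-1)I_n$, since every vertex has degree $n-1$ in $K_n$. Subtracting gives
\[
L(G)+L(\overline{G})=(n-1)I_n-(J_n-I_n)=nI_n-J_n=L(K_n).
\]
This is the only computational input; everything else is linear algebra.

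Next I would exploit the common null vector. The all-ones vector $\mathbf{1}$ satisfies $L(G)\mathbf{1}=L(\overline{G})\mathbf{1}=\mathbf{0}$ because every row sum of a Laplacian vanishes, and it is also an eigenvector of $J_n$ with eigenvalue $n$. Since $L(G)$ and $L(\overline{G})$ are symmetric, the orthogonal complement $W:=\mathbf{1}^{\perp}$ is invariant under both. On $W$ one has $J_n\mathbf{v}=\mathbf{0}$, so the displayed identity restricts to
\[
L(\overline{G})\big|_{W}=nI_{W}-L(G)\big|_{W}.
\]
In particular the two restricted operators are simultaneously diagonalizable on $W$, and their eigenvalues there are paired by the involution $\lambda\mapsto n-\lambda$.

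Finally I would carry out the eigenvalue bookkeeping. Choosing an orthonormal eigenbasis of $L(G)$ containing $\mathbf{1}/\lVert\mathbf{1}\rVert$ and removing that vector, the eigenvalues of $L(G)$ on $W$ are exactly $\mu_1(G)\ge\dots\ge\mu_{n-1}(G)$, namely the full spectrum of $L(G)$ with the single eigenvalue $\mu_n(G)=0$ attached to $\mathbf{1}$ excised; the same excision applies to the eigenvalue $\mu_n(\overline{G})=0$ of $L(\overline{G})$. Applying $\lambda\mapsto n-\lambda$ to the decreasing list $\mu_1(G)\ge\dots\ge\mu_{n-1}(G)$ reverses its order, so the decreasingly sorted eigenvalues of $L(\overline{G})$ on $W$ are $n-\mu_{n-1}(G)\ge\dots\ge n-\mu_1(G)$, that is, $\mu_j(\overline{G})=n-\mu_{n-j}(G)$ for $j=1,\dots,n-1$. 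Rewriting with $i=n-j$ yields $\mu_i(G)+\mu_{n-i}(\overline{G})=n$, as claimed. The one point demanding care, and the only place where an error could slip in, is precisely this order reversal together with the correct excision of the $0$ eigenvalue belonging to $\mathbf{1}$; the algebra itself is immediate once the restriction to $W$ is in place.
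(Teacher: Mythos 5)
Your proof is correct, and it is the standard argument for this classical fact: the paper itself gives no proof but cites Mohar's survey, where the identity $L(G)+L(\overline{G})=nI_n-J_n$ restricted to $\mathbf{1}^{\perp}$ is exactly the route taken. Your handling of the two delicate points — excising one copy of the eigenvalue $0$ attached to $\mathbf{1}$ (valid as a multiset statement even when $0$ has higher multiplicity) and the order reversal giving $\mu_j(\overline{G})=n-\mu_{n-j}(G)$ — is accurate.
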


\begin{lemma}\label{Delta}\cite{GM,PH} Let $G$ be a graph of order $n$ with at least one edge. Then
$\mu_1(G)\ge \delta_1(G)+1$  with equality when $G$ is connected if and only if $\delta_1(G)=n-1$.
Moreover, if $G$ is connected  with $n\ge 3$, then $\mu_2(G)\ge \delta_2(G)$ with equality only if,
under reordering the vertices so that 
 $\delta_G(v_i)=\delta_i(G)$ for $i=1,\dots,n$, $G$ satisfies one of the following conditions:

(i) $v_1v_2\notin E(G)$ and $N_G(v_1)=N_G(v_2)$,

(ii) $v_1v_2\in E(G)$, $\delta_1(G)=\delta_2(G)=\frac{n}{2}$ and $N_G(v_1)\cap N_G(v_2)=\emptyset$.
\end{lemma}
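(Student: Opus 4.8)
The plan is to derive both inequalities from well-chosen spanning subgraphs whose Laplacian spectra are explicit, using the edge version of interlacing (Lemma~\ref{cauchy}) for the bounds and the equality case of Weyl's inequality (Lemma~\ref{cw}), together with the complement relation (Lemma~\ref{n-}), for the equality analysis.

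For $\mu_1(G)\ge\delta_1(G)+1$, let $v$ be a vertex with $\delta_G(v)=\delta_1(G)=:\Delta$ and let $H$ be the spanning subgraph whose edges are exactly those incident with $v$; thus $H$ is a star $K_{1,\Delta}$ together with isolated vertices, and $\mu_1(H)=\Delta+1$. Deleting the remaining edges one at a time and applying Lemma~\ref{cauchy} gives $\mu_1(G)\ge\mu_1(H)=\Delta+1$. If $G$ is connected and $\Delta=n-1$, then $v$ is isolated in $\overline{G}$, so $\overline{G}$ is disconnected, $\mu_{n-1}(\overline{G})=0$, and Lemma~\ref{n-} yields $\mu_1(G)=n=\Delta+1$. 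Conversely, assume $G$ is connected and $\mu_1(G)=\Delta+1$. Writing $L(H)=L(G)+(-L(F))$, where $F$ carries the deleted edges, the equality $\mu_1(H)=\mu_1(G)$ is precisely equality in Lemma~\ref{cw} with $i=j=1$ (note $\rho_1(-L(F))=0$ since $L(F)$ is positive semidefinite); hence there is a nonzero $\mathbf{x}$ with $L(G)\mathbf{x}=(\Delta+1)\mathbf{x}$, $L(H)\mathbf{x}=(\Delta+1)\mathbf{x}$ and $L(F)\mathbf{x}=\mathbf{0}$. Since $\Delta+1$ is a simple eigenvalue of $H$ with eigenvector supported on $\{v\}\cup N_G(v)$ (value $-\Delta$ at $v$ and $1$ on $N_G(v)$), reading $L(G)\mathbf{x}=(\Delta+1)\mathbf{x}$ at any $w\notin\{v\}\cup N_G(v)$ forces $w$ to have no neighbor in $N_G(v)$; as $w$ is also not adjacent to $v$, connectedness of $G$ forces $\{v\}\cup N_G(v)=V(G)$, i.e.\ $\Delta=n-1$.

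For $\mu_2(G)\ge\delta_2(G)$ I would run the same scheme with a double star. Let $v_1,v_2$ realize the two largest degrees and let $F$ be the spanning subgraph of all edges incident with $v_1$ or $v_2$. By Lemma~\ref{cauchy}, $\mu_2(G)\ge\mu_2(F)$. The partition of $V(F)$ into the private neighbors of $v_1$, the private neighbors of $v_2$, the common neighbors, $\{v_1\}$, $\{v_2\}$ (and the isolated vertices) is equitable, so apart from the eigenvalues $1$ and $2$ coming from within-class differences, the spectrum of $F$ is governed by a small quotient matrix whose entries depend only on $a=|N_G(v_1)\setminus(N_G(v_2)\cup\{v_2\})|$, $b=|N_G(v_2)\setminus(N_G(v_1)\cup\{v_1\})|$, $c=|N_G(v_1)\cap N_G(v_2)|$ and on whether $v_1v_2\in E(G)$. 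A direct examination of this matrix gives $\mu_2(F)\ge\delta_2(G)$, which proves the bound.

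The main obstacle is the equality characterization, which I would obtain by forcing two equalities simultaneously. First, $\mu_2(F)=\delta_2(G)$ constrains the quotient matrix and singles out two configurations: all neighbors common with $v_1v_2\notin E(G)$, and no common neighbors with $v_1v_2\in E(G)$ and balanced private neighborhoods. Second, $\mu_2(G)=\mu_2(F)$ is again equality in Lemma~\ref{cw}, producing a common eigenvector $\mathbf{x}$ for $L(G)$ and $L(F)$ (eigenvalue $\delta_2(G)$) with $L(G-E(F))\mathbf{x}=\mathbf{0}$; evaluating this last condition on the explicit double-star eigenvector pins the first configuration down to $N_G(v_1)=N_G(v_2)$ (case (i)) and the second to $\delta_1(G)=\delta_2(G)=\frac{n}{2}$ with $N_G(v_1)\cap N_G(v_2)=\emptyset$ (case (ii)). The delicate points are the bookkeeping in the quotient matrix when both common neighbors and the edge $v_1v_2$ are present, and checking that $v_1,v_2$ remain the two highest-degree vertices after the reordering, so that the extracted structure really is (i) or (ii).
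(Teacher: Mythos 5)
The paper does not prove this lemma; it is quoted from \cite{GM,PH}, so your attempt can only be judged on its own terms. Your treatment of the first assertion is correct and complete: comparing $G$ with the spanning star at a maximum-degree vertex $v$ via Lemma~\ref{cauchy} gives $\mu_1(G)\ge\delta_1(G)+1$, and the equality analysis via Lemma~\ref{cw} is sound, because $\Delta+1$ is a simple eigenvalue of the star (as $\Delta\ge 1$), so the common eigenvector must be the explicit star eigenvector, and evaluating $L(G)\mathbf{x}=(\Delta+1)\mathbf{x}$ at a vertex where $\mathbf{x}$ vanishes does force $V(G)=\{v\}\cup N_G(v)$ by connectedness.

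The second half has two genuine gaps. First, the inequality $\mu_2(F)\ge\delta_2(G)$ for the double star $F$ carries the entire content of the bound $\mu_2(G)\ge\delta_2(G)$, and you dispose of it with ``a direct examination of this matrix gives\dots''. The quotient matrix is $5\times 5$ with three integer parameters $a,b,c$ and the adjacency of $v_1v_2$; proving that its second largest eigenvalue is at least $b+c$ (or $b+c+1$) requires an actual argument (e.g.\ the sign of the characteristic polynomial at $\delta_2(G)$ together with a separate bound on $\rho_1$), which you have not supplied and which is not routine. Second, and more seriously, your claimed dichotomy for when $\mu_2(F)=\delta_2(G)$ is false: take $F=P_3$ with $v_1$ the centre and $v_2$ a leaf, i.e.\ $a=1$, $b=c=0$ and $v_1v_2\in E$. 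Then $\mu_2(F)=1=\delta_2(F)$, yet $F$ is in neither of your two configurations, and since $G$ may equal $F$ here, the subsequent condition $L(G-E(F))\mathbf{x}=\mathbf{0}$ is vacuous and cannot rescue the argument. More generally, every star $K_{1,n-1}$ with $n\ge 3$ satisfies $\mu_2=\delta_2=1$ while being excluded by both (i) and (ii) once $v_1$ is required to be the maximum-degree vertex; this shows that the equality characterization cannot be completed along the lines you describe (and indeed that the statement as transcribed, which forces $v_1$ to be one of the two distinguished vertices, already fails for stars --- the Pan--Hou conditions concern a pair of vertices both of degree $\delta_2(G)$, not necessarily including $v_1$). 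Your closing remark about ``checking that $v_1,v_2$ remain the two highest-degree vertices'' is pointing at exactly this unresolved issue.
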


From Lemmas \ref{n-} and \ref{Delta}, we have

\begin{corollary}\label{delta}
Let $G$ be a graph of order $n$ that is not a complete graph. Then $\mu_{n-2}(G)\le \delta_{n-1}(G)+1$ with equality if and only if, under under reordering the vertices so that
$\delta_G(v_i)=\delta_i(G)$ for $i=1,\dots,n$,
 $G$ satisfies one of the following conditions:

(i) $v_{n-1}v_n\in E(G)$ and $N_G(v_{n-1})\setminus\{v_n\}=N_G(v_n)\setminus\{v_{n-1}\}$,

(ii) $v_{n-1}v_n\notin E(G)$, $\delta_{n-1}(G)=\delta_n(G)=\frac{n-2}{2}$ and $N_G(v_{n-1})\cap N_G(v_n)=\emptyset$.
\end{corollary}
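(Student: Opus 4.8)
The plan is to pass to the complement and import the degree bound of Lemma \ref{Delta} from there. Assume $n\ge 3$, so that the index $n-2$ is meaningful and $1\le n-2\le n-1$. Then Lemma \ref{n-} with $i=n-2$ gives the exact identity
\[
\mu_{n-2}(G)=n-\mu_2(\overline{G}),
\]
so an upper bound on $\mu_{n-2}(G)$ is equivalent to a lower bound on $\mu_2(\overline{G})$, which is precisely what the second part of Lemma \ref{Delta} supplies: $\mu_2(\overline{G})\ge \delta_2(\overline{G})$. Since $\delta_{\overline{G}}(v)=n-1-\delta_G(v)$ for every vertex $v$, passing to the complement reverses the degree order, so the second-largest degree of $\overline{G}$ is $\delta_2(\overline{G})=n-1-\delta_{n-1}(G)$. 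Substituting yields
\[
\mu_{n-2}(G)=n-\mu_2(\overline{G})\le n-\bigl(n-1-\delta_{n-1}(G)\bigr)=\delta_{n-1}(G)+1,
\]
the desired inequality.

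For the equality discussion I would apply the equality part of Lemma \ref{Delta} to $\overline{G}$ and translate every condition back through the complement. Reordering the vertices of $\overline{G}$ by non-increasing degree is the same as reordering the vertices of $G$ by non-decreasing degree; with the labelling $\delta_G(v_i)=\delta_i(G)$, the two highest-degree vertices of $\overline{G}$ are $v_n$ and $v_{n-1}$. Using $N_{\overline{G}}(v)=V(G)\setminus(\{v\}\cup N_G(v))$, one checks that ``$v_nv_{n-1}\notin E(\overline{G})$'' becomes ``$v_{n-1}v_n\in E(G)$'' and ``$N_{\overline{G}}(v_n)=N_{\overline{G}}(v_{n-1})$'' becomes ``$N_G(v_{n-1})\setminus\{v_n\}=N_G(v_n)\setminus\{v_{n-1}\}$'', which is case (i). For case (ii), ``$v_nv_{n-1}\in E(\overline{G})$'' becomes ``$v_{n-1}v_n\notin E(G)$'', the condition $\delta_1(\overline{G})=\delta_2(\overline{G})=\frac n2$ becomes $\delta_{n-1}(G)=\delta_n(G)=\frac{n-2}2$, and, since both neighborhoods then have size $\frac{n-2}2$ inside the $(n-2)$-set $V(G)\setminus\{v_{n-1},v_n\}$, the condition $N_{\overline{G}}(v_n)\cap N_{\overline{G}}(v_{n-1})=\emptyset$ is equivalent to $N_G(v_{n-1})\cap N_G(v_n)=\emptyset$ by an inclusion–exclusion count.

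The step requiring the most care is the hypothesis under which $\mu_2(\overline{G})\ge\delta_2(\overline{G})$ is actually available: Lemma \ref{Delta} guarantees this only when $\overline{G}$ is connected and has at least three vertices, whereas ``$G$ not complete'' forces only that $\overline{G}$ has an edge. In fact the bound can fail when $\overline{G}$ is disconnected (for instance when $G=K_n-e$), so I would either phrase the corollary for $\overline{G}$ connected or invoke the fact that in the intended applications $G$ has diameter at least $3$, which forces $\overline{G}$ connected. A second, milder point is the direction of the equality claim: Lemma \ref{Delta} states its structural conditions as necessary for $\mu_2=\delta_2$, so the forward implication of the ``if and only if'' is immediate from the translation above, while the reverse implication should be confirmed by a direct verification that graphs of type (i) or (ii) indeed attain $\mu_2(\overline{G})=\delta_2(\overline{G})$, equivalently $\mu_{n-2}(G)=\delta_{n-1}(G)+1$.
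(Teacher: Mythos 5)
Your derivation is exactly the paper's intended one: the paper offers no written proof beyond the remark that the corollary follows ``from Lemmas \ref{n-} and \ref{Delta},'' i.e., from $\mu_{n-2}(G)=n-\mu_2(\overline{G})$ combined with $\mu_2(\overline{G})\ge\delta_2(\overline{G})=n-1-\delta_{n-1}(G)$, which is precisely what you carry out, including the translation of the equality conditions through the complement. Your two caveats are legitimate criticisms of the corollary as stated rather than gaps in your argument: Lemma \ref{Delta} requires $\overline{G}$ connected, and without that the bound really can fail (e.g.\ $G=K_n-e$ has $\mu_{n-2}(G)=n$ but $\delta_{n-1}(G)+1=n-1$), while Lemma \ref{Delta} asserts only the necessity of the structural conditions, so the corollary's ``if and only if'' should read ``only if''; in the paper's sole application (Case 2 of Theorem \ref{2d-1}) the diameter is at least $5$, so $\overline{G}$ is connected and only the necessity direction is used.
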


A semi-regular bipartite graph is a bipartite graph
in which vertices in the same partite set have the same degree.
For a semi-regular bipartite graph $F$, let $F^+=F+\{uv: N_F(u)=N_F(v), u,v\in V(F)\}$.

\begin{lemma}\label{mu1}\cite{YLT}
For a graph $G$,  $\mu_1(G)\le \max\{\delta_G(u)+\delta_G(v)-|N_G(u)\cap N_G(v)|:uv\in E(G)\}$ with equality if and only if  for some semi-regular bipartite graph  $F$, $G\cong F^+$.
\end{lemma}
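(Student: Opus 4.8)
The plan is to establish the inequality by a Rayleigh-type computation with an eigenvector of $L(G)$ localized at a carefully chosen edge, exploiting an exact telescoping cancellation at the common neighbors, and then to treat the equality case through the rigidity this computation forces on the extremal eigenvector.

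We may assume $G$ has at least one edge, so $\mu_1(G)>0$. Let $\mathbf{x}$ be a real eigenvector with $L(G)\mathbf{x}=\mu_1(G)\mathbf{x}$. Rewriting the $u$-th coordinate of the eigenequation gives
\[
\mu_1(G)\,x_u=\sum_{w\in N_G(u)}(x_u-x_w).
\]
Since $\mu_1(G)>0$, the vector $\mathbf{x}$ lies outside the kernel of $L(G)$, so some edge difference is nonzero. First I would select an edge $uv\in E(G)$ maximizing $|x_a-x_b|$ over all edges $ab$ and, after swapping the endpoints if needed, set $M:=x_u-x_v=\max_{ab\in E(G)}|x_a-x_b|>0$. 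Subtracting the eigenequations at $u$ and at $v$ yields
\[
\mu_1(G)\,M=\sum_{w\in N_G(u)}(x_u-x_w)+\sum_{w\in N_G(v)}(x_w-x_v).
\]

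The heart of the argument is the contribution of the common neighbors. Let $S=N_G(u)\cap N_G(v)$ and $s=|S|$. The summand $w=v$ in the first sum and the summand $w=u$ in the second each equal $M$; for each $w\in S$ the two sums jointly contribute $(x_u-x_w)+(x_w-x_v)=M$ exactly, the shared value $x_w$ cancelling. Each of the remaining $(\delta_G(u)-s-1)+(\delta_G(v)-s-1)$ summands is a difference across an edge and so is at most $M$. Summing the coefficients,
\[
2+s+(\delta_G(u)-s-1)+(\delta_G(v)-s-1)=\delta_G(u)+\delta_G(v)-s,
\]
gives $\mu_1(G)\,M\le\big(\delta_G(u)+\delta_G(v)-|N_G(u)\cap N_G(v)|\big)M$, and dividing by $M>0$ and passing to the maximum over all edges establishes the inequality. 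The improvement over the classical bound $\delta_G(u)+\delta_G(v)$ comes solely from the exact cancellation at each common neighbor, which contributes one factor $M$ rather than two.

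I expect the equality characterization to be the main obstacle. Tracing the inequalities above, $\mu_1(G)$ equals the stated maximum only if the chosen edge attains $\max_{ab\in E(G)}\{\delta_G(a)+\delta_G(b)-|N_G(a)\cap N_G(b)|\}$ and every bounded summand equals $M$; the latter forces $x_w=x_v$ for all $w\in N_G(u)\setminus(S\cup\{v\})$ and $x_w=x_u$ for all $w\in N_G(v)\setminus(S\cup\{u\})$. The plan is to propagate these two-valued constraints through the component to show that $\mathbf{x}$ assumes only the values $x_u$ and $x_v$, that the supports of these two values form the colour classes of a bipartite graph on which $\mathbf{x}$ alternates, and that each class is degree-regular (semi-regularity). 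Edges joining two vertices of equal eigenvector value contribute nothing to $L(G)\mathbf{x}$, so they may be added or removed without disturbing the top eigenvalue; these are precisely the edges created in forming $F^+$ from a semi-regular bipartite graph $F$. Conversely, one would exhibit the alternating two-valued vector explicitly as an eigenvector of $L(F^+)$ to confirm that equality is attained. Showing that this propagation terminates exactly at the graphs $F^+$, and excluding any competing extremal configuration, is the step I anticipate demanding the most care.
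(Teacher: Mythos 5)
The paper offers no proof of this lemma --- it is quoted from [YLT] (the inequality itself is Das's bound). Your derivation of the inequality is correct and complete, and it is essentially the classical argument: subtract the eigenequations at the endpoints of an edge maximizing $|x_a-x_b|$, note the exact cancellation $(x_u-x_w)+(x_w-x_v)=M$ at each common neighbour, and bound the remaining $(\delta_G(u)-s-1)+(\delta_G(v)-s-1)$ edge differences by $M$. The coefficient count $\delta_G(u)+\delta_G(v)-s$ is right, and the preliminary observation that some edge difference is nonzero (via $\mathbf{x}^\top L(G)\mathbf{x}=\sum_{ab\in E(G)}(x_a-x_b)^2>0$) is sound.

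The equality characterization, however, is left as a plan rather than a proof, and that is where the actual content of [YLT] lies; it is also the part this paper needs, since in Theorem \ref{n-5} the lemma is used to conclude a \emph{strict} inequality $\mu_1(H_4)<5$ precisely because $H_4$ is not of the form $F^+$. Three concrete gaps: (1) your local analysis at the extremal edge says nothing about $w\in N_G(u)\cap N_G(v)$ beyond $x_w\in[x_v,x_u]$, because the joint contribution of such a $w$ is exactly $M$ for \emph{any} value of $x_w$; so the claim that $\mathbf{x}$ is globally two-valued does not follow from "propagating" the same argument, as other edges need not attain the maximal difference $M$ and hence yield no rigidity. (2) Semi-regularity of the bipartite skeleton and the identification of the equal-value edges with $\{uv:N_F(u)=N_F(v)\}$ require combining the eigenequation at every vertex with the requirement that the extremal edge attain the global maximum of $\delta_G(a)+\delta_G(b)-|N_G(a)\cap N_G(b)|$; none of this is carried out. (3) The converse direction is not automatic either: passing from $F$ to $F^+$ changes both degrees and common neighbourhoods, so one must verify that the alternating vector realizes the stated maximum \emph{for $F^+$}, not for $F$. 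As written, the proposal proves the inequality but not the equality statement.
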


Let $G$ be a graph of order $n$.
Denote by $\kappa(G)$ the connectivity of $G$. By the well-known Whitney's inequality, $\kappa(G)\le \delta_n(G)$.
For two vertex disjoint  graphs $G_1$ and $G_2$, their join  is the graph $G_1\cup G_2+\{uv: u\in V(G_1), v\in V(G_2)\}$.

\begin{lemma}\label{mun-1}\cite{F,KMNS}
Let $G$ be a connected graph of order $n$ that is not complete. Then $\mu_{n-1}(G)\le\kappa(G)$ with equality if and only if $G$ is a join of two graphs $G_1$ and $G_2$, where $G_1$ is a disconnected graph of order $n-\kappa(G)$ and $G_2$ is a graph of order $\kappa(G)$ with $\mu_{\kappa(G)-1}(G_2)\ge 2\kappa(G)-n$.
\end{lemma}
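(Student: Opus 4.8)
The plan is to prove the bound by testing the variational (Courant--Fischer) characterization of $\mu_{n-1}(G)$ against one explicit vector built from a minimum vertex cut, and then to settle both directions of the equality characterization through the Laplacian spectrum of a join. First, let $S$ be a minimum vertex cut, so $|S|=\kappa:=\kappa(G)$ and $G-S$ decomposes as $A\cup B$ with $A,B$ nonempty and no edges between them; write $a=|A|$, $b=|B|$, so $a+b=n-\kappa$. I would define $\mathbf{x}$ by $\mathbf{x}(v)=b$ on $A$, $\mathbf{x}(v)=-a$ on $B$, and $\mathbf{x}(v)=0$ on $S$, so that $\mathbf{x}\perp\mathbf{1}$. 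Using $\mathbf{x}^\top L(G)\mathbf{x}=\sum_{uv\in E(G)}(x_u-x_v)^2$, only the $A$--$S$ and $B$--$S$ edges contribute, giving $\mathbf{x}^\top L(G)\mathbf{x}=e_{AS}b^2+e_{BS}a^2$ with $e_{AS},e_{BS}$ the respective edge counts, while $\mathbf{x}^\top\mathbf{x}=ab^2+ba^2=ab(n-\kappa)$. Since each vertex of $A$ (resp. $B$) has at most $|S|=\kappa$ neighbours in $S$, we have $e_{AS}\le a\kappa$ and $e_{BS}\le b\kappa$, whence the Rayleigh quotient is at most $\kappa(ab^2+a^2b)/\big(ab(n-\kappa)\big)=\kappa$, and $\mu_{n-1}(G)\le\kappa$ follows.

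For the equality analysis I would first record the join spectrum: for $G=G_1\vee G_2$ with orders $n_1,n_2$ and $n=n_1+n_2$, the Laplacian eigenvalues are $\{0,n\}\cup\{n_2+\mu_i(G_1):1\le i\le n_1-1\}\cup\{n_1+\mu_j(G_2):1\le j\le n_2-1\}$. This follows by block-diagonalizing $L(G_1\vee G_2)$ on the mean-zero vectors supported on each side together with the two-dimensional span of the two side indicators (where $L$ reduces to a matrix with trace $n$ and determinant $0$), or alternatively via Lemma \ref{n-} applied to $\overline{G_1\vee G_2}=\overline{G_1}\cup\overline{G_2}$ and the fact that the Laplacian spectrum of a disjoint union is the union of the spectra.

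For the forward direction, suppose $\mu_{n-1}(G)=\kappa$. Since $\mathbf{x}\perp\mathbf{1}$, its Rayleigh quotient is $\ge\mu_{n-1}(G)=\kappa$, yet we showed it is $\le\kappa$; hence equality holds throughout. As $a^2,b^2>0$, this forces $e_{AS}=a\kappa$ and $e_{BS}=b\kappa$ separately, so every vertex of $A\cup B$ is adjacent to all of $S$. Therefore $G=G_1\vee G_2$ with $G_1=G[V\setminus S]$ disconnected of order $n-\kappa$ and $G_2=G[S]$ of order $\kappa$. Reading the join spectrum, the value $\kappa+\mu_{n-\kappa-1}(G_1)=\kappa$ is an eigenvalue (because $G_1$ is disconnected), and for it to be the \emph{second} smallest we need the smallest eigenvalue coming from $G_2$, namely $(n-\kappa)+\mu_{\kappa-1}(G_2)$, to satisfy $(n-\kappa)+\mu_{\kappa-1}(G_2)\ge\kappa$, i.e. $\mu_{\kappa-1}(G_2)\ge 2\kappa-n$ (the case $\kappa=1$ being degenerate, with $G_2=K_1$ and the condition vacuous). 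The backward direction is the same computation read in reverse: given such a join, the condition $\mu_{\kappa-1}(G_2)\ge 2\kappa-n$ makes every positive eigenvalue at least $\kappa$, while the eigenvalue $\kappa$ is attained, so the smallest positive eigenvalue is $\kappa=\kappa(G)$, giving $\mu_{n-1}(G)=\kappa(G)$.

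The main obstacle will be the equality analysis rather than the inequality: one must argue carefully that tightness in the Rayleigh bound pins down the \emph{complete} join $G[S]\vee G[V\setminus S]$ (and not merely the aggregate edge counts), and then verify that correctly identifying the second smallest eigenvalue from the join spectrum yields exactly the threshold $\mu_{\kappa-1}(G_2)\ge 2\kappa-n$ --- in particular confirming that neither the eigenvalues contributed by $G_1$ nor the pair $\{0,n\}$ can undercut $\kappa$, so that the only binding constraint is the one coming from $G_2$.
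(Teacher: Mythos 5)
This lemma is quoted in the paper from \cite{F,KMNS} without proof, so there is no in-paper argument to compare against; judged on its own, your proposal is correct and essentially reconstructs the standard proof. The inequality via the test vector $\mathbf{x}$ equal to $b$ on $A$, $-a$ on $B$ and $0$ on the minimum cut $S$ is sound: $\mathbf{x}\perp\mathbf{1}$, only the $A$--$S$ and $B$--$S$ edges contribute, and the bounds $e_{AS}\le a\kappa$, $e_{BS}\le b\kappa$ give a Rayleigh quotient at most $\kappa$; since $\mathbf{1}$ is an eigenvector for the smallest eigenvalue $0$, Courant--Fischer gives $\mu_{n-1}(G)=\min\{R(\mathbf{y}):\mathbf{y}\perp\mathbf{1},\ \mathbf{y}\ne\mathbf{0}\}\le\kappa$. (Fiedler's original route is different --- he shows deleting a vertex lowers the algebraic connectivity by at most $1$ and iterates over the cut --- but your variational argument is a well-known and cleaner alternative, and it has the advantage of feeding directly into the equality analysis.) The equality direction is also handled correctly: tightness forces $e_{AS}=a\kappa$ and $e_{BS}=b\kappa$ simultaneously because $a^2,b^2>0$, and since no vertex can have more than $|S|=\kappa$ neighbours in $S$, this does pin down the complete join $G=G[V\setminus S]\vee G[S]$, not merely the aggregate counts; the join spectrum $\{0,n\}\cup\{\kappa+\mu_i(G_1)\}\cup\{(n-\kappa)+\mu_j(G_2)\}$ then shows $\kappa$ is attained (as $G_1$ is disconnected, $\mu_{n-\kappa-1}(G_1)=0$) and that the only constraint which can undercut $\kappa$ is $(n-\kappa)+\mu_{\kappa-1}(G_2)\ge\kappa$, i.e.\ $\mu_{\kappa-1}(G_2)\ge 2\kappa-n$, with the converse read off the same spectrum. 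This is in substance the argument of \cite{KMNS}. Two minor points you should make explicit if writing this up: the eigenvalue $n$ in the join spectrum never interferes because $\kappa\le n-2$ for a non-complete graph, and in the backward direction the equality $\kappa(G)=|V(G_2)|$ is part of the hypothesis (the join structure only gives $\kappa(G)\le|V(G_2)|$ a priori).
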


Let $G$ be a connected graph and $P$ be a diametral path of $G$. For vertex $z$ of $G$ outside $P$,   we denote by $\Gamma_{G,P}(z)$ the set of neighbors of $z$ on $P$, that is, $\Gamma_{G,P}(z)=N_G(z)\cap V(P)$.

We say two matrices $A$ and $B$  are permutational similar if $A=QBQ^\top$ for some
permutation matrix $Q$.

\section{Proof of Theorem \ref{x}}

Theorem \ref{x} follows from Theorems \ref{d23} and \ref{normal}.

\begin{theorem}\label{d23}
Let $G$ be a connected graph of order $n$ with diameter $d$, where $d\le n-4$. If $d=2,3,4$, then  $m_G[n-d,n]\le n-d+1$.
\end{theorem}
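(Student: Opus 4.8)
The plan is to count Laplacian eigenvalues lying \emph{strictly below} $n-d$. Since every eigenvalue is at most $n$, we have $m_G[n-d,n]=\big|\{i:\mu_i(G)\ge n-d\}\big|=n-\big|\{i:\mu_i(G)<n-d\}\big|$, so the bound $m_G[n-d,n]\le n-d+1$ is equivalent to exhibiting at least $d-1$ eigenvalues below $n-d$; that is, $1,2,3$ such eigenvalues for $d=2,3,4$ respectively. One eigenvalue is free of charge, namely $\mu_n(G)=0<n-d$. Beyond this the only inputs I would use are the upper bounds $\mu_{n-1}(G)\le\kappa(G)$ (Lemma \ref{mun-1}, together with Whitney's $\kappa(G)\le\delta_n(G)$) and $\mu_{n-2}(G)\le\delta_{n-1}(G)+1$ (Corollary \ref{delta}), fed by a degree bound coming from the diameter: fixing a diametral path with endpoint $w$ and partitioning $V(G)$ into the distance levels $L_0=\{w\},L_1,\dots,L_d$ from $w$ (all nonempty), we have $N_G(w)=L_1$, whence $\delta_G(w)=|L_1|=n-\sum_{i\ne1}|L_i|\le n-d$, so $\delta_n(G)\le n-d$.

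The case $d=2$ is immediate, since $\mu_n(G)=0<n-2$ already gives $m_G[n-2,n]\le n-1$. For $d=3$ I would chain the above into $\mu_{n-1}(G)\le\kappa(G)\le\delta_n(G)\le n-3$. If equality held throughout, then $\mu_{n-1}(G)=\kappa(G)$, so by the equality clause of Lemma \ref{mun-1} the graph $G$ is a join $G_1\vee G_2$ of two nonempty graphs; but every such join has diameter at most $2$, contradicting $d=3$. Hence $\mu_{n-1}(G)<n-3$, and together with $\mu_n(G)=0$ this produces two eigenvalues below $n-3$, i.e. $m_G[n-3,n]\le n-2$.

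For $d=4$ the identical join argument gives $\mu_{n-1}(G)<n-4$, so it remains only to produce a third eigenvalue below $n-4$, namely $\mu_{n-2}(G)<n-4$. By Corollary \ref{delta} it suffices to prove $\delta_{n-1}(G)\le n-6$, which then yields $\mu_{n-2}(G)\le n-5<n-4$. The key observation is that a large second-smallest degree forces a small diameter: if $\delta_{n-1}(G)\ge n-5$, then at least $n-1$ vertices have degree $\ge n-5$, and for any two nonadjacent such vertices $x,y$ we have $|N_G(x)\cap N_G(y)|\ge 2(n-5)-(n-2)=n-8\ge1$ once $n\ge9$, so they share a common neighbour and lie at distance at most $2$; the single remaining vertex is then within distance $3$ of everything, giving $\operatorname{diam}(G)\le3$. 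Contrapositively, $d=4$ with $n\ge9$ forces $\delta_{n-1}(G)\le n-6$, finishing this range.

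The remaining obstacle is the boundary value $n=8$ (forced by $d\le n-4$), where the argument above degrades: it only yields $\delta_{n-1}(G)=\delta_7(G)\le3$, and Corollary \ref{delta} only gives $\mu_6(G)\le4=n-4$. I expect this to be the hard part, since one must rule out the equality $\mu_6(G)=4$. The route I would take is to invoke the equality characterization of Corollary \ref{delta}: $\mu_6(G)=\delta_7(G)+1$ forces $\delta_7(G)=3$ together with a twin-type condition on the two lowest-degree vertices $v_7,v_8$ (either $v_7v_8\in E(G)$ with $N_G(v_7)\setminus\{v_8\}=N_G(v_8)\setminus\{v_7\}$, or $v_7v_8\notin E(G)$ with $\delta_7=\delta_8=3$ and disjoint neighbourhoods). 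A short degree count then shows each such configuration is incompatible with $d=4$: the closed neighbourhood of $\{v_7,v_8\}$ already occupies four vertices, so a vertex at distance $4$ from $v_7$ would be forced to have all of its $\ge3$ neighbours in an outer level too small to accommodate them without either disconnecting $G$ or collapsing its diameter to at most $3$. Hence $\mu_6(G)<4$. As an alternative for this finite case, since $n-4=4$ one can study the eigenvalue $4$ directly along the induced diametral path via the eigenvector recursion of Lemma \ref{cha}, or simply verify $\mu_6(G)<4$ on the short list of eight-vertex, diameter-four graphs satisfying the equality conditions.
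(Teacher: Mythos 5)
Your argument is correct in outline but takes a genuinely different route from the paper's, and it has one unfinished step. For $d=3$ the paper passes to the complement ($v_1v_3,v_1v_4\in E(\overline{G})$ gives $\delta_1(\overline{G})\ge 2$, and Lemmas \ref{Delta} and \ref{n-} then yield $\mu_{n-1}(G)<n-3$), whereas you chain $\mu_{n-1}(G)\le\kappa(G)\le\delta_n(G)\le n-d$ and break equality with the join characterization in Lemma \ref{mun-1}; this is valid (it is the same device the paper deploys later in Theorem \ref{2d-2}) and arguably cleaner. For $d=4$ the paper again works in $\overline{G}$: it takes three vertices off the diametral path, splits into cases according to how many of them are adjacent to $v_1$, and in each case either gets $\delta_2(\overline{G})\ge 5$ or gets $\delta_2(\overline{G})=4$ while violating the equality conditions of Lemma \ref{Delta}; this handles all $n\ge 8$ uniformly. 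Your global count ($\delta_{n-1}(G)\ge n-5$ forces all but at most one vertex to be pairwise at distance at most $2$, hence diameter at most $3$) is precisely the complement-dual assertion $\delta_2(\overline{G})\ge 5$, obtained with no equality analysis at all --- but only for $n\ge 9$. What each approach buys: yours isolates the whole difficulty in a single finite case, while the paper's avoids that case at the cost of a longer adjacency case analysis.

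The genuine gap is the case $n=8$, $d=4$. There you only obtain $\delta_7(G)\le 3$, hence $\mu_6(G)\le 4=n-4$ by Corollary \ref{delta}, and you must exclude equality; what you offer is a heuristic, not a proof. In particular, your stated reason (a vertex at distance $4$ from $v_7$ cannot house its $\ge 3$ neighbours in the outer levels) only treats diametral pairs containing $v_7$ or $v_8$; you must also rule out a distance-$4$ pair of the form $\{a,y\}$ with $a\in N_G(v_7)$, and pairs among the four vertices adjacent to neither $v_7$ nor $v_8$, and the final contradiction (disconnection) has to be derived rather than asserted. The plan does close: in configuration (i) of Corollary \ref{delta} the adjacent twins $v_7,v_8$ have exactly two further common neighbours $a,b$, every candidate distance-$4$ pair forces $N_G(a)\cup N_G(b)\subseteq\{v_7,v_8,a,b\}$ and hence disconnects $G$, while a distance-$4$ pair among the remaining four vertices is impossible because two vertices of degree at least $3$ with neighbourhoods inside a common $4$-set must share a neighbour; in configuration (ii) the sets $N_G(v_7)$ and $N_G(v_8)$ partition the other six vertices and any distance-$4$ pair forces the absence of all edges between these two sets, again disconnecting $G$. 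Write this out (or simply note that the paper's complement argument already covers $n=8$); as submitted, the theorem is not proved in this boundary case.
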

\begin{proof}
The result for $d=2$ is trivial as $\mu_n(G)=0$.

Suppose that $d=3$. Let $P:=v_1\dots v_4$ be the diametral path of $G$. Then $v_1v_3,v_1v_4\in E(\overline{G})$, so $\delta_1(\overline{G})\ge 2$. By Lemma \ref{Delta}, $\mu_1(\overline{G})> \delta_1(\overline{G})+1=3$ as $n\ge 4$. So  by Lemma \ref{n-}, $\mu_{n-1}(G)=n-\mu_1(\overline{G})<n-3$. Thus, $m_G[n-3,n]\le n-2$.

Now suppose  that $d=4$.
It suffices to show that $\mu_{n-2}(G)<n-4$, or $\mu_2(\overline{G})>4$ by Lemma \ref{n-}.

Let $P:=v_1v_2v_3v_4v_5$ be the diametral path of $G$. Then $\overline{G}[\{v_1,v_2,v_3,v_4,v_5\}]$ is $H_0$ in Fig. \ref{P5c}.

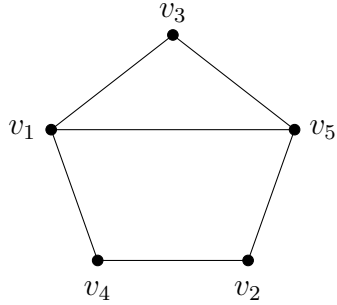
\begin{figure}[htbp]
\centering
\begin{tikzpicture}
\draw  [black](0,0)--(2,0);
\filldraw [black] (0,0) circle (2pt);
\filldraw [black] (2,0) circle (2pt);
\filldraw [black] (1,3) circle (2pt);
\filldraw [black] (-0.618,1.74) circle (2pt);
\filldraw [black] (2.618,1.74) circle (2pt);
\draw  [black](0,0)--(-0.618,1.74)--(1,3);
\draw  [black](2,0)--(2.618,1.74)--(1,3);
\draw  [black](-0.618,1.74)--(2.618,1.74);
\node at (0,-0.4) {$v_4$};
\node at (2, -0.4) {$v_2$};
\node at (-1,1.74) {$v_1$};
\node at (3,1.74) {$v_5$};
\node at (1,3.3) {$v_3$};
\end{tikzpicture}
\caption{The graph $H_0$.}
\label{P5c}
\end{figure}


As $n\ge 8$, there are at least three vertices outside $P$ in $G$.  Let  $u$, $v$ and $w$ be three such vertices.

Suppose first that $u$, $v$ and $w$ are all adjacent to $v_1$ in $G$. As  $P$ is a diametral path of $G$,  none of $u$, $v$ and $w$ is  adjacent to  $v_4$ or $v_5$ in $G$, so all of them are adjacent to both $v_4$ and $v_5$ in $\overline{G}$.   Thus  $\delta_1(\overline{G})\ge \delta_{\overline{G}}(v_5)\ge 6$ and $\delta_2(\overline{G})\ge \delta_{\overline{G}}(v_4)\ge 5$.
By Lemma \ref{Delta}, $\mu_2(\overline{G})\ge \delta_2(\overline{G})\ge 5>4$.

Suppose next that exactly two of $u$, $v$ and $w$, say $u$ and $v$, are adjacent to $v_1$ in $G$. Then $uv_5,vv_5\notin E(G)$, so $uv_5,vv_5,wv_1\in E(\overline{G})$, implying that  $\delta_{\overline{G}}(v_5)\ge 5$ and $\delta_{\overline{G}}(v_1)\ge 4$.
Thus $\delta_2(\overline{G})\ge 5$ and $\delta_2(\overline{G})\ge 4$.
By Lemma \ref{Delta}, $\mu_2(\overline{G})\ge \delta_2(\overline{G})\ge 4$. Suppose that $\mu_2(\overline{G})=4$. Then $\delta_{\overline{G}}(v_5)=\delta_1(\overline{G})\ge 5$ and
 $\delta_{\overline{G}}(v_1)=\delta_2(\overline{G})=4$.
 Note that $v_1$ and $v_5$ are adjacent in $\overline{G}$ with a common neighbor $v_3$. By Lemma \ref{Delta}, this is impossible. It thus follows that $\mu_2(\overline{G})> 4$.

Now, suppose that  exactly one of $u$, $v$ and $w$, say  $u$, is adjacent to $v_1$ in $G$. Then $uv_5\notin E(G)$, so $uv_5,vv_1,wv_1\in E(\overline{G})$. Thus
 $\delta_1(\overline{G})\ge \delta_{\overline{G}}(v_1)\ge 5$
and
  $\delta_2(\overline{G})\ge \delta_{\overline{G}}(v_5)\ge 4$.
As $v_1$ and $v_5$ are adjacent in $\overline{G}$ and with a common neighbor $v_3$,  one gets $\mu_2(\overline{G})>\delta_2(\overline{G})\ge 4$ by Lemma \ref{Delta}.

Finally, suppose  that none of $u$, $v$ and $w$ is adjacent to $v_1$ in $G$. If two of them, say $u$ and $v$,  are adjacent to $v_2$ in $G$,  then $uv_5,vv_5\in E(\overline{G})$, implying that $\delta_2(\overline{G})\ge 5$, so we have by Lemma \ref{Delta} that $\mu_2(\overline{G})>4$.
If  at most one of $u$, $v$ and $w$ is adjacent to $v_2$ in $G$,  then we may assume that $uv_2,vv_2\not\in E(G)$, i.e.,  $uv_2,vv_2\in E(\overline{G})$, so $\delta_2(\overline{G})\ge 4$ and
we have $\mu_2(\overline{G})>4$ by Lemma \ref{Delta}.
\end{proof}

It is evident that $m_{K_n-e}[n,n]=n-2$.
Note that  $G_{n,3,2}$ ($G_{n,4,3}$, respectively) is an $n$-vertex graph with diameter $3$ ($4$, respectively). As $K_{n-1}-e$ is a subgraph of $G_{n,3,2}$, we have $\mu_{n-2}(G_{n,3,2})\ge\mu_{n-2}(K_{n-1}-e)=n-3$, so by Theorem \ref{d23}, $m_{G_{n,3,2}}[n-3,n]=n-2$.
From \cite[Proposition 1]{XZ}, $\mu_{n-3}(G_{n,4,3})>n-3$, so by Theorem \ref{d23} again, $m_{G_{n,4,3}}[n-4,n]=n-3$. Thus,  the bound in Theorem \ref{d23} is tight.

\begin{theorem}\label{normal}
Let $G$ be a connected graph of order $n$ with diameter $d$, where $5\le d\le n-4$.  Then $m_G[n-d,n]\le n-d+2$.
\end{theorem}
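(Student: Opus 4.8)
The plan is to pass to the complement. By Lemma \ref{n-}, $\mu_{n-d+3}(G)=n-\mu_{d-3}(\overline{G})$, which is legitimate because $5\le d\le n-4$ forces $1\le n-d+3\le n-1$ and $1\le d-3$. Since $m_G[n-d,n]\le n-d+2$ is equivalent to $\mu_{n-d+3}(G)<n-d$, the whole theorem reduces to the single inequality
\[
\mu_{d-3}(\overline{G})>d.
\]
So I would work entirely in $\overline{G}$ and try to exhibit $d-3$ Laplacian eigenvalues of $\overline{G}$ exceeding $d$.

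The engine is the diametral path together with interlacing. Let $P=u_1\cdots u_{d+1}$ be a diametral path of $G$; since $P$ is induced, $\overline{G}$ restricted to $V(P)$ is $\overline{P_{d+1}}$, and the principal submatrix $B$ of $L(\overline{G})$ indexed by $V(P)$ equals $L(\overline{P_{d+1}})+D^*$, where $D^*=\mathrm{diag}(s_1,\dots,s_{d+1})$ and $s_i$ is the number of off-path neighbours of $u_i$ in $\overline{G}$. Two facts drive the argument. First, applying Lemma \ref{n-} to $P_{d+1}$ and using $\mu_1(P_{d+1})<4$ (Lemma \ref{Hpq}(i)) gives $\mu_j(\overline{P_{d+1}})=(d+1)-\mu_{d+1-j}(P_{d+1})>d-3$ for $j=1,\dots,d$. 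Second, Lemma \ref{interlacing} gives $\mu_i(\overline{G})\ge\rho_i(B)$. Thus I only need $\rho_{d-3}(B)>d$, and the diagonal shift $D^*$ is precisely what must supply the missing $+3$.

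Call $u_i$ \emph{heavy} if $s_i\le 2$, i.e. $u_i$ has at most two non-neighbours among the $n-d-1$ off-path vertices. If there are no heavy vertices then $D^*\succeq 3I$, so $B\succeq L(\overline{P_{d+1}})+3I$ and eigenvalue monotonicity yields $\rho_{d-3}(B)\ge\mu_{d-3}(\overline{P_{d+1}})+3>d$, finishing. More generally I would write $B=(L(\overline{P_{d+1}})+3I)+(D^*-3I)$ and apply the lower form of Weyl's inequality (Lemma \ref{cw} applied to the negated matrices): since $D^*-3I$ is diagonal with exactly $k:=\#\{\text{heavy }u_i\}$ negative eigenvalues, choosing indices $q=d+1-k$ and $p=d-3+k$ gives $\rho_{d-3}(B)\ge\rho_p(L(\overline{P_{d+1}})+3I)+\rho_q(D^*-3I)\ge\mu_{d-3+k}(\overline{P_{d+1}})+3$, which still exceeds $d$ as long as $k\le 3$. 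To bound $k$ I would use the elementary observation that the neighbours of any off-path vertex $z$ lie in a window of three consecutive $u_i$'s: if $zu_a,zu_b\in E(G)$ then $b-a=d_G(u_a,u_b)\le 2$. Hence two heavy vertices whose indices differ by at least $3$ have disjoint off-path neighbourhoods, each of size $\ge n-d-3$; when $n-d\ge 6$ this forces $2(n-d-3)\le n-d-1$, a contradiction, so all heavy vertices lie in a single window of three consecutive indices and $k\le 3$. This settles the theorem for all $5\le d\le n-6$.

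The remaining obstacle, and the genuinely delicate part, is $d\in\{n-5,n-4\}$, where the counting no longer caps the number of heavy vertices (for $d=n-4$ the off-path set has only three vertices, so "heavy" merely means having one off-path neighbour). Here I would argue structurally: a large family of heavy path vertices forces $G$ to be, or to be close to, one of the graphs $G_{n,d,t}$ or $G_{n,d,r,a}$, on the complement of which Lemma \ref{z+} already gives the stronger bound $m_G[n-d+2,n]\le n-d-1$, while on these families themselves one estimates $\mu_{d-3}(\overline{G})$ directly (with Lemma \ref{gndt} as the anchor). The case $d=n-4$, where the threshold $n-d$ equals $4$, is the hardest; there the eigenvalue-$4$ analysis — the eigenvector recursion on pendant paths (Lemma \ref{cha}) and the bounds $\mu_5(H_{n,p,q})<4$ and $\mu_5(G_{7,3,2,1})<4$ (Lemma \ref{Hpq}) — is what I would use to pin down the top of the spectrum of the exceptional graphs. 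I expect essentially all of the technical weight of the proof to concentrate in these two boundary co-diameters.
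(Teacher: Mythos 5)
Your reduction to $\mu_{d-3}(\overline{G})>d$ and the decomposition $B=L(\overline{P_{d+1}})+D^*$ over the diametral path is a genuinely different route from the paper, which instead takes the induced subgraph $G'$ on $V(P)$ together with three off-path vertices, writes the corresponding principal submatrix of $L(G)$ as $L(G')$ plus a diagonal matrix of norm at most $n-d-4$, and reduces everything to the single inequality $\mu_7(G')<4$. Your argument is correct as far as it goes: a diametral path is induced, the window observation does cap the number of heavy vertices at $3$ once $n-d\ge 6$, and the lower Weyl step with indices $p=d-3+k$, $q=d+1-k$ is legitimate. So you have a complete and rather clean proof for $5\le d\le n-6$.

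The gap is the pair of boundary cases $d=n-5$ and $d=n-4$, and it is a real one, not a routine patch. For $d=n-5$ two disjoint heavy windows can coexist (two heavy vertices at path-distance at least $3$ only force $2(n-d-3)\le n-d-1$, which is satisfied with equality when $n-d=5$), so $k$ can reach $6$; for $d=n-4$ "heavy" merely means having one off-path neighbour and $k$ can reach $9$. In both cases the index $d-3+k$ overshoots $d$ and the Weyl step collapses. Your proposed rescue is also unsound as stated: Lemma \ref{z+} controls $m_G[n-d+2,n]$, i.e.\ it asserts $\mu_{n-d}(G)<n-d+2$, which neither implies nor is implied by the inequality you actually need, $\mu_{n-d+3}(G)<n-d$; and for $d=n-4$ the heavy-vertex condition is far too weak to force $G$ into the families $G_{n,d,t}$ or $G_{n,d,r,a}$. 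The paper avoids this bifurcation entirely because its submatrix lives on only $d+4$ vertices regardless of $n$, so the diagonal perturbation is uniformly bounded and all the difficulty is concentrated in proving $\mu_7(G')<4$, which is done by a case analysis on the positions of the three off-path neighbourhoods together with the eigenvector recursion of Lemma \ref{cha} and the estimates of Lemma \ref{Hpq}. You would need an argument of comparable weight for your two leftover co-diameters before this counts as a proof of the full statement.
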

\begin{proof}
Let $P:=v_1\dots v_{d+1}$ be a diametral path of $G$. As $d\le n-4$, there are at least three vertices lying outside $P$. Assume that  $u$, $v$ and $w$ are three such vertices.
Let $G'$ be the subgraph of $G$ induced by $V(P)\cup \{u,v,w\}$ and $B$ the principal submatrix of $L(G)$ corresponding to vertices of $G'$. Denote by $M$ the  diagonal matrix whose diagonal entry corresponding to vertex $z$ is $\delta_G(z) -\delta_{G'}(z)$ for $z\in V(G')$. Then  $B=L(G')+M$.
By  Lemma \ref{interlacing},
\[
\mu_{n-d+3}(G) =\rho_{n-(d+4)+7}(L(G))\le \rho_7(B).
\]
By Lemma \ref{cw},
\[
\rho_7(B) \le \mu_7(G')+  \rho_1(M).
\]
Thus  $\mu_{n-d+3}(G)\le \mu_7(G')+  \rho_1(M)$. Obviously, $\rho_1(M)\le n-|V(G')| = n-d-4$.
If $\mu_7(G')<4$, then $\mu_{n-d+3}(G)<n-d$,
so $m_G[n-d,n]\le n-d+2$.
Thus, it suffices to show that $\mu_7(G')<4$.

As $P$ is a diametral path of $G$, any vertex outside $P$ has at most three consecutive neighbors on $P$. For $z=u,v,w$, let $n_z=|\Gamma_{G,P}(z)|$. If $n_z<3$ for some $z=u,v,w$, then there exist $3-n_z$ vertices on $P$ so that $P$ remains to be a diametral path of the graph obtained by adding edges between $z$ and the $3-n_z$ vertices.
By Lemma \ref{cauchy}, we can assume that $\Gamma_{G,P}(u)= \{v_{p-1},v_p,v_{p+1}\}$, $\Gamma_{G,P}(v)= \{v_{q-1},v_q,v_{q+1}\}$ and $\Gamma_{G,P}(w)= \{v_{r-1},v_r,v_{r+1}\}$, where $2\le p,q,r\le d$.
Assume that $p\le q\le r$.
If $q-p>r-q$, then we relabel the vertices of $G$  by setting
$v'_i=v_{d+2-i}$ for $i=1,\dots, d+1$, $u'=w$, $v'=v$, and  $w'=w$, so
we have $p'\le q'\le r'$ and $q'-p'\le r'-q'$, where
$p'=d+2-r$, $q'=d+2-q$ and $r'=d+2-p$.  So,  we assume furthermore  that $q-p\le r-q$.

\noindent
{\bf Case 1.} $r\ge q+2$.

Note that $uw, vw\not\in E(G)$. It is easy to see that $G'-\{v_{r-1}v_r, wu_{r+1}\}\cong H_{d+4,p,q}$ or $H_{d+4,p,q}-uv$, where $u$ and $v$ are the two vertices outside the diametral path $P$.
By Lemmas \ref{cauchy} and \ref{Hpq}, one gets
\[
\mu_7(G')\le \mu_5(H_{d+4,p,q})<4,
\]
as desired.

\noindent
{\bf Case 2.} $r=q+1$.

By assumption, we have $p\le q\le p+1$.

\noindent
{\bf Case 2.1.} $q=p+1$.

It is possible that $v$ is adjacent to $u$ or $w$. Assume that $uv,vw\in E(G)$ by Lemma \ref{cauchy}.
Let $u_i=v_i$ for $i=1,\dots,p-1$, $u_p=u$, $u_{i+1}=v_i$ for $i=p,p+1$, $u_{p+3}=w$, $u_{i+2}=v_i$ for $i=p+2,\dots,d+1$ and $u_{d+4}=v$. Under this  new labeling,
\[
G'-\{u_{p-1}u_{p+1}, u_pu_{p+2}, u_{p+2}u_{p+4}, u_{p+3}u_{p+5}, u_{p}u_{d+4}, u_{p+4}u_{d+4}\}
 \]
is a copy of $G_{d+4,d+2,p+2}$.
So
\[
L(G')=L(G_{d+4,d+2,p+2})+R,
\]
where $R=(r_{ij})_{(d+4)\times (d+4)}$ with \[
r_{ij}=\begin{cases}
1&\mbox{ if }i=j\in \{p-1,p+1,p+3,p+5 \},\\
2&\mbox{ if }i=j\in \{p,p+2,p+4,d+4 \},\\
-1&\mbox{ if }\{i,j\}\in \{\{p-1,p+1\},\{p,p+2\},\{p,d+4\} \},\\
-1&\mbox{ if }\{i,j \}\in \{\{p+2,p+4\},\{p+3,p+5\},\{ p+4,d+4\} \},\\
0&\mbox{ otherwise.}
\end{cases}
\]
As $R$ is permutational similar to  $L(2P_2\cup C_4\cup (d-4)K_1)$, 
we have $\rho_6(R)=0$. So by Lemmas \ref{cw} and \ref{gndt}, we have \[
\mu_7(G')\le \mu_2(G_{d+4,d+2,p+2})+\rho_6(R)=4.
\]

Suppose that $\mu_7(G')=4$. By Lemma \ref{cw}, there exists a nonzero vector $\mathbf{x}$ such that $R\mathbf{x}=\mathbf{0}$ and $L(G_{d+4,d+2,p+2})\mathbf{x}=4\mathbf{x}$.
Let $x_i=x_{u_i}$ for $i=1,\dots,d+4$.

From $R\mathbf{x}=\mathbf{0}$, we have
$L(C_4)(x_p,x_{p+2}, x_{p+4}, x_{d+4})^\top=\mathbf{0}$, so
$x_p=x_{p+2}=x_{p+4}=x_{d+4}$.

From $R\mathbf{x}=\mathbf{0}$ at $u_{p-1}$ and $u_{p+3}$, respectively, we have $x_{p-1}=x_{p+1}$ and $x_{p+3}=x_{p+5}$.

From $L(G_{d+4,d+2,p+2})\mathbf{x}=4\mathbf{x}$ at $u_p$, we have \[
2x_p-x_{p-1}-x_{p+1}=4x_p,
\]
so $x_{p-1}=-x_p$.

%
%
As $u_1\dots u_{p+1}$ is a pendant path of $G_{d+4,d+2,p+2}$ at $u_{p+1}$, we have by Lemma \ref{cha}
that
\[
x_i=(-1)^{i-1}(2i-1)x_1 \mbox{ for }i=1,\dots,p+1.
\]
From $L(G_{d+4,d+2,p+2})\mathbf{x}=4\mathbf{x}$ at $u_{p+1}$, we have
 $3x_{p+1}-x_{p}-x_{p+2}-x_{d+4}=4x_{p+1}$,  so $x_{p+1}=-3x_{p}$.
It hence follows that \[
(-1)^{p}(2p+1)x_1=x_{p+1}=-3x_{p}=-3(-1)^{p-1}(2p-1)x_1,
\]
i.e., \[
(2p+1)x_1=3(2p-1)x_1,
\]
equivalently,  $x_1=0$. So $x_i=0$ for $i=1,\dots,p+2,p+4,d+4$.
From $L(G_{d+4,d+2,p+2})\mathbf{x}=4\mathbf{x}$ at $u_{d+4}$, we have \[
3x_{d+4}-x_{p+1}-x_{p+2}-x_{p+3}=4x_{d+4}.
\]
As $x_{d+4}=x_{p+1}=x_{p+2}=x_{p+4}=0$, one gets $x_{p+3}=0$. So $x_{p+5}=x_{p+3}=0$. It follows that
$x_i=0$ for $i=1,\dots,p+5$. Now from $L(G_{d+4,d+2,p+2})\mathbf{x}=4\mathbf{x}$ at $u_i$ for $i=p+5,\dots, d+2$, we have $x_{i+1}=0$.
Thus $\mathbf{x}$ is a zero vector, a contradiction.
Therefore, $\mu_7(G')<4$.

\noindent
{\bf Case 2.2.} $q=p$.

By Lemma \ref{cauchy}, we assume that $uv,vw,uw\in E(G')$.

If $3\le p\le d-2$, then $G'-\{v_{p-2}v_{p-1}, v_{p+2}v_{p+3}\}\cong G_{7,3,2,1}\cup P_{p-2}\cup P_{d-p-1}$, so we have
by Lemmas \ref{cauchy} and \ref{Hpq} that
\[
\mu_7(G')\le \mu_5(G'-\{v_{p-2}v_{p-1},v_{p+2}v_{p+3}\})\le \max\{\mu_5(G_{7,3,2,1}),\mu_1(P_{p-2}),\mu_1(P_{d-p-1})\}<4.
\]
If $p=2$, then $G'-v_{p+2}v_{p+3}\cong G_{7,3,2,1}\cup P_{d-p-1}$, so we have by Lemmas \ref{cauchy} and \ref{Hpq} that
\[
\mu_7(G')\le \mu_5(G'-v_{p+2}v_{p+3})=\max\{ \mu_5(G_{7,3,2,1}),\mu_1(P_{d-p-1})\}<4.
\]
If $p=d-1$, then $G'-v_{p-2}v_{p-1}\cong G_{7,3,2,1}\cup P_{p-2}$ and so by Lemmas \ref{cauchy} and \ref{Hpq},
\[
\mu_7(G')\le \mu_5(G'-v_{p-2}v_{p-1})=\max\{ \mu_5(G_{7,3,2,1}),\mu_1(P_{d-p-1})\}<4.
\]

\noindent
{\bf Case 3.} $r=q$.

In this case, $p=q=r$. By Lemma \ref{cauchy}, we assume that $uv,vw,uw\in E(G')$.
Let $u_i=v_i$ for $i=1,\dots,p-1$, $u_p=u$, $u_{p+1}=v_p$, $u_{p+2}=v$, $u_{i+2}=v_i$ for $i=p+1,\dots,d+1$ and $u_{d+4}=w$. Under this new labeling,
\[
G'-\{u_{p-1}u_{p+1}, u_{p-1}u_{p+2}, u_pu_{p+2}, u_pu_{p+3}, u_{p+1}u_{p+3}, u_{p+2}u_{d+4}, u_{p+3}u_{d+4}\}
\]
is a copy of $G_{d+4,d+2,p}$. So \[
L(G')=L(G_{d+4,d+2,p})+R,
\]
where $R=(r_{ij})_{(d+4)\times (d+4)}$  with \[
r_{ij}=\begin{cases}
2&\mbox{ if }i=j\in \{p-1,p,p+1,d+4\},\\
3&\mbox{ if }i=j\in \{p+2,p+3\},\\
-1&\mbox{ if }\{i,j\}\in \{\{p-1,p+1\},\{p-1,p+2\},\{p,p+2\},\{p,p+3\} \},\\
-1&\mbox{ if }\{i,j\}\in \{\{p+1,p+3\},\{p+2,d+4\},\{p+3,d+4\} \},\\
0&\mbox{ otherwise.}
\end{cases}
\]
As $R$ is permutational similar to $L(H\cup (d-2)K_1)$ where $H$ is a graph on $6$ vertices consisting of a cycle $u_{p-1}u_{p+1}u_{p+3}u_{p}u_{p+2}u_{p-1}$ and additional two edges $u_{p+2}u_{d+4}$ and $u_{p+3}u_{d+4}$, we have $\rho_6(R)=0$. So by Lemmas \ref{cw} and \ref{gndt}, we have \[
\mu_7(G')\le \mu_2(G_{d+4,d+2,p})+\rho_6(R)=4.
\]

Suppose that $\mu_7(G')=4$. By Lemma \ref{cw}, there exists a nonzero vector $\mathbf{x}$ such that $R\mathbf{x}=\mathbf{0}$ and $L(G_{d+4,d+2,p})\mathbf{x}=4\mathbf{x}$. As earlier, let $x_i=x_{u_i}$ for $i=1,\dots, d+4$.
From $R\mathbf{x}=\mathbf{0}$, we have $L(H)(x_{p-1}, x_{p+1},x_{p+2}, x_{p}, x_{p+3},x_{d+4})^\top=\mathbf{0}$, so
%
%
%
$x_{p-1}=\dots=x_{p+3}=x_{d+4}$.

Suppose first that $p\ge 3$.
From $L(G_{d+4,d+2,p})\mathbf{x}=4\mathbf{x}$ at $u_{p-1}$, we have
\[
3x_{p-1}-x_{p-2}-x_{p}-x_{d+4}=4x_{p-1},
\]
so $x_{p-2}=-3x_{p-1}$.
As $u_1\dots u_{p-1}$ is a pendant path of $G'$ at $u_{p-1}$, we have by Lemma \ref{cha} that  \[
x_i=(-1)^{i-1}(2i-1)x_1 \mbox{ for } i=1,\dots,p-1.
\]
Then \[
x_{p-2}=(-1)^{p-3}(2(p-2)-1)x_1=-3\cdot(-1)^{p-2}(2(p-1)-1)x_1,
\]
i.e., \[
(2p-5)x_1=3(2p-3)x_1.
\]
As  $2p-5\ne 3(2p-3)$, we have $x_1=0$, so $x_i=0$ for $i=1,\dots,p+3,d+4$. If $p=2$, this follows from
$L(G_{d+4,d+2,p})\mathbf{x}=4\mathbf{x}$ at $u_{p}$.

Now from $L(G_{d+4,d+2,p})\mathbf{x}=4\mathbf{x}$ at $u_i$ with $i=p+3,\dots, d+2$, we have $x_{i+1}=0$.
Thus  $\mathbf{x}=\mathbf{0}$, a contradiction.
Therefore, $\mu_7(G')<4$.
\end{proof}

The bound in Theorem \ref{x} can be improved under certain conditions.

\begin{theorem}\label{n-5}
Let $G$ be an $n$-vertex connected graph with diametral path $P:=v_1\dots v_{d+1}$, where $d\le n-5$.
If there exist at least three vertices outside $P$ for which no two have a common neighbor on $P$, then $m_G[n-d,n]\le n-d+1$.
\end{theorem}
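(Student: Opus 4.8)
The plan is to follow the interlacing-plus-Weyl scheme of Theorem \ref{normal}, but to push the index down by one and then settle a delicate boundary case. Set $G'=G[V(P)\cup\{u,v,w\}]$, where $u,v,w$ are the three given vertices, so $|V(G')|=d+4$. Writing $B$ for the principal submatrix of $L(G)$ indexed by $V(G')$ and $B=L(G')+M$ with $M$ the nonnegative diagonal matrix recording edges from $V(G')$ to the outside, Lemmas \ref{interlacing} and \ref{cw} give
\[
\mu_{n-d+2}(G)=\rho_{n-(d+4)+6}(L(G))\le \rho_6(B)\le \mu_6(G')+\rho_1(M)\le \mu_6(G')+(n-d-4).
\]
Since $d\le n-5<n-4$ (and we may assume $d\ge 5$, as $d\le 4$ is already covered by Theorem \ref{x}(i)), the argument proving Theorem \ref{normal} yields $\mu_7(G')<4$, so $G'$ has at most six Laplacian eigenvalues that are at least $4$, and hence $\mu_6(G')\le 4$. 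If $\mu_6(G')<4$, the displayed chain gives $\mu_{n-d+2}(G)<n-d$ and we are done. The whole difficulty is the boundary case $\mu_6(G')=4$, where the last inequality must be upgraded to a strict one.

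To analyze $\mu_6(G')=4$ I would first extract structure from the hypothesis. Because no two of $u,v,w$ share a neighbour on $P$, their neighbourhoods are pairwise disjoint intervals of consecutive vertices; and a short local computation shows that a full fan (a vertex $z\in\{u,v,w\}$ adjacent to three consecutive $v_{c-1},v_c,v_{c+1}$) produces the eigenvector $\mathbf e_z-\mathbf e_{v_c}$ of $L(G')$ with eigenvalue exactly $4$, because the two path-neighbours $v_{c-1},v_{c+1}$ receive value $0$ and so the rest of the graph does not feel it. These persistent modes have pairwise disjoint supports $\{z,v_c\}$. I expect to show that $\mu_6(G')=4$ can occur only when all three of $u,v,w$ are full fans with disjoint triples (which forces $d\ge 8$); if fewer than three full fans are present the eigenvalue count drops and $\mu_6(G')<4$. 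Thus in the boundary case $L(G')$ carries three eigenvalue-$4$ eigenvectors $\mathbf x_1,\mathbf x_2,\mathbf x_3$ with disjoint two-vertex supports sitting on three centres $v_p,v_q,v_r$ that are pairwise at distance at least $3$ on $P$.

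Now suppose, for contradiction, that $m_G[n-d,n]\ge n-d+2$, i.e. $\mu_{n-d+2}(G)=n-d$. Because the three persistent modes force $\mu_5(G')=\mu_6(G')=4$, the same interlacing bound at index $5$ gives $\mu_{n-d+1}(G)\le n-d$ as well, so $n-d$ is an eigenvalue of $B$ of multiplicity at least $2$; moreover equality throughout forces $\rho_1(M)=n-d-4$. By the equality clause of Lemma \ref{cw}, every vector in the (at least two-dimensional) eigenspace $W$ of $B$ for $n-d$ is simultaneously an eigenvalue-$4$ eigenvector of $L(G')$ and supported on $S:=\{z\in V(G'):M_{zz}=n-d-4\}$, the set of vertices adjacent to all $n-d-4$ outside vertices (there is at least one, since $d\le n-5$). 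Since the eigenvalue-$4$ eigenspace of $L(G')$ is spanned by the disjointly supported $\mathbf x_1,\mathbf x_2,\mathbf x_3$, a two-dimensional $W$ must contain a vector whose support meets at least two fans; hence $S$ contains two centres, say $v_p$ and $v_q$ with $q-p\ge 3$. Choosing any outside vertex $t$, both $tv_p$ and $tv_q$ are edges, so $v_1\cdots v_p\,t\,v_q\cdots v_{d+1}$ is a $v_1$--$v_{d+1}$ walk of length $d+2-(q-p)\le d-1$, contradicting that $P$ is diametral. Therefore $\mu_{n-d+2}(G)<n-d$ and $m_G[n-d,n]\le n-d+1$.

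The main obstacle is the boundary analysis of the third paragraph. Two points need genuine care: first, proving that $\mu_6(G')=4$ really forces the three-full-fan configuration and that the eigenvalue-$4$ eigenspace of $L(G')$ is exactly the span of the persistent modes, with no stray eigenvector that could evade the support argument; and second, justifying that the multiplicity-$\ge 2$ eigenspace $W$ of $B$ lies inside that span and therefore meets two fans. Degenerate configurations—special vertices with one or two neighbours on $P$, or full fans abutting the ends $v_1,v_{d+1}$—should be disposed of first, since they make $\mu_6(G')<4$ outright and never reach the boundary case.
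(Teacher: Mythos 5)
Your overall frame (interlacing plus Weyl on an induced subgraph) is the right one, but there is a genuine gap at the first quantitative step, and the repair you sketch for the boundary case relies on more than the cited lemmas deliver. The inference ``$\mu_7(G')<4$, so $G'$ has at most six Laplacian eigenvalues that are at least $4$, and hence $\mu_6(G')\le 4$'' is a non sequitur: knowing the seventh largest eigenvalue is below $4$ says nothing about the sixth, which could exceed $4$. So $\mu_6(G')\le 4$ would have to be proved from scratch, and it is a strictly stronger statement than what the proof of Theorem \ref{normal} yields. Moreover the boundary case is real (with three disjoint full fans each vector $\mathbf{e}_z-\mathbf{e}_{v_c}$ is an eigenvalue-$4$ eigenvector, and one expects $\mu_6(G')=4$ exactly), so the delicate analysis cannot be avoided. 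Within that analysis, you assert that \emph{every} vector in the eigenspace $W$ of $B$ for $n-d$ is simultaneously an eigenvalue-$4$ eigenvector of $L(G')$ and an eigenvector of $M$ for $\rho_1(M)$; but Lemma \ref{cw} only guarantees \emph{one} common eigenvector, not that the whole eigenspace is common. A single common eigenvector places only one fan's support inside $S$, which does not yield the two centres $v_p,v_q$ needed for your shortcut contradiction. The claims that $\mu_6(G')=4$ forces three full fans and that the eigenvalue-$4$ eigenspace of $L(G')$ is exactly the span of the three local modes are also left unproved.

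The paper sidesteps all of this by exploiting $d\le n-5$ more fully: there are at least \emph{four} vertices outside $P$, so one works with $H=G[V(P)\cup\{w_1,\dots,w_4\}]$ of order $d+5$, whence $\rho_1(M)\le n-d-5$ and it suffices to prove the much weaker bound $\mu_7(H)<5$. That bound is obtained by deleting a single cut edge $v_{p}v_{p+1}$ of $H$ so that each resulting component contains two of the special vertices, and then applying Theorems \ref{x1} and \ref{x2} together with Lemmas \ref{z+} and \ref{mu1} to the pieces; the disjoint-neighbourhood hypothesis is what guarantees such a splitting and that at least one piece avoids the exceptional graphs of Lemma \ref{z+}. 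If you wish to keep the three-vertex subgraph you must actually establish $\mu_6(G')\le 4$ and fully classify the equality case; passing to the four-vertex subgraph is the far easier road.
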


\begin{proof}
As $d\le n-5$, there are at least four vertices outside $P$, say $w_1$, $w_2$, $w_3$ and $w_4$. Assume that no two of some  three vertices among  $w_1$, $w_2$, $w_3$ and $w_4$ have a common neighbor on $P$.

Let $p_i=\max\{j: v_j\in \Gamma_{G,P}(w_i)\}$ for $i=1,2,3,4$. Assume that $p_1\le \dots \le p_4$.
Let $H=G[V(P)\cup \{w_1,\dots,w_4\}]$.

Suppose first that $\Gamma_{G,P}(w_2)\cap \Gamma_{G,P}(w_3)=\emptyset$. Let $H-v_{p_{2}}v_{p_{2}+1}=H_1\cup H_2$, where
$V(H_1)=\{v_1, \dots, v_{p_2}, w_1,w_2\}$ and $V(H_2)=\{v_{p_2+1}, \dots, v_{d+1}, w_3,w_4\}$. Evidently,  $H_1$ ($H_2$, respectively) is a connected graph of order $p_2+2$ ($d-p_2+3$, respectively) with diameter $p_2-1$ ($d-p_2$, respectively).
Since  no two of three vertices outside $P$ have a common neighbor on $P$ in $G$,   there are two possibilities:

(i) $H_1\not\cong G_{p_2+2,p_2-1,t}$ for any $2\le t\le p_2-1$, and $H_1\not\cong G_{p_2+2,p_2-1,r,1}$
for any  $2\le r\le p_2-1$. By Lemma \ref{z+}, we have $\mu_3(H_1)<5$. By  Theorem \ref{x1}, $\mu_4(H_2)<5$.
By Lemma \ref{cauchy}, one gets
\[
\mu_7(H)\le \mu_6(H_1\cup H_2)\le \max\{\mu_3(H_1),\mu_4(H_2) \}<5.
\]

(ii) $H_2\not\cong G_{d-p_2+3,d-p_2,t}$ for any $2\le t\le d-p_2$, and $H_2\not\cong G_{d-p_2+3,d-p_2,r,1}$
for any  $2\le r\le d-p_2$. By Theorem \ref{x1}, we have $\mu_4(H_1)<5$.
By Lemma \ref{z+}, $\mu_3(H_2)<5$.
By Lemma \ref{cauchy}, one gets
\[
\mu_7(H)\le \mu_6(H_1\cup H_2)\le \max\{\mu_4(H_1),\mu_3(H_2) \}<5.
\]

Suppose next that $\Gamma_{G,P}(w_2)\cap \Gamma_{G,P}(w_3)\ne \emptyset$.
By the assumption, $\Gamma_{G,P}(w_1)\cap \Gamma_{G,P}(w_2)=\emptyset$ or $\Gamma_{G,P}(w_3)\cap \Gamma_{G,P}(w_4)=\emptyset$, say $\Gamma_{G,P}(w_3)\cap \Gamma_{G,P}(w_4)=\emptyset$.
Let $H-v_{p_3}v_{p_3+1}=H_3\cup H_4$. Then $H_3$ ($H_4$, respectively) is a connected graph of order $p_3+3$ ($d-p_3+2$, respectively) with diameter $p_3-1$ ($d-p_3$, respectively). By Theorem \ref{x2}, $\mu_6(H_3)<5$.  Lemma \ref{mu1}, we have  $\mu_1(H_4)<5$.
Now,  by Lemma \ref{cauchy}, one gets
\[
\mu_7(H)\le \mu_6(H-v_{p_3}v_{p_3+1})\le \max\{\mu_6(H_3),\mu_1(H_4) \}<5.
\]

Therefore, $\mu_7(H)<7$ in each case. Let
$B$ be the principal submatrix of $L(G)$ corresponding to vertices of $H$ and $M$ is the diagonal matrix whose diagonal entry corresponding to vertex $z$ is $\delta_G(z)-\delta_H(z)$ for $z\in V(H)$. Then,
by Lemma \ref{interlacing} and \ref{cw}, \[
\mu_{n-d+2}(G)=\rho_{n-(d+5)+7}(L(G))\le  \rho_7(B)\le \mu_7(H)+\rho_1(M)<n-d,
\]
as desired.
\end{proof}

\section{Proof of Theorem \ref{d-2}}

Theorem \ref{d-2} follows from Theorems \ref{2d-2} and \ref{2d-1}.

\begin{theorem}\label{2d-2}
Let $G$ be a connected graph of order $n$ with diameter $d$. If $2\le d\le \lfloor\frac{n+3}{2} \rfloor$, then $m_G[n-2d+4,n]\le n-2$.
\end{theorem}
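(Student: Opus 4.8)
The plan is to reduce the claim to a single upper bound on the algebraic connectivity $\mu_{n-1}(G)$. Since $d\le\lfloor\frac{n+3}{2}\rfloor$, we have $n-2d+4\ge 1>0$, so the trivial eigenvalue $\mu_n(G)=0$ already lies outside the interval $[n-2d+4,n]$. As every Laplacian eigenvalue is at most $n$, an eigenvalue fails to lie in $[n-2d+4,n]$ exactly when it is strictly below $n-2d+4$; hence $m_G[n-2d+4,n]\le n-2$ is equivalent to the existence of a \emph{second} such eigenvalue, namely $\mu_{n-1}(G)<n-2d+4$. This strict inequality is what I would establish.

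To bound $\mu_{n-1}(G)$ I would pass to the vertex connectivity $\kappa(G)$. Fix a diametral path $P=v_1\dots v_{d+1}$. The endpoints $v_1,v_{d+1}$ are non-adjacent, and every $v_1$--$v_{d+1}$ path has length at least $d_G(v_1,v_{d+1})=d$, hence at least $d-1$ internal vertices. Internally disjoint such paths therefore use pairwise disjoint blocks of at least $d-1$ internal vertices among the $n-2$ vertices distinct from $v_1,v_{d+1}$. By Menger's theorem $\kappa(G)$ is at most the maximum number of internally disjoint $v_1$--$v_{d+1}$ paths, and the latter is at most $\frac{n-2}{d-1}$; thus $\kappa(G)\le\frac{n-2}{d-1}$. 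Since $G$ is connected and, because $d\ge 2$, not complete, Lemma \ref{mun-1} gives $\mu_{n-1}(G)\le\kappa(G)\le\frac{n-2}{d-1}$.

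It then suffices to compare $\frac{n-2}{d-1}$ with $n-2d+4$. Clearing the positive denominator, $\frac{n-2}{d-1}<n-2d+4$ is equivalent to positivity of the concave quadratic $f(d)=(n-2d+4)(d-1)-(n-2)=-2d^2+(n+6)d-2n-2$. I would evaluate the endpoints $f(2)=2>0$ and $f\!\left(\frac{n+2}{2}\right)=2>0$; by concavity this forces $f(d)>0$, and hence the strict inequality $\mu_{n-1}(G)<n-2d+4$, for every $d$ with $2\le d\le\frac{n+2}{2}$. This already covers the whole admissible range $2\le d\le\lfloor\frac{n+3}{2}\rfloor$ when $n$ is even, and the subrange $d\le\frac{n+1}{2}$ when $n$ is odd.

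The single case the connectivity bound does not settle on its own is $n$ odd with $d=\frac{n+3}{2}$ (equivalently $n=2d-3$), and this is the main obstacle. Here the threshold is $n-2d+4=1$, while $\frac{n-2}{d-1}=\frac{2d-5}{d-1}=2-\frac{3}{d-1}\in[1,2)$, so the estimate only yields $\kappa(G)\le 1$ and thus $\mu_{n-1}(G)\le 1$. To upgrade this to $\mu_{n-1}(G)<1$ I would invoke the equality characterization in Lemma \ref{mun-1}: if $\mu_{n-1}(G)=\kappa(G)=1$, then $G$ is the join of a one-vertex graph with a disconnected graph of order $n-1$, so $G$ has a vertex adjacent to all others and hence diameter at most $2$, contradicting $d\ge 3$. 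Therefore $\mu_{n-1}(G)<1$ in this case as well, and the proof is complete. The delicate points are precisely this degenerate boundary case and the verification that the concavity argument places the entire admissible range to the left of the larger root of $f$; everything else is routine.
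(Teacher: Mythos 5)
Your proposal is correct and follows essentially the same route as the paper: both reduce the claim to showing $\mu_{n-1}(G)<n-2d+4$, bound $\mu_{n-1}(G)$ by $\kappa(G)$ via Lemma \ref{mun-1}, and bound $\kappa(G)$ by a counting argument along the diametral path (the paper counts disjoint separating layers, you count internally disjoint $v_1$--$v_{d+1}$ paths via Menger's theorem, which gives the same estimate). The only cosmetic difference is how strictness is secured: the paper observes once that $G$ is not a join when $d\ge 3$, whereas you obtain strictness numerically except at the boundary case $n=2d-3$, where you too fall back on the join characterization in Lemma \ref{mun-1}.
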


\begin{proof}
It suffices to show that $\mu_{n-1}(G)< n-2d+4$.
If $d=2$, then $G$ is a spanning subgraph of $K_n-e$ for some $e\in E(K_n)$,  so we have by Lemma \ref{cauchy} that $\mu_{n-1}(G)\le n-2<n=n-2d+4$, as desired.
Suppose that $d\ge 3$.
For $i=2,\dots, d-1$, let $V_i$ be the set of vertices of $G$ such that the distance to $v_1$ is $i-1$.
Let $V_d$ be the set of vertices of $G$ such that the distance to $v_1$ is $d-1$ and the neighbors of $v_{d+1}$.
Evidently, $v_i\in V_i$ and  $V_i$ is a cut set of $G$ for each $i=2,\dots,d$.
As $P$ is a diametral path, $V_i\cap V_j=\emptyset$ if $i\ne j$ and there is no edge between $V_i$ and $V_j$ if $|j-i|\ge 2$.
If $\kappa(G)\ge n-2d+5$, then
\[
2+(n-2d+5)(d-2)\le |\{v_1,v_{d+1}\}|+\sum_{i=2}^d|V_i|\le n,
\]
i.e., $2d^2-(n+9)d +3n+8\ge 0$, so $d<3$ or $d>\frac{n+3}{2}$, a contradiction.
So  $\kappa(G)\le n-2d+4$.
As $d\ge 3$,  $G$ is not a join, so we have by Lemma \ref{mun-1} that $\mu_{n-1}(G)<\kappa(G)\le n-2d+4$.
\end{proof}

Evidently, $m_{K_n-e}[n,n]=n-2$.  Let  $R_1$ ($R_2$, respectively) be the graph on $8$ vertices ($7$ vertices, respectively) with diameter $5$ in Fig. \ref{d5}. By a direct calculation, we have $\mu_6(R_1)=2$ and $\mu_5(R_2)=1$. By Theorem \ref{2d-2}, $m_{R_1}[2,8]=6$ and $m_{R_2}[1,7]=5$, agreeing the bound in Theorem \ref{2d-2} for  $d=\frac{n+2}{2}=5$ and $d=\frac{n+3}{2}=5$, respectively.

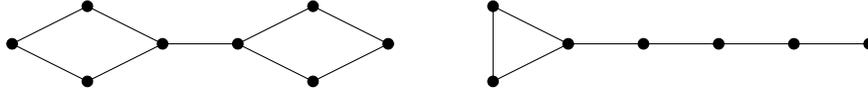
\begin{figure}[htbp]
\centering
\begin{tikzpicture}
\draw [black] (8,0.5)--(9,0);
\draw[black] (8,0.5)--(9,1);
\draw[black] (9,0)--(10,0.5);
\draw[black] (9,1)--(10,0.5);
\draw[black] (10,0.5)--(11,0.5);
\draw[black] (11,0.5)--(12,0);
\draw[black] (11,0.5)--(12,1);
\draw[black] (12,0)--(13,0.5);
\draw[black] (12,1)--(13,0.5);
\filldraw[black] (8,0.5) circle (2pt);
\filldraw[black] (9,0) circle (2pt);
\filldraw[black] (9,1) circle (2pt);
\filldraw[black] (10,0.5) circle (2pt);
\filldraw[black] (11,0.5) circle (2pt);
\filldraw[black] (12,0) circle (2pt);
\filldraw[black] (12,1) circle (2pt);
\filldraw[black] (13,0.5) circle (2pt);
\end{tikzpicture} \ \ \ \ \ \   \ \
\begin{tikzpicture}
\draw  [black](1,0.5)--(5,0.5);
\draw [black] (0,0)--(0,1);
\draw[black] (0,0)--(1,0.5);
\draw[black] (0,1)--(1,0.5);
\filldraw [black] (0,0) circle (2pt);
\filldraw[black] (0,1) circle (2pt);
\filldraw [black] (1,0.5) circle (2pt);
\filldraw [black] (2,0.5) circle (2pt);
\filldraw [black] (3,0.5) circle (2pt);
\filldraw [black] (4,0.5) circle (2pt);
\filldraw[black] (5,0.5) circle (2pt);

\end{tikzpicture}
\caption{The graph $R_1$ (left) and $R_2$ (right).}
\label{d5}
\end{figure}

If $3\le d\le \lfloor\frac{n+1}{2}\rfloor$, Theorem \ref{2d-2} may be improved as follows.

\begin{theorem}\label{2d-1}
Let $G$ be a connected graph of order $n$ with diameter $d$. If $3\le d\le \lfloor \frac{n+1}{2}\rfloor$, then $m_G[n-2d+4,n]\le n-3$.
\end{theorem}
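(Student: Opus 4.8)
The plan is to reformulate the claim spectrally and then bound a single eigenvalue. Since the eigenvalues are non-increasing, $m_G[n-2d+4,n]\le n-3$ is equivalent to $\mu_{n-2}(G)<n-2d+4$; by Lemma \ref{n-} (with $i=n-2$) this is in turn equivalent to $\mu_2(\overline G)>2d-4$. The case $d=3$ is already contained in \cite{XZ}, so I would assume $d\ge 4$. I would attack $\mu_{n-2}(G)<n-2d+4$ with two complementary upper bounds on $\mu_{n-2}(G)$: a cut bound coming from a three-component separation, and the degree bound of Corollary \ref{delta}.

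For the first, I would prove the following three-component principle: if $S\subseteq V(G)$ is such that $G-S$ has at least three components, then $\mu_{n-2}(G)\le|S|$. In the complement this reads $\mu_2(\overline G)\ge n-|S|$, which I would obtain as follows. Grouping the components of $G-S$ into three nonempty blocks $C_1,C_2,C_3$, every edge between distinct blocks is present in $\overline G$, so the principal submatrix $B$ of $L(\overline G)$ indexed by $C_1\cup C_2\cup C_3$ satisfies that $B-L(K_{|C_1|,|C_2|,|C_3|})$ is positive semidefinite (the extra within-block edges and the off-block degrees only add positive semidefinite terms). The second largest Laplacian eigenvalue of the complete tripartite graph $K_{|C_1|,|C_2|,|C_3|}$ equals $|C_1|+|C_2|+|C_3|=n-|S|$, so eigenvalue monotonicity under a positive semidefinite perturbation gives $\rho_2(B)\ge n-|S|$, and the interlacing in Lemma \ref{interlacing} gives $\mu_2(\overline G)=\rho_2(L(\overline G))\ge\rho_2(B)\ge n-|S|$. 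I would then apply this with $S$ a pair of non-consecutive distance layers: taking the layering $L_0=\{v_1\},\dots,L_d$ from an endpoint $v_1$ of a diametral path $P=v_1\dots v_{d+1}$ and $a_j=|L_j|$, removing $S=L_i\cup L_{i'}$ for $1\le i$, $i'\le d-1$, $i'\ge i+2$ leaves at least three components, whence $\mu_{n-2}(G)\le a_i+a_{i'}$. Here the diameter hypothesis is essential: since $d\le\lfloor\frac{n+1}{2}\rfloor$, i.e.\ $n\ge 2d-1$, the $d+1$ layers (with $a_0=1$) summing to $n$ cannot all be large, and a short extremal analysis of $\{a_1,\dots,a_{d-1}\}$ against $\sum_{j=1}^{d-1}a_j=n-1-a_d$ should produce a pair with $a_i+a_{i'}\le n-2d+3$, giving $\mu_{n-2}(G)\le n-2d+3<n-2d+4$.

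For the second bound I would use $\mu_{n-2}(G)\le\delta_{n-1}(G)+1$ from Corollary \ref{delta}. If $\delta_{n-1}(G)\le n-2d+2$ this already yields $\mu_{n-2}(G)\le n-2d+3<n-2d+4$. The two bounds are designed to cover complementary regimes: when the second smallest degree is large, the degree windows $a_{j-1}+a_j+a_{j+1}$ are forced up at all but one vertex, which pushes the layer sizes up and hence makes the cut of the previous paragraph cheap; conversely, when the layers refuse to admit a cheap non-consecutive pair, low degrees (in particular at $v_1$, at its neighbor, and near $v_{d+1}$) appear and make $\delta_{n-1}(G)$ small. I would formalize this as a trichotomy on $\delta_{n-1}(G)$ relative to the thresholds $n-2d+2$ and $n-2d+3$.

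The main obstacle is the boundary case in which both bounds evaluate to exactly $2d-4$ in the complement, i.e.\ where $\mu_{n-2}(G)$ is pinned at the value $n-2d+4$ by the estimates but the true inequality is strict. Recovering strictness requires the equality characterizations: for the degree route, the equality case $\mu_{n-2}(G)=\delta_{n-1}(G)+1$ in Corollary \ref{delta} forces a rigid neighborhood structure on the two minimum-degree vertices that is incompatible with one of them being an endpoint (or near-endpoint) of the diametral path with a prescribed neighbor on $P$; for the cut route, equality in Lemma \ref{cw} forces a common eigenvector for $B$ and the added positive semidefinite parts, which I would show is impossible unless the layer sizes collapse to a small explicit list. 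Verifying that the finitely many surviving configurations (which should match the tightness examples of Section 4) all satisfy the strict inequality is where I expect the real work to lie.
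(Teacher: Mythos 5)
Your reduction to $\mu_{n-2}(G)<n-2d+4$ is right, and your three-component separator principle (if $G-S$ has at least three components then $\mu_{n-2}(G)\le|S|$, proved via $\mu_2(\overline G)\ge\mu_2(K_{|C_1|,|C_2|,|C_3|})=n-|S|$ and Lemma \ref{n-}) is a correct and genuinely different tool from the one the paper uses; the paper instead works with singleton layers $V_\ell$, applies Lemma \ref{mun-1} to the components of $G-v_\ell$, and recombines with Weyl and interlacing. However, your proposal has a concrete gap at the step where you claim a ``short extremal analysis'' produces non-consecutive internal layers with $a_i+a_{i'}\le n-2d+3$. Take $d=5$, $n=9=2d-1$ and layer sizes $a_0=1$, $(a_1,a_2,a_3,a_4)=(2,2,2,1)$, $a_5=1$: every non-adjacent pair from $\{1,2,3,4\}$ has sum at least $3=n-2d+4$, so the cut bound only gives $\mu_{n-2}(G)\le n-2d+4$, not strict inequality. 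In the same configuration $\delta_{n-1}(G)\le 2=n-2d+3$ (the vertex in $L_5$ has degree $1$ and $v_1$ has degree at most $2$), so your degree route also stalls at $\mu_{n-2}(G)\le\delta_{n-1}(G)+1\le n-2d+4$. The two bounds therefore do not cover complementary regimes in the clean way you assert: there are layerings where both are tight simultaneously.

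This means everything reduces to the equality analysis you defer to the final paragraph, and that is not a boundary nuisance but the bulk of the theorem. The paper's proof spends almost all of its length there: in its Case 2 ($\delta_{n-1}(G)\le n-2d+3$) it assumes $\mu_{n-2}(G)=\delta_{n-1}(G)+1=n-2d+4$, invokes the two structural alternatives of Corollary \ref{delta}, kills alternative (i) by showing the twin/nonadjacent-twin structure forces $d\le 3$, and in alternative (ii) runs a multi-step argument locating a singleton layer $V_\ell$, splitting $G-v_\ell$ into components $H$ and $F$, and using Lemma \ref{mun-1}'s join characterization together with degree counting to reach a contradiction in each sub-case. Your sketch gestures at ``finitely many surviving configurations'' matching the tightness examples, but the surviving configurations are whole families (one for each admissible layer-size vector with $n$ close to $2d-1$), and no argument is given for why the common-eigenvector condition in Lemma \ref{cw} or the neighborhood rigidity in Corollary \ref{delta} eliminates them. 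As written, the proposal establishes $\mu_{n-2}(G)\le n-2d+4$ (which is Theorem \ref{2d-2}, already known) but not the strict inequality that is the actual content of the statement.
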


\begin{proof} The case for $d=3$ is known from \cite[Theorem 6]{XZ}, and the case for $d=4$ follows from Theorem \ref{x} (i). Suppose in the following that $d\ge 5$. It suffices to show that $\mu_{n-2}(G)<n-2d+4$.

Let $P:=v_1\dots v_{d+1}$ be a diametral path of $G$.
For $i=1,\dots, d-1$, let $V_i$ be the set of vertices of $G$ such that the distance to $v_1$ is $i-1$.
Let $V_{d}$ be the set of vertices of $G$ except $v_{d+1}$ such that the distance to $v_1$ is $d-1$ or $d$. Let $V_{d+1}=\{v_{d+1}\}$.
By Lemma \ref{cauchy}, we assume that $G[V_i\cup V_{i+1}]$ is complete for each $i=2,\dots, d$.
Note that $V_i$ is a cut set of $G$ for each $i=2,\dots,d$ and that  $v$ is a cut vertex of  $G$
if and only if $v=v_i$ and $|V_i|=1$ for some $i=2,\dots, d$.

Let $s$ be the number of sets $V_2,\dots, V_d$ with cardinality $1$.
We divide the proof into two cases.

\noindent
{\bf Case 1.} $\delta_{n-1}(G)\ge n-2d+4$.

Note that  $\max\{|V_2|, |V_d|\}=\max\{\delta_G(v_1), \delta_G(v_{d+1})\}\ge \delta_{n-1}(G)\ge n-2d+4$.
Assume that $|V_2|\ge n-2d+4>2$ and $|V_j|=\min\{|V_i|: i=3,\dots, d\}$. Then
 $|V_i|\ge 2$ for $i=3,\dots,d$ if $s=0$, and $|V_i|\ge 2$ for $i=3,\dots, d$ with $i\ne j$ if $s=1$. Thus, if $s=0,1$, then
\[
n+1=2+1+(n-2d+4)+2(d-3)\le |\{v_1,v_{d+1}\}|+|V_j|+|V_2|+\sum_{i=3\atop i\ne j}^d|V_i|\le n,
\]
a contradiction. So $s\ge 2$. Assume that  $|V_{\ell}|=1$. Then $v_\ell$ is one cut vertex of $G$. Suppose
that there is a component $G_0$ of $G-v_\ell$ such that $G_0$ has a cut vertex. Then $\kappa(G_0)=1$ and by Lemma \ref{mun-1}, $\mu_{|V(G_0)|-1}(G_0)\le \kappa(G_0)=1$.
Let $B$ be the principal submatrix of $L(G)$ by deleting the row and column corresponding to vertex $v_\ell$.
By Lemma \ref{cw}, $\rho_{n-3}(B)\le \mu_{n-3}(G-v_\ell)+\rho_1(B-L(G-v_\ell))=\mu_{n-3}(G-v_\ell)+1$.
Then, by Lemma \ref{interlacing}, we have
\[
\mu_{n-2}(G)\le \rho_{n-3}(B)\le \mu_{n-3}(G-v_\ell)+1\le \mu_{|V(G_0)|-1}(G_0)+1\le 2<n-2d+4,
\]
as desired.
Suppose that there is no cut vertices of any component of $G-v_\ell$. Then $s=2$ and either $|V_{\ell-1}|=1$ or $|V_{\ell+1}|=1$, say $|V_{\ell+1}|=1$.
As
\[
n=4+n-2d+4+2(d-4)\le |\{v_1,v_{d+1},v_\ell,v_{\ell+1} \}|+|V_2|+\sum_{i=3\atop i\ne \ell, \ell+1}^d|V_i|\le n,
\]
we have $|V_2|=n-2d+4$ and $|V_i|=2$ for $i=3,\dots, d$ with $i\ne \ell, \ell+1$, where  $3\le\ell\le d-1$.
If $\ell=d-1$, then $\delta_G(v_{d+1})=1$ and $\delta_G(v_{n-1})=2$, so $n-2d+4\le \delta_{n-1}(G)\le 2$, which is a contradiction.
So $\ell\le d-2$ and  $|V_d|= 2$.
Let $B'$ be the principal submatrix of $L(G)$ by deleting the rows and columns corresponding to vertices in $V_d$. Let  $G_1=G-V_d-v_{d+1}$. By Lemma \ref{cw},  $\rho_{n-4}(B')\le \mu_{n-4}(G-V_d)+\rho_1(B'-L(G-V_d))=\mu_{n-4}(G_1)+2$. Note that $G_1$ is not a join with a cut vertex $v_\ell$.  By Lemma \ref{mun-1}, $\mu_{n-4}(G_1)<\kappa(G_1)=1$.
Therefore, by Lemma \ref{interlacing},
\[
\mu_{n-2}(G)\le \rho_{n-4}(B')\le \mu_{n-4}(G_1)+2< \kappa(G_1)+2=3\le n-2d+4,
\]
as desired.

\noindent
{\bf Case 2.} $\delta_{n-1}(G)\le n-2d+3$.

By Corollary \ref{delta}, $\mu_{n-2}(G)\le \delta_{n-1}(G)+1\le n-2d+4$. Suppose by contradiction that
$\mu_{n-2}(G)=n-2d+4$.
Then $\mu_{n-2}(G)=\delta_{n-1}(G)+1$ and $\delta_{n-1}(G)=n-2d+3$.
Let $u_1$ and $u_2$ be two vertices of degree $\delta_n(G)$ and $\delta_{n-1}(G)$ in $G$, respectively.
By Corollary \ref{delta} and the fact that $\mu_{n-2}(G)=\delta_{n-1}(G)+1$, we have the following two cases.

\noindent
{\bf Case 2.1.} $u_1u_2\notin E(G)$, $\delta_{n-1}(G)=\delta_n(G)=\frac{n-2}{2}$ and $N_G(u_1)\cap N_G(u_2)=\emptyset$.

Note that $V(G)=\{u_1,u_2\}\cup N_G(u_1)\cup N_G(u_2)$. Let $U_i=N_G(u_i)$ for $i=1,2$.
As $G$ is connected, there is a vertex $w_i\in U_i$ with $i=1,2$ such that $w_1w_2\in E(G)$. The distance between any vertex pair of vertices in $\{u_1,u_2\}\cup U_i$ with $i=1,2$ is at most three.
Let $z_1\in U_1\setminus \{w_1\}$. If $z_1w_1\in E(G)$, then the distance between $z_1$ and any vertex in $U_2$ is at most three. If $z_1w_1\notin E(G)$, then as $\delta_G(z_1)\ge \delta_n(G)=\frac{n-2}{2}=|U_1|$, we have $z_1z_2\in E(G)$ for some $z_2\in U_2$, so the distance between $z_1$ and any vertex in $U_2$ is at most three. This shows that $d\le 3$, a contradiction.

\noindent
{\bf Case 2.2.} $u_1u_2\in E(G)$ and $N_G(u_1)\setminus\{u_2\}=N_G(u_2)\setminus\{u_1\}$.

Note that $\delta_n(G)=\delta_G(u_1)=\delta_G(u_2)=\delta_{n-1}(G)=n-2d+3$. Then  $|V_2|,|V_d|\ge \delta_n(G)=n-2d+3\ge 2$.  Let $|V_j|=\min\{|V_i|: i=3,\dots, d-1\}$. If $|V_j|\ge 2$, then
\[
2+(n-2d+3)\cdot 2+2(d-3)\le |\{v_1,v_{d+1}\}|+|V_2|+|V_d|+\sum_{i=3}^{d-1}|V_i|\le n,
\]
i.e., $n\le 2d-2$, which is a contradiction.
So $|V_{\ell}|=1$ for some $\ell$ with $3\le \ell\le d-1$.
Denote by $B$ the principal submatrix of $L(G)$ by deleting the row and column corresponding to vertex  $v_\ell$.

Suppose that there is a component $G_0$ of $G-v_\ell$ such that $\kappa(G_0)=1$.
It then follows from Lemmas \ref{interlacing}, \ref{cw} and \ref{mun-1} that
\[
\mu_{n-2}(G)\le \rho_{n-3}(B)\le \mu_{n-3}(G-v_\ell)+1\le \mu_{|V(G_0)|-1}(G_0)+1\le \kappa(G_0)+1=2<n-2d+4,
\]
a contradiction.
So there is no cut vertices of any component of $G-v_\ell$,  $s=1,2$, and if $s=2$, then one of $v_{\ell-1}$ and $v_{\ell+1}$, say $v_{\ell+1}$, is a cut vertex of $G$.
Thus,  $G-v_{\ell}$  consists of two components, say $H$ and $F$, with  $v_1,\dots, v_{\ell-1}\in V(H)$ and  and $v_{\ell+1},\dots, v_{d+1}\in V(F)$.

If $H$ and $F$ are both complete, then $d\le 4$, which is a contradiction to the assumption that $d\ge 5$. Assume that $H$ is not complete. Let $p=|V(H)|$.

If  $F$ is not complete, then as one of $u_1$ and $u_2$ lies in $G-v_{\ell}=H\cup F$ and $\delta_G(u_1)=\delta_G(u_2)=n-2d+3$, we have $\min\{\delta_p(H), \delta_{n-p}(F)\}\le n-2d+3$, so we assume that $\delta_{p}(H)\le n-2d+3$ (if $\delta_{n-p}(F)\le n-2d+3$, then we exchange the roles of $H$ and $F$). If $F$ is complete, then  $\delta_p(H)\le n-2d+3$, as otherwise, we have  $|V_d|\ge 2$, $s=1$,  $\ell=d-1$, and then
\[
3+(n-2d+4)+2(d-4)+(n-2d+3)\le |\{v_1,v_{d+1},v_{d-1}\}|+\sum_{i=2}^d|V_i|\le n,
\]
i.e., $n\le 2d-2$, which is a contradiction.
It then follows that $\kappa(H)\le \delta_{p}(H)\le n-2d+3$.
By Lemma \ref{mun-1}, $\mu_{p-1}(H)\le \kappa(H)\le n-2d+3$.
Now,  by Lemmas \ref{interlacing} and \ref{cw}, we have
\[
n-2d+4=\mu_{n-2}(G)\le \rho_{n-3}(B)\le \mu_{n-3}(G-v)+1\le \mu_{p-1}(H)+1\le n-2d+4,
\]
so $\mu_{p-1}(H)=n-2d+3=\kappa(H)=\delta_p(H)$. By Lemma \ref{mun-1}, $H$ is a join, say $H=H_1\vee H_2$, and one of $H_1$ and $H_2$, say $H_1$,  is disconnected and the other $H_2$ has order $n-2d+3$, so
$\{v_1\}\cup V_3\subseteq V(H_1)$ and $\ell=4$.

Suppose that $s=1$.
Then we have
\[
3+(n-2d+3)+2(d-4)+(n-2d+3)\le |\{v_1,v_{d+1},v_4 \}|+\sum_{i=2\atop i\ne 4}^d|V_i|\le n,
\]
i.e., $n\le 2d-1$,  so $n=2d-1$ and $|V_i|=2$ for $i=2,\dots, d$ with $i\ne 4$.
This is impossible because there are no vertices $u_1$ and $u_2$ such that
$u_1u_2\in E(G)$ and $\delta_G(u_1)=\delta_G(u_2)=2$.

Suppose that $s=2$. Then $4=\ell\le d-2$.
Then $H'=G[V(H)\cup\{v_4\}]$ is a component of $G-v_5$ and it is not a join. Note that $\delta_{p+1}(H')\le \delta_p(H)\le n-2d+3$. So $\kappa(H')\le n-2d+3$. By Lemmas \ref{interlacing}, \ref{cw} and \ref{mun-1}, we have
\[
n-2d+4=\mu_{n-2}(G)\le \rho_{n-3}(B')\le \mu_{n-3}(G-v_5)+1\le \mu_{p}(H')+1<\kappa(H')+1\le n-2d+4,
\]
a contradiction.
\end{proof}

\section{Concluding remarks}

As mentioned in Section 1 by excluding the trivial cases, we propose the following conjecture, which is true for $c=0,1,2, d-3, d-2$.

\begin{conjecture} \label{you} Let $G$ be a connected graph of order $n$  with diameter $d\ge 2$. If
$c=0,\dots, d-2$ with  $\max\{2,c\}\le d\le n-2-c$, then $m_G[n-d+2-c,n]\le n-d+c$.
\end{conjecture}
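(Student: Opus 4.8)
The plan is to set up a single interlacing-plus-Weyl reduction that unifies the proofs of Theorems \ref{x1}--\ref{2d-1} and then to isolate the one spectral inequality that remains. Fix $c$ with $\max\{2,c\}\le d\le n-2-c$, let $P:=v_1\dots v_{d+1}$ be a diametral path, and note that since $d\le n-2-c$ there are at least $c+1$ vertices outside $P$. Choose $c+1$ of them, let $G'$ be the subgraph of $G$ induced by $V(P)$ together with these vertices, and let $B=L(G')+M$ be the principal submatrix of $L(G)$ on $V(G')$, where $M$ is the diagonal matrix of lost degrees. Here $|V(G')|=d+c+2$. By Lemma \ref{interlacing}, $\mu_{n-d+c+1}(G)=\rho_{\,n-(d+c+2)+(2c+3)}(L(G))\le\rho_{2c+3}(B)$, and by Lemma \ref{cw} with the split $2c+3=(2c+3)+1-1$ we get $\rho_{2c+3}(B)\le\mu_{2c+3}(G')+\rho_1(M)$. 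Since $\rho_1(M)\le n-|V(G')|=n-d-c-2$, the target $m_G[n-d+2-c,n]\le n-d+c$ follows once one shows
\[
\mu_{2c+3}(G')<4 .
\]
The threshold is exactly $4$ for every $c$, reflecting that $P_{d+1}\subseteq G'$ and $\mu_1(P_m)<4$ (Lemma \ref{Hpq}(i)); the cases $c=0,1,2$ are precisely Theorems \ref{x1}, \ref{x2} and \ref{normal}, where this reduction is already visible.

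Next I would normalise $G'$. Because each vertex off a diametral path has at most three consecutive neighbours on $P$, Lemma \ref{cauchy} lets me add edges so that each of the $c+1$ outside vertices has exactly three consecutive neighbours $\{v_{p_i-1},v_{p_i},v_{p_i+1}\}$ with $2\le p_1\le\dots\le p_{c+1}\le d$, and so that the outside vertices induce a clique; adding edges only raises Laplacian eigenvalues, so proving $\mu_{2c+3}(G')<4$ for this maximal configuration suffices. The resulting $G'$ is a common generalisation of $G_{n,d,t}$ in which the clique vertices may attach at different positions $p_i$. Following the template of Theorem \ref{normal}, the aim is a decomposition
\[
L(G')=L\big(G_{d+c+2,\,d+c,\,t}\big)+R
\]
obtained by relabelling so that the $c+1$ outside vertices are \emph{unfolded} into a longer diametral path $P_{d+c+1}$, leaving a single attached vertex (so the base graph has clique $K_1$), while $R$ is permutational similar to the Laplacian of a small graph on at most $2c+1$ non-isolated vertices, whence $\rho_{2c+2}(R)=0$. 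Then Lemma \ref{cw} with the split $2c+3=2+(2c+2)-1$ and Lemma \ref{gndt} (which gives $\mu_2(G_{d+c+2,d+c,t})=4$) yield $\mu_{2c+3}(G')\le 4$, and the strict inequality is recovered from the equality clause of Lemma \ref{cw}: a common eigenvector $\mathbf{x}$ with $R\mathbf{x}=\mathbf{0}$ and $L(G_{d+c+2,d+c,t})\mathbf{x}=4\mathbf{x}$ is forced to vanish by propagating the pendant-path relation $x_i=(-1)^{i-1}(2i-1)x_1$ of Lemma \ref{cha} along $P_{d+c+1}$, exactly as in Cases 2 and 3 of Theorem \ref{normal}.

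The main obstacle is the construction and bookkeeping of $R$ for arbitrary $c$ and arbitrary position vectors $(p_1,\dots,p_{c+1})$. For $c=2$ one exhibits $R$ explicitly as $L(2P_2\cup C_4\cup\cdots)$ or $L(H\cup\cdots)$ after a tailored relabelling, and $\rho_6(R)=0$ is transparent; for general $c$ the number of qualitatively different configurations, according to how the windows $[p_i-1,p_i+1]$ coincide, overlap or separate, grows with $c$, and it is unclear that a single relabelling keeps $R$ a genuine, suitably small Laplacian with $\rho_{2c+2}(R)=0$ uniformly. A tempting shortcut is induction on $c$, deleting one outside vertex to pass from $c+1$ to $c$: in the ideal case interlacing would give $\mu_{2c+3}(G')\le\mu_{2c+2}(G'')\le\mu_{2c+1}(G'')<4$, closing the argument. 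It fails only because $L(G'')$ is not literally a principal submatrix of $L(G')$: deletion forces a degree-correction whose rank is comparable to $c$, and no Weyl split can simultaneously absorb that correction and keep the comparison eigenvalue below $4$. Finally, any complete proof must reconcile this spectral method, natural for small $c$, with the connectivity method of Theorem \ref{2d-1} (via the cut sets $V_i$, Corollary \ref{delta} and Lemma \ref{mun-1}) that governs the opposite end $c=d-2,d-3$; marrying the two regimes for intermediate $c$ is, I expect, where the real difficulty lies.
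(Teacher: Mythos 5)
The statement you are addressing is Conjecture \ref{you}, which the paper leaves open: it is established there only for $c=0,1,2$ (Theorems \ref{x1}, \ref{x2} and \ref{x}) and for $c=d-2,d-3$ (Theorems \ref{2d-2} and \ref{2d-1}). Your text is a plan rather than a proof, and you say as much yourself; the point of this note is to record concretely where the gap is. The reduction in your first paragraph is sound and is exactly the paper's device for small $c$: with $|V(G')|=d+c+2$, Lemmas \ref{interlacing} and \ref{cw} give $\mu_{n-d+c+1}(G)\le\mu_{2c+3}(G')+(n-d-c-2)$, so the conjecture would follow from $\mu_{2c+3}(G')<4$, and the normalisation via Lemma \ref{cauchy} (three consecutive neighbours per outside vertex, outside vertices forming a clique) is legitimate since adding edges cannot decrease Laplacian eigenvalues.

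The gap is that the single inequality $\mu_{2c+3}(G')<4$ is left unproven for $c\ge 3$, and everything hinges on it. Your proposed decomposition $L(G')=L(G_{d+c+2,\,d+c,\,t})+R$ with $\rho_{2c+2}(R)=0$ is only ever exhibited for $c=2$ (Cases 2.1 and 3 of Theorem \ref{normal}); for a general position vector $(p_1,\dots,p_{c+1})$ the unfolding of a $(c+1)$-clique into a path segment alters many adjacencies simultaneously, and you give no argument that the correction $R$ is positive semidefinite of rank at most $2c+1$, nor that the relabelled base graph really is a $G_{d+c+2,d+c,t}$ with an induced diametral path. Moreover, when several windows $[p_i-1,p_i+1]$ coincide, the normalised $G'$ contains a $G_{c+d+2,d,t}$, for which Lemma \ref{gndt} already forces $c+2$ eigenvalues at least $c+4$, so the claim $\mu_{2c+3}(G')<4$ is genuinely delicate and is not known to persist toward the other end of the range $c\approx d-2$, where the paper abandons the local spectral method entirely and argues globally through vertex connectivity, the cut sets $V_i$, Corollary \ref{delta} and Lemma \ref{mun-1}. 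You correctly identify both obstacles, but identifying them does not remove them: as written, the proposal proves nothing beyond the cases already covered by the theorems in the paper, and the conjecture remains open.
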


Note that in Conjecture \ref{you}, as the interval  $[\max\{2,c\}, n-2-c]$ becomes smaller, the bound for the number of Laplacian eigenvalues in $[n-d+2-c,n]$ becomes larger.
We may go further to prove Conjecture \ref{you} for $c=3,d-4$ with more detailed analysis. However, for the general $c$, it seems that some different technique is needed. Anyway, it is helpful to understand how the Laplacian eigenvalues are distributed and how this distribution   is related to the diameter.

\bigskip

\noindent {\bf Declaration of competing interest}\\

There is no competing interest.

\bigskip

\noindent {\bf Data availability}\\

No data was used for the research described in the article.

\bigskip
\noindent {\bf Acknowledgements}\\

The authors thank  Professors Domingos M. Cardoso, Vilmar Trevisan, Saieed Akbari, Elismar R. Oliveira and Stephen T. Hedetniemi for helpful comments and suggestions on an early version of this paper.
This work was supported by the National Natural Science Foundation of China (No.~12071158).

\end{document}